\documentclass[11pt]{amsproc}
\usepackage{amsmath,amsfonts,amssymb,amsthm}
\IfFileExists{amsrefs.sty}{%
  \usepackage[abbrev,lite,nobysame]{amsrefs}%
}{%
  \usepackage{cite}%
}
\usepackage{mathrsfs}
\usepackage{graphics,graphicx}
\usepackage[dvipsnames]{xcolor}
\usepackage[margin=1in]{geometry}
\usepackage{tikz}

\usepackage{enumitem}
\usepackage[colorlinks=true, pdfstartview=FitV, linkcolor=BrickRed,citecolor=black, urlcolor=black]{hyperref}

\usepackage[centercolon]{mathtools}
\makeatletter
\@ifpackageloaded{amsrefs}{}{}
\makeatother

\DeclareMathAlphabet{\mathup}{OT1}{\familydefault}{m}{n}
\newcommand{\dd}[1]{\mathop{}\!\mathup{d} #1}

\def\XXint#1#2#3{{\setbox0=\hbox{$#1{#2#3}{\int}$ }
\vcenter{\hbox{$#2#3$ }}\kern-.59\wd0}}

\renewcommand{\div}{\mathrm{div}\,}
\DeclareMathOperator{\curl}{curl}

\newcommand{\N}{\mathbb{N}}
\newcommand{\Z}{\mathbb{Z}}

\newcommand{\R}{\mathbb{R}}
\newcommand{\Co}{\mathbb{C}}
\newcommand{\T}{\mathbb{T}}

\DeclareMathOperator{\graph}{graph}
\newcommand{\Gr}{\mathrm{Gr}}

\newcommand{\xvec}{\boldsymbol{x}}
\newcommand{\yvec}{\boldsymbol{y}}
\newcommand{\zvec}{\boldsymbol{z}}
\newcommand{\vvec}{\boldsymbol{v}}

\newcommand{\cS}{\mathcal{S}}

\newcommand\e{{\rm e}}

\DeclareMathOperator{\Lip}{Lip}

\DeclareMathOperator*{\esssup}{ess\,sup}
\DeclareMathOperator*{\interior}{int\,}

\newcommand{\spann}{\mathrm{span}}

\newcommand{\eps}{\varepsilon}

\newcommand{\dist}{\text{\rm dist}}

\newtheorem{proposition}{Proposition}[section]
\newtheorem{theorem}{Theorem}[section]
\newtheorem{lemma}{Lemma}[section]
\newtheorem{corollary}{Corollary}[section]

\theoremstyle{definition}
\newtheorem{definition}{Definition}[section]

\theoremstyle{remark}
\newtheorem{remark}{Remark}[section]

\DeclareMathAlphabet{\mathup}{OT1}{\familydefault}{m}{n}

\numberwithin{equation}{section}

\makeatletter

\renewcommand\subsubsection{\@startsection{subsubsection}{3}%
\normalparindent{.5\linespacing\@plus.7\linespacing}{-.5em}
{\normalfont\bfseries}}

\def\@tocline#1#2#3#4#5#6#7{\relax
  \ifnum #1>\c@tocdepth 
  \else
    \par \addpenalty\@secpenalty\addvspace{#2}%
    \begingroup \hyphenpenalty\@M
    \@ifempty{#4}{%
      \@tempdima\csname r@tocindent\number#1\endcsname\relax
    }{%
      \@tempdima#4\relax
    }%
    \parindent\z@ \leftskip#3\relax \advance\leftskip\@tempdima\relax
    \rightskip\@pnumwidth plus4em \parfillskip-\@pnumwidth
    #5\leavevmode\hskip-\@tempdima
      \ifcase #1
       \or\or \hskip 1em \or \hskip 2em \else \hskip 3em \fi%
      #6\nobreak\relax
    \dotfill\hbox to\@pnumwidth{\@tocpagenum{#7}}\par
    \nobreak
    \endgroup
  \fi}
\makeatother

\begin{document}

\title[Exponential growth and decay in the ideal induction equation]{Exponential growth and decay in the ideal induction equation}

\author[V. Navarro-Fern\'andez]{V\'ictor Navarro-Fern\'andez}
\address{(VNF) Department of Mathematics, Imperial College London, London, UK}
\email{v.navarro-fernandez@imperial.ac.uk}

\date{\today}
\subjclass[2020]{35Q49, 37D20, 76E25}
\keywords{Ideal dynamo, magnetic relaxation, uniform hyperbolicity}

\begin{abstract}
We construct a divergence-free velocity field on the three-dimensional torus that is time-periodic and consists of three alternating piecewise-affine shears. For sufficiently large shear amplitude, every non-zero divergence-free initial field in $L^p$ grows exponentially under the ideal induction equation. The proof establishes a uniform cone condition for the time-one map, and combines it with a bunching inequality to rule out nontrivial divergence-free fields lying almost everywhere in the stable bundle. Additionally, we show that for the time-reversed velocity, whose time-one map is the inverse of the original one, there exist nontrivial, bounded, divergence-free initial configurations taking values almost everywhere in its two-dimensional stable bundle. The corresponding solution decays exponentially in every $L^p$.
\end{abstract}

\maketitle

\setcounter{tocdepth}{1}
\tableofcontents

\section{Introduction}\label{s:intro}

Consider the ideal induction or vector transport equation
\begin{equation}\label{eq:dynamo}
\partial_t B+(u\cdot\nabla)B=(B\cdot\nabla)u, \quad \div B=0, \quad B(0,\cdot)=B_0,
\end{equation}
on \(\xvec\in\T^3=\R^3/(2\pi\Z)^3\). Here $u:\R_+\times\T^3\to\R^3$ denotes a prescribed divergence-free velocity field that transports and stretches the vector field $B:\R_+\times\T^3\to\R^3$ passively. $B_0:\T^3\to\R^3$ denotes an initial configuration for the Cauchy problem, that is taken divergence-free. In terms of the Lie bracket of vector fields, $[u,B]=(u\cdot\nabla)B-(B\cdot\nabla)u$, equation \eqref{eq:dynamo} is the Lie-transport equation
\[
\partial_tB + [u,B]=0.
\]
As such, we can recover solutions to \eqref{eq:dynamo} via push-forward with the volume-preserving flow map $\phi_t:\T^3\to\T^3$, associated to the divergence-free velocity field $u$, that is defined by the ODE
\[
\frac{\dd}{\dd t}\phi_t(\xvec)=u(t,\phi_t(\xvec)), \quad \phi_0(\xvec)=\xvec.
\]
Indeed, provided that we have sufficient regularity, the push-forward of the initial datum by the flow, given by $B(t,\cdot)=(\phi_t)_\ast B_0$, solves \eqref{eq:dynamo}. Since $B_0$ is a vector field, this can be rewritten into the solution formula
\begin{equation}\label{eq:solutions}
B(t,\xvec) =D\phi_t(\phi_t^{-1}(\xvec)) B_0(\phi_t^{-1}(\xvec)),
\end{equation}
which preserves the divergence constraint $\div B(t,\cdot)=0$ for all $t>0$. 
The same identity implies Alfv\'en's frozen-flux theorem. If a regular, oriented surface $\Sigma_0\subset\T^3$ is transported by the flow map to $\Sigma_t=\phi_t(\Sigma_0)$, then
\[
\int_{\Sigma_t}B(t,\xvec)\cdot n_{\Sigma_t}(\xvec) \dd{\mathscr H^2(\xvec)} = \int_{\Sigma_0}B_0(\yvec)\cdot n_{\Sigma_0}(\yvec) \dd{\mathscr H^2(\yvec)},
\]
where $n_\Sigma$ denotes a unit vector that is everywhere normal to the surface $\Sigma$, and $\mathscr{H}^2$ the two-dimensional Hausdorff measure. The geometric formulation of the induction equation and its relation to Lie advection has been extensively discussed in the literature, see e.g.\ \cites{GilbertVanneste21,AK98}.

The induction equation \eqref{eq:dynamo} can be understood as a particular case of the magnetohydrodynamics (MHD) equations. These are derived from Maxwell's equations together with Ohm's law, and yield the following (induction) equations for a magnetic field $B^\eps$,
\begin{equation}\label{eq:fast-dynamo}
\partial_t B^\eps + (u\cdot\nabla)B^\eps - (B^\eps\cdot \nabla)u = \eps\Delta B^\eps, \quad \div B^\eps = 0,
\end{equation}
where $\eps\geq 0$ denotes the magnetic resistivity.
In MHD, \eqref{eq:fast-dynamo} is typically coupled with the Navier--Stokes equations for the velocity $u$ of a fluid through a Lorentz force. They describe the evolution of a magnetic field $B^\eps$ transported by a conducting fluid flow that moves with velocity $u$. We can decouple the MHD equations, e.g.\ by neglecting the contribution of the Lorentz force, and look only at the induction equation \eqref{eq:fast-dynamo}. Despite being a simplified scenario, equations \eqref{eq:fast-dynamo} can provide useful information about the behaviour of solutions to the fully non-linear MHD equations, see \cites{NFV26}. For further information about the MHD equations we refer to \cites{Davidson01,Moffatt1978}.

On this note we are first interested in solutions for which there is a rapid creation of magnetic energy, represented by (the square of) the $L^2(\T^3)$ norm of $B^\eps(t,\cdot)$. This is known as the \emph{dynamo problem}, and it has attracted the attention of applied mathematicians and theoretical physicists since very early in the 1900s, see Larmor's seminal work \cite{Larmor19}. Following Arnold's definition \cites{ArnoldsProblems,AK98}, we say that a velocity field $u$ is a dynamo if, for any fixed $\eps\geq 0$, there exists an initial configuration $B_0^\eps\in L^2(\T^3)$ with $\div B_0^\eps=0$, starting from which, the $\eps$-dependent solution to the linear problem \eqref{eq:fast-dynamo} satisfies
\[
\gamma_\eps := \liminf_{t\to\infty} \frac{1}{t}\log\|B^\eps(t,\cdot)\|_{L^2}>0.
\]
Because of their connection to the dynamo problem, equations \eqref{eq:dynamo} and \eqref{eq:fast-dynamo} are also referred to as the \emph{kinematic dynamo} equations. For further information about the dynamo problem we refer to the monographs \cites{ChildressGilbert,AK98} and the doctoral thesis \cite{david-thesis}.

Finding dynamos is generally a difficult task. On the one hand, energy estimates in \eqref{eq:fast-dynamo} yield 
\[
\|B^\eps(t,\cdot)\|_{L^2} \leq \|B_0^\eps\|_{L^2}\e^{\int_0^t\|\nabla u(s,\cdot)\|_{L^\infty}\dd s} \quad\text{for all }t\geq 0.
\]
If $\nabla u$ is uniformly bounded, exponential is the fastest growth rate we can hope for. On the other hand, there exists a collection of \emph{antidynamo theorems} that rule out some velocity fields with symmetric geometries from being dynamos. These results showcase the necessity of certain complexity for the flows if one hopes for exponential growth, see e.g.\ \cites{zeldovich1980magnetic,Cowling33,Vishik89}. 

Dynamos can be classified by how $\gamma_\eps>0$ depends on $\eps$. If $\gamma_\eps>0$ for all $\eps>0$ sufficiently small and $\liminf_{\eps\to 0}\gamma_\eps>0$, we have a \emph{fast dynamo}. Examples of fast dynamos in unbounded domains are provided in \cites{gilbert1988,CZSV25,ZRMS1984}. In $\T^3$ an example has only been found very recently with Lipschitz velocity fields \cite{CZVS26+}, making crucial progress to solve a long-standing problem proposed by Arnold \cite{ArnoldsProblems}. As the current paper was being finished, more progress towards the resolution of Arnold's conjecture was made as an example of a Lipschitz autonomous field in $\T^3$ has also been discovered in \cite{Niebel6+}. Other relevant examples of \emph{subsequential} fast dynamos, for which only $\limsup_{\eps\to 0}\gamma_\eps>0$, have been recently discovered in $\T^3$ \cites{Rowan25,SorellaVillringer25+}, and in general bounded domains with boundary \cite{DNFLM26+}. If $\gamma_\eps>0$ for all $\eps>0$ sufficiently small but $\gamma_\eps\to 0$ as $\eps\to 0$, we have a \emph{slow dynamo}. Examples of slow dynamos in different subdomains of $\R^3$, including bounded domains, are provided in \cite{NFVillringer}. Finally, in the zero-resistivity case for which $\eps=0$, we say that we have an \emph{ideal dynamo} if $\gamma_0>0$. 

The latter is the type of dynamo action we will be putting a focus on this paper, thus it suffices to study the ideal induction equation \eqref{eq:dynamo}, whose solution we denote $B$. At first glance, the solution formula \eqref{eq:solutions} already hints a way to find ideal dynamos. Indeed, take $D\phi_t$ with an exponentially expanding/contracting pair of eigenvalues, i.e.\ a hyperbolic point. As shown in Appendix \ref{s:hyperbolic-point}, one can find an initial datum $B_0$ concentrated around the hyperbolic point so that $\|B(t,\cdot)\|_{L^p}$ grows exponentially fast for any $p>1$. This mechanism however does not work if we want to obtain exponential growth for \emph{every} divergence-free initial configuration in $L^2$.

\begin{definition}\label{def:universal-dynamo}
We call $u$ a \emph{universal ideal dynamo} if for every non-zero, divergence-free initial configuration $B_0\in L^2(\T^3)$ there exist constants $\lambda,C>0$ such that the solution $B$ to the induction equation \eqref{eq:dynamo} satisfies
\[
\|B(t,\cdot)\|_{L^2}\geq C\e^{\lambda t} \quad\text{for all }t\geq 0.
\]
\end{definition}

An autonomous velocity $u$ cannot define a universal ideal dynamo. Indeed let $B_0=u$, then since $[u,u]=0$, it follows that $B(t,\cdot) = u$ for all $t\geq 0$. There are nonetheless two relevant approaches to the universal ideal dynamo problem in the literature, both constructed with stochastic flows. First, \cite{BaxendaleRozovskii93} provides an annealed example given by a vector field in $\R^d$ that is smooth-in-space and white-in-time, for which there is exponential growth in any $L^p$ norm after averaging over the random velocity. More recently, in \cite{CotiZelatiNavarroFernandez} an example of an almost-sure universal ideal dynamo is constructed in $\T^3$ using a randomized version of the ABC flows.

In \cite{CotiZelatiNavarroFernandez}, the universal dynamo result is obtained together with a result about exponential mixing for all initial passive scalars in $\T^3$. By mixing we refer to the convergence $\|\rho(t,\cdot)\|_{H^{-1}}\to 0$ as $t\to\infty$ for mean-free solutions to the scalar transport equation 
\begin{equation}\label{eq:transport}
\partial_t\rho + u\cdot\nabla\rho = 0.
\end{equation}
The connection between mixing for passive scalars and dynamo for passive vectors can be best understood by looking at the problem in $\T^2$. In two-dimensions, any mean-free divergence-free vector field can be written as $B=\nabla^\perp\rho$ for some scalar function $\rho$, where $\nabla^\perp = (-\partial_2,\partial_1)$. A direct computation shows that if $B=\nabla^\perp\rho$ solves the induction equation \eqref{eq:dynamo} starting from $B_0=\nabla^\perp\rho_0$, then $\rho$ solves the transport equation \eqref{eq:transport} starting from $\rho_0$. Assume that $u$ is a (universal) exponential mixer, namely there are $C,\lambda>0$ such that $\|\rho(t,\cdot)\|_{H^{-1}} \leq C \e^{-\lambda t}\|\rho_0\|_{H^1}$ for all initial data $\rho_0\in H^1(\T^2)$ and mean-free. Since \eqref{eq:transport} conserves the $L^2$ norm, via interpolation we can write
\[
\|\rho_0\|_{L^2}^2 =\|\rho(t,\cdot)\|_{L^2}^2 \leq \|\rho(t,\cdot)\|_{H^1}\|\rho(t,\cdot)\|_{H^{-1}} \leq C\e^{-\lambda t}\|\rho_0\|_{H^1}\|B(t,\cdot)\|_{L^2}.
\]
Equivalently, if $u$ is a universal mixer for passive scalars, then the solution for the passive vector equation \eqref{eq:dynamo} satisfies
\[
\|B(t,\cdot)\|_{L^2} \geq \frac{\e^{\lambda t}}{C}\frac{\|B_0\|_{H^{-1}}^2}{\|B_0\|_{L^2}},
\]
for all non-zero, mean-free, divergence-free initial data $B_0\in L^2(\T^2)$. This shows that all universal mixers in $\T^2$ are also universal ideal dynamos in the class of mean-free initial data. As such, there are plenty of examples available in the literature, either on a deterministic setting \cites{ELM25,ElgindiZlatos19,ACM19}, or with more regular but random velocity fields \cites{NFSeis26,BBPS22,BCZG23,CoopermanRowan26}.

This argument also serves to show that the problem in $\T^3$ is more challenging and interesting because \eqref{eq:dynamo} does not seem to have such a straightforward relation with the equation for the passive scalar \eqref{eq:transport}. To the best of the author's knowledge, the relation between mixing for passive scalars and dynamo in three-dimensions remains to be fully understood.

\subsection{The velocity field and main results}

In this paper we construct a velocity field in $\T^3$ that is a universal ideal dynamo in the sense of Definition \ref{def:universal-dynamo}.
We also prove a complementary result for the time-reversed velocity: it admits a non-zero bounded divergence-free initial field whose solution decays exponentially in every \(L^p\).
Let \(h:\T\to[0,\pi]\) be the periodic tent function defined, using the representative in \([0,2\pi)\), by
\[
h(s)=|s-\pi|.
\]
We extend it periodically modulo $2\pi$ as per usual in the torus. Trivially, $h(s)$ is Lipschitz and smooth away from \(\{0,\pi\}\). For \(t\in\R\), put \(\tau=t-\lfloor t\rfloor\in[0,1)\), and define the velocity field
\begin{equation}\label{eq:velocity}
u_\alpha(t,x,y,z)
=3\alpha
\begin{cases}
(h(y),0,0), & 0\leq\tau<1/3,\\
(0,h(z),0), & 1/3\leq\tau<2/3,\\
(0,0,h(x)), & 2/3\leq\tau<1,
\end{cases}
\end{equation}
that depends on a parameter $\alpha\in\R$ to be specified later. Here we write $\xvec=(x,y,z)\in\T^3$. This vector field is a composition of shears, one-periodic in time, uniformly Lipschitz in space, and divergence-free. The factor \(3\) is introduced to compensate for the duration \(1/3\) of each pulse.

With this velocity field we can already state the first main result. We recall that throughout the paper, $B$ denotes the Lagrangian solution \eqref{eq:solutions} to the induction equation \eqref{eq:dynamo}, defined almost everywhere in $\T^3$, and that is the unique $L^p$ distributional solution.

\begin{theorem}\label{thm}
Let $1\leq p\leq\infty$, and let $B$ be the solution to the induction equation \eqref{eq:dynamo} starting from a non-zero divergence-free field $B_0\in L^p(\T^3)$ with the prescribed velocity field $u_\alpha$ from \eqref{eq:velocity}. There exists $\alpha_0>0$ such that for all $\alpha\geq \alpha_0$ the following claim holds true: 

There is a constant $\lambda_1>0$ (independent of $B_0$ and $p$) and a constant $\delta_p(B_0)>0$ such that
\[
\|B(t,\cdot)\|_{L^p} \geq \delta_p(B_0) \e^{\lambda_1 t} \quad \text{for all } t\geq 0.
\]
In particular, $u_\alpha$ is a universal ideal dynamo for any $\alpha\geq\alpha_0$.
\end{theorem}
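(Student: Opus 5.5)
The time-one map is $\Phi=\phi_1=S_3\circ S_2\circ S_1$, where $S_1(x,y,z)=(x+\alpha h(y),y,z)$, $S_2(x,y,z)=(x,y+\alpha h(z),z)$ and $S_3(x,y,z)=(x,y,z+\alpha h(x))$. Since $h'=\pm1$ almost everywhere, $D\Phi$ is almost everywhere a product of three elementary matrices $I+\alpha\sigma_i E_{ij}$ with signs $\sigma_i\in\{\pm1\}$. The plan is to prove a uniform cone condition for these products, then show that a divergence-free field cannot live in the complementary stable directions, and finally turn pointwise growth into $L^p$ growth.

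\textbf{Step 1 (cone field).} I would fix a constant cone such as
\[
\mathcal C=\{v:\ |v_{\mathrm{dom}}|\geq \theta\,|v|\},
\]
with $\theta$ and the dominant direction read off from the structure of the sign-dependent products. The goal is to show that for $\alpha\geq\alpha_0$ and every choice of signs:
\begin{itemize}
\item[(a)] $D\Phi\,\mathcal C\subset\mathcal C$;
\item[(b)] $|D\Phi v|\geq \e^{\lambda_1}|v|$ for $v\in\mathcal C$.
\end{itemize}
This gives $|D\Phi^n v|\ge \e^{n\lambda_1}|v|$ on $\mathcal C$. The entries of $D\Phi$ are polynomials in $\alpha$ whose leading terms have modulus $\alpha^2$ or $\alpha^3$, which should make the invariance and expansion checks straightforward. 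By volume preservation and the cone, the complementary family $\mathcal C^\ast$ is backward invariant. Intersecting $D\Phi^{-n}\mathcal C^\ast$ over $n$ produces a measurable stable bundle $E^s$, defined almost everywhere (possibly of dimension two). Vectors outside $E^s$ eventually enter $\mathcal C$ and grow uniformly.

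\textbf{Step 2 (rigidity).} I would show that $B_0\ne0$ divergence-free cannot satisfy $B_0(\xvec)\in E^s(\xvec)$ almost everywhere. This is the step I expect to be the main obstacle. My plan is to combine a bunching estimate with the frozen-flux/divergence structure. If $B_0\in E^s$ almost everywhere, then $B_n=\Phi^n_\ast B_0$ decays pointwise in the stable directions. Meanwhile the contraction rates inside $E^s$ (and the expansion rate) satisfy a bunching inequality, which should make $E^s$ regular enough, plausibly Lipschitz or integrable along invariant leaves.

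I would then test $\div B_n=0$ against a suitable function, for instance a coordinate-type function adapted to the unstable direction, using a Fourier mode $\e^{ik\cdot \xvec}$ with $k$ chosen so that $k\cdot v$ does not vanish on $\mathcal C$. The aim is to obtain a conserved quantity, such as a flux through transported surfaces, that stays bounded below, while $\|B_n\|_{L^1}\to0$. That is a contradiction.

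If this direct route fails, I would fall back on showing that $E^s$ is transverse to some fixed coordinate plane. A field in $E^s$ has a flux through that plane which is invariant by Alfv\'en's theorem, yet is forced to zero by the decay.

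\textbf{Step 3 (quantitative $L^p$ growth).} Set $A=\{\xvec: B_0(\xvec)\notin E^s(\xvec)\}$, which has positive measure by Step 2. For each $\xvec\in A$ there is $N(\xvec)$ with $D\Phi^{N}B_0\in\mathcal C$. Choose $N$ and a set $A_N\subset A$ of positive measure on which this holds and on which $|D\Phi^N B_0|\geq c>0$. Volume preservation of $\Phi$ and the solution formula \eqref{eq:solutions} then give
\[
\|B(n,\cdot)\|_{L^p}^p\ge \int_{A_N}|D\Phi^{n}B_0|^p\dd{\xvec}\ge c^p\,|A_N|\,\e^{p\lambda_1(n-N)} .
\]
Non-integer times are handled by the bounded factor $\e^{\pm 3\alpha}$ on $[n,n+1]$. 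This yields $\delta_p(B_0)$, with $\lambda_1$ independent of $B_0$ and $p$. The case $p=\infty$ is the same argument with the $L^\infty$ norm on $A_N$.
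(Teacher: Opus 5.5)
Your Step 2 has a genuine gap: none of the mechanisms you propose can exclude a \emph{mean-free} divergence-free field with values in $E^s$.

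\begin{enumerate}
\item Testing $\div B_n=0$ against $\e^{ik\cdot\xvec}$ only yields the constraint $k\cdot\widehat{B_n}(k)=0$. In general the only Fourier coefficient the equation conserves is the mean.
\item Likewise, the flux of a divergence-free field through a fixed coordinate torus $\{x=x_0\}$ is independent of $x_0$ and equals $(2\pi)^{-1}\int_{\T^3}B_1\dd\xvec$. So both the Fourier idea and your fallback only show that $B_0$ has zero mean, which the paper uses merely as a preliminary reduction.
\item Flux through transported surfaces $T^n(\Sigma_0)$ is conserved but gives no smallness. Apart from $L^p$ fields having no traces, the area element of $T^n(\Sigma_0)$ is multiplied by $|(DT^n)^{-\top}n_0|$, with $n_0$ the normal of $\Sigma_0$. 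The identity $DT^nv\cdot(DT^n)^{-\top}n_0=v\cdot n_0$ shows that this growth exactly compensates the decay of $B_n$.
\item Nor does bunching make $E^s$ regular here. The bundle depends on the forward branch itinerary and is discontinuous across the preimages of the singular set.
\end{enumerate}
A useful sanity check: any correct Step 2 must fail for $T^{-1}$, whose stable bundle $E^u$ does contain a non-zero bounded divergence-free field (Theorem~\ref{thm:reversed-flow-decay}). Nothing in your sketch distinguishes the two cases.

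The missing idea is a duality argument that gains a power of $\alpha$.
\begin{enumerate}
\item Pair $B_0$ with a smooth covector field transported covariantly, $\psi_n=((DT^n)^{-\top}\psi_0)\circ T^{-n}$, so that $\int B_n\cdot\psi_n\dd\xvec=\int B_0\cdot\psi_0\dd\xvec$. Bounding this directly is useless, since $\psi_n$ grows like $\|(DT^n)^{-1}\|\geq\Lambda_s^n$.
\item Instead write $B_n=\curl A_n$ with the Coulomb potential $A_n=\curl(-\Delta)^{-1}B_n$; this is where mean-freeness is used. Then $\|A_n\|_{L^p}\lesssim\|B_n\|_{L^p}\leq\Lambda_s^{-n}\|B_0\|_{L^p}$.
\item Integrate by parts: $\int B_0\cdot\psi_0\dd\xvec=\int A_n\cdot\curl\psi_n\dd\xvec$.
\item The curl of a covariant Piola transform is transported as a vector, $(\curl\psi_n)\circ T^n=DT^n\curl\psi_0$. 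Hence $\|\curl\psi_n\|_{L^q}\leq\Lip(T)^n\|\curl\psi_0\|_{L^q}$, and the pairing is $O((\Lip(T)/\Lambda_s)^n)\to0$ because $\Lip(T)\asymp\alpha^2$ while $\Lambda_s\asymp\alpha^3$.
\end{enumerate}
Your transported-flux idea is a singular version of this: by Stokes the flux equals the circulation of $A_n$ along $T^n(\partial\Sigma_0)$, whose length grows only like $\Lip(T)^n$. This bunching inequality is exactly what fails for $T^{-1}$, where $\Lip(T^{-1})\asymp\alpha^3$ and $\Lambda_u\asymp\alpha$.

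It also dictates the geometry of Step 1, which as phrased is wrong. A double cone around a single dominant direction cannot be invariant. For $abc<0$ the expanding eigenvalues of $A_{a,b,c}$ are complex conjugates with non-real squares. Invariance of a double cone around a line would make each half invariant under $A_{a,b,c}^2$, and the Perron--Frobenius theorem for cones would then force $\rho(A_{a,b,c}^2)$ to be an eigenvalue, which it is not. The unstable cone must surround the plane $e_1^\perp$. This gives a two-dimensional $E^u$ and a one-dimensional $E^s$ with contraction $\asymp\alpha^{-3}$, and it is this strong one-dimensional contraction that beats $\Lip(T)$. Once Step 2 is in place, your Step 3 is an acceptable alternative to the paper's bound through $\|P^uB_0\|_{L^p}$.
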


\begin{remark}
From the analysis in the subsequent sections we get quantitative estimates for the constants $\lambda_1$ and $\delta_p(B_0)$. In particular from Proposition \ref{prop:conditional-growth}, Remark \ref{rmk:alpha-dep} and Corollary \ref{cor:iterated} we see that $\lambda_1=O(\log\alpha)$, whereas $\delta_p(B_0)$ is of the form
\[
\delta_p(B_0) = C_\alpha\|P^uB_0\|_{L^p},
\]
with $C_\alpha$ scaling like $\alpha^{-4}$. $P^u$ denotes the projection onto the unstable direction of a hyperbolic splitting given by the velocity field \eqref{eq:velocity}, see Section \ref{s:hyperbolic-splitting}. In Section \ref{s:L2-kernel} we show that $\|P^uB_0\|_{L^p}>0$ for all non-zero, divergence-free fields with $B_0\in L^p(\T^3)$, but we also show that $B_0$ can be chosen with $\|B_0\|_{L^p}=1$ and so that $\|P^uB_0\|_{L^p}$ is arbitrarily close to zero.
\end{remark}

The mechanism behind the universal growth result is strongly asymmetric under time reversal. For the forward time-one flow map \(\phi_{t=1}\), we prove in Section \ref{s:hyperbolic-splitting} that the stable bundle is one-dimensional, and Section \ref{s:L2-kernel} shows that it contains no non-zero divergence-free \(L^p\) field.  For \(\phi_{t=1}^{-1}\) the stable bundle is the original two-dimensional unstable bundle.  We prove that this bundle does contain a non-zero bounded divergence-free field, for which we obtain exponential decay of the magnetic energy. We call this construction ``time reversal" because for integer times there holds $\phi_n^{-1} = \phi_{-n}$, but this is not necessarily true for all real times.

Define the reversed velocity on one period by
\begin{equation}\label{eq:reversed-velocity}
\widetilde u_\alpha(t,\xvec) = -u_\alpha(1-t,\xvec), \quad \text{for } 0\leq t<1,
\end{equation}
and extend it one-periodically. For this velocity field we have the following result.

\begin{theorem}\label{thm:reversed-flow-decay}
Let \(\widetilde u_\alpha\) be the velocity field defined in \eqref{eq:reversed-velocity}. There exists $\alpha_0>0$ such that for any $\alpha\geq \alpha_0$ the following claim holds true:

There is a non-zero, divergence-free and mean-free initial datum $B_0\in L^\infty(\T^3)$ and constants $\lambda_2,C>0$, such that for any $1\leq p\leq \infty$, the solution $B$ to the induction equation \eqref{eq:dynamo} starting from $B_0$ and with velocity $\widetilde u_\alpha$, satisfies
\[
\|B(t,\cdot)\|_{L^p} \leq C\e^{-\lambda_2 t}\|B_0\|_{L^p} \quad \text{for all } t\geq 0.
\]
\end{theorem}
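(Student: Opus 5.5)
The plan has two parts. First, a decay estimate for any bounded field taking values in the right bundle. Second, the construction of one such divergence-free field, which is where the real difficulty lies.

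\emph{Setup and reduction.} Write $F=\phi_{1}$ for the forward time-one map of $u_\alpha$. By \eqref{eq:velocity} it is the composition of three volume-preserving piecewise-affine shears,
\[
F=S_z\circ S_y\circ S_x,\qquad S_x(x,y,z)=(x+\alpha h(y),y,z),
\]
and analogously for $S_y,S_z$. By \eqref{eq:reversed-velocity}, the time-one map of $\widetilde u_\alpha$ is $G=F^{-1}$. By the results of Section \ref{s:hyperbolic-splitting}, for $\alpha\geq\alpha_0$ there is a measurable splitting $E^s\oplus E^u$, with $\dim E^u=2$, and constants $C_0$ and $\mu>1$ (with $\mu$ comparable to a power of $\alpha$) such that $|DG^n(\xvec)v|\leq C_0\mu^{-n}|v|$ for all $v\in E^u(\xvec)$. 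This uses the invariance $DG\,E^u=E^u\circ G$.

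\emph{Decay from the bundle.} Suppose $B_0\in L^\infty$ is divergence-free and satisfies $B_0(\xvec)\in E^u(\xvec)$ a.e. The solution formula \eqref{eq:solutions} with the flow of $\widetilde u_\alpha$ gives
\[
|B(n,\xvec)|=|DG^n(G^{-n}\xvec)B_0(G^{-n}\xvec)|\leq C_0\mu^{-n}|B_0(G^{-n}\xvec)|.
\]
Since $G$ preserves volume, this yields $\|B(n)\|_{L^p}\leq C_0\mu^{-n}\|B_0\|_{L^p}$ for every $p\in[1,\infty]$. For a non-integer time $t=n+\tau$ I compose with the flow on $[0,\tau]$, whose Jacobian is bounded by $\e^{3|\alpha|}$ because $\|\nabla \widetilde u_\alpha\|_{L^\infty}\leq 3|\alpha|$. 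This gives the theorem with $\lambda_2=\log\mu$ and $C=C_0\mu\,\e^{3|\alpha|}$. So everything reduces to constructing a non-zero, bounded, divergence-free, mean-free $B_0$ with values in $E^u$.

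\emph{Construction of $B_0$.} My approach is to use $B_0=\nabla f\times\nabla g=\curl(g\nabla f)$. Any such field is automatically divergence-free. If $g$ is compactly supported in a ball, it is also mean-free and supported there. Moreover it is tangent to the level sets of $f$. So it suffices to find a ball $U\subset\T^3$ and a Lipschitz function $f$ on $U$ with $|\nabla f|\geq c>0$ whose level sets are tangent to $E^u$, that is, $E^u=\ker \dd f$ a.e. in $U$. Taking $g\in C_c^\infty(U)$ with $\nabla g$ not parallel to $\nabla f$ on a set of positive measure then gives a bounded, non-zero admissible $B_0$.

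\emph{Main obstacle: integrability of $E^u$.} The key step, which I expect to be hardest, is showing that the two-dimensional unstable bundle is integrable, with a local Lipschitz defining function. I would exploit the piecewise-affine structure. On each of the finitely many polyhedral cells where $F^{-n}$ is affine, $E^u$ is obtained as the limit of $DF^n$-images of the unstable cone from Section \ref{s:hyperbolic-splitting}. Each such image is a constant plane on a cell of $F^n$. Hence $E^u$ is a limit of piecewise-constant plane fields, and these are trivially integrable cell by cell, by affine planes.

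I would then show that on a small ball $U$ these planar foliations fit together. Concretely, the cells of $F^n$ are separated by preimages of the break planes $\{y\in\{0,\pi\}\}$ and so on, and these should be transverse to the stable direction. This should make the leaves Lipschitz graphs over a fixed two-plane, with Lipschitz constant controlled by the cone width. Passing to the limit via Arzel\`a--Ascoli, I would obtain a Lipschitz foliation of $U$ by surfaces tangent to $E^u$. Its leaf-labelling function $f$, for instance the height of the leaf over a fixed transversal line, is Lipschitz with $|\nabla f|$ bounded below.

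If global transversality fails, my fallback is to choose $U$ inside a single cell of a high iterate on which $E^u$ is close to constant. There I would run the same graph-transform argument only locally, which is all the construction needs.
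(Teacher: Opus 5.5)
Your decay step is correct and is the paper's. Once you have a bounded, divergence-free, mean-free $B_0$ with $B_0(\xvec)\in E^u(\xvec)$ a.e., three ingredients give the theorem: the bound $|DT^{-n}(\xvec)v|\le\Lambda_u^{-n}|v|$ on $E^u$, volume preservation, and a one-period bound on $D\widetilde\phi_\tau$. The ansatz $B_0=\nabla f\times\nabla g$ is also legitimate.

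\textbf{The gap is the existence of $f$.} A Lipschitz $f$ on a ball $U$ with $|\nabla f|\ge c$ and $\nabla f\perp E^u$ a.e.\ amounts to integrability of $E^u$ on an open set with a regular transverse structure. Your mechanism for obtaining it fails: the leaves are not Lipschitz graphs over a fixed plane, because $T$ \emph{folds} unstable surfaces along the creases $T^k(\cS)$.

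For example, $T(\{x=c\})=\{(c+\alpha h(y),\,y+\alpha h(z),\,z+\alpha h(c+\alpha h(y)))\}$. The projection $(y,z)\mapsto(Y,Z)$ has Jacobian $1-\alpha^3h'(y)h'(z)h'(X)$, whose sign changes as $y$ crosses $\pi$. So this surface folds back over the $(y,z)$-plane along $T(\{x=c,\,y=\pi\})$, even though its tangent planes lie in $C_u$.

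The same happens for images of genuine flat unstable discs crossing $\{y=\pi\}$. Write $A_\pm=A_{\pm\alpha,b,c}$ and take $w\in E^u$ with $w_2=1$, $w_3=0$. The two half-planes leave the crease in directions $A_+w$ and $-A_-w$. Both have cross product $\alpha bc+O(\alpha)$ with the projected crease direction, so both project to the same side of the crease.

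Consequences:
\begin{itemize}
\item A stable transversal can cut a leaf twice, so ``height over a transversal line'' does not label leaves.
\item There are no uniformly Lipschitz graphs for Arzel\`a--Ascoli to act on.
\item Transversality of the creases to $e_1$ does not prevent any of this.
\end{itemize}

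\textbf{The fallback fails too.} Every ball $U$ meets some $T^k(\cS)$ with $k\ge1$. Otherwise every $T^{-k}$ would be affine on $U$, and it would stretch an $e_1$-segment of $U$ into a straight $C_s$-segment of length $\gtrsim(c\alpha^3)^k$, which must eventually cross $\cS$. Moreover, $E^u$ is discontinuous across these surfaces. Across $T(\{y=\pi\})$, for instance, the two sides carry $A_\pm E$, and $A_-^{-1}A_+=I+2\alpha e_1e_2^\top$ fixes no plane in $C_u$. So $E^u$ is constant on no ball, and ``close to constant'' gives no integrability. It is not even clear that an $f$ with $|\nabla f|\ge c$ exists.

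\textbf{The missing idea is that you never need $\nabla f\ne0$ on an open set.} The paper proceeds as follows.
\begin{itemize}
\item Borel--Cantelli, the tubular-neighbourhood bound for $\Gamma$, and the piecewise-affine structure give \emph{exactly flat} unstable discs of radius $r(\xvec)>0$ a.e. Along each disc the past itinerary, and hence $E^u$, is constant.
\item It picks a compact set $K$ of positive length such that the points $(\tau,y_0,z_0)$, $\tau\in K$, all have $r\ge r_0$.
\item These discs are pairwise disjoint: a common point would force equal planes, and then $e_1\in E^u$.
\item Disjointness makes the slope $g$ Lipschitz on $K$.
\item A two-dimensional stream function $\psi(\boldsymbol s)$ on each disc, multiplied by $\mathbf 1_K(\tau)/J$, gives $B_0$. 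Only derivatives tangent to the discs enter the divergence computation.
\end{itemize}
Formally, this is your ansatz (up to sign) with $g=\psi(\boldsymbol s)$ and $f=F(\tau)$, $F'=\mathbf 1_K$. Its gradient vanishes off the positive-measure union of good discs, so no integrability across the creases is ever needed.
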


\begin{remark}
As it will become apparent in the proof, using the estimates from Corollary \ref{cor:iterated}, it follows that the coefficient $\lambda_2=O(\log\alpha)$, whereas $C=O(\alpha^4)$ as $\alpha$ grows larger.
\end{remark}

Decay under volume-preserving (Lie) transport for passive vectors has also been a relevant question in the context of the dynamo problem. Indeed, it is a conjecture by Arnold, see \cite{ArnoldsProblems}*{Problem 1991-1}, to find a volume-preserving diffeomorphism that makes the magnetic energy decrease to arbitrarily small values. This question is also known as the Sakharov--Zeldovich minimisation problem, for further information we refer to \cite{AK98}*{Chapter III.3}. The problem was originally proposed in a three-dimensional ball, coming from the physical interest in understanding stars. The question was addressed by Freedman \cite{Freedman99}, who constructed a family of volume-preserving diffeomorphisms \(\varphi_t\) of the three-dimensional ball for which the push-forward of a rotation field $\xi$ satisfies
\[
\|(\varphi_t)_*\xi\|_{L^2}^2=O(t^{-1}),
\]
while its \(L^\infty\) norm is unbounded. Freedman's model however cannot be realised as the iteration of one fixed autonomous or periodic flow. Exponential rates of decay are only known for hyperbolic constructions like the suspension of Arnold's cat-map, discussed as well in \cite{AK98}*{Chapters II.5 and V.4}. In this regard, Theorem \ref{thm:reversed-flow-decay} gives, to the author's knowledge, the first example of a periodic volume-preserving flow that makes the magnetic energy decrease exponentially fast in $\T^3$. Exponential rates of decay are optimal for velocity fields uniformly bounded in $W^{1,\infty}(\T^3)$, following the energy estimate for solutions to the ideal induction equation \eqref{eq:dynamo},
\[
\|B(t,\cdot)\|_{L^2} \geq \|B_0\|_{L^2}\e^{-\int_0^t \|\nabla u(s,\cdot)\|_{L^\infty}\dd s} \quad \text{for all }t\geq 0.
\]

The idea of considering this particular velocity $u_\alpha$ comes from two previous constructions.
A two-dimensional analogue of \eqref{eq:velocity} was first introduced by Elgindi, Liss and Mattingly in \cite{ELM25}, 
\[
u^{\mathrm{ELM}}_\alpha(t,x,y)
=2\alpha
\begin{cases}
(h(y),0), & 0\leq\tau<1/2,\\
(0,h(x)), & 1/2\leq\tau<1.
\end{cases}
\]
They prove that, for sufficiently large values of $\alpha$, this vector field mixes exponentially fast any mean-free passive scalar in $C^1$. As discussed before, in two-dimensions any universal mixer is a universal ideal dynamo, at least in the class of mean-free initial data. Although this argument does not carry over to $\T^3$, it gives a reasonable candidate for a universal ideal dynamo in \eqref{eq:velocity}. More recently, Liss and Mattingly in \cite{LissMattingly26+} used the same velocity field to prove a cumulative form of Batchelor's law for a passive scalar with deterministic forcing.  

In parallel, Coti Zelati, Sorella and Villringer extended $u_\alpha^{\mathrm{ELM}}$ to the three-torus in \cite{CZVS26+} by adding a third shear of the form $(0,0,g(x,y))$. Then they found a solution to \eqref{eq:fast-dynamo} that satisfies
\[
\|B(t,\cdot)\|_{L^2} \geq \e^{\gamma t}\|B_0\|_{L^2} \quad \text{for all } t\geq 0,
\]
where the constant $\gamma>0$ becomes independent of the magnetic resistivity $\eps>0$ as $\eps\to 0$, i.e.\ a fast dynamo. This is, to the author's knowledge, the first rigorous example of a Lipschitz fast dynamo in $\T^3$ available in the literature.

The choice of \eqref{eq:velocity} instead of the velocity constructed in \cite{CZVS26+} follows from the fact that we want to avoid having privileged directions. Indeed, we need a field that has some \emph{uniform hyperbolicity} property. Heuristically, this idea comes from the arguments in Appendix \ref{s:hyperbolic-point}: if $u$ has a hyperbolic point then it is a (non-universal) ideal dynamo. If we want universality, we need to make sure that all vectors end up seeing the expanding direction. 

We also remark that it remains open whether the velocity field \eqref{eq:velocity} is also an exponential mixer for passive scalars on \(\T^3\). Proving this would extend \cite{ELM25} to three dimensions. A major obstacle is that the Demers--Liverani theory of anisotropic Banach spaces \cite{DemersLiverani08}, which underlies the two-dimensional argument, does not directly apply in higher dimensions.

\emph{Outline:} The proof of Theorem \ref{thm} is divided in two main parts. First, we show that the velocity field \eqref{eq:velocity} is uniformly hyperbolic almost everywhere. We do so by proving that it satisfies a uniform cone condition in Section \ref{s:vectorfield}. Then, in Section \ref{s:hyperbolic-splitting} we translate the condition into a uniform measurable hyperbolic splitting of the tangent space almost everywhere. This already yields exponential growth for all initial data that are not fully supported on the stable directions of the hyperbolic splitting, as shown in Section \ref{s:conditional}. In Section \ref{s:L2-kernel} we rule out this possibility for divergence-free $L^p$ vector fields. The key ingredient for this last step is a bunching inequality that ensures that the Lipschitz norm of the flow map associated to \eqref{eq:velocity} is smaller than the strength of contraction along the stable directions of the hyperbolic splitting. In Section \ref{s:non-uniformity}, we show that although no divergence-free $L^p$ field is supported in the stable bundle, we can indeed construct vector fields with an arbitrarily small unstable component. Last, Section \ref{s:reversed-flow} is devoted to the proof of Theorem~\ref{thm:reversed-flow-decay}.  We construct flat local unstable discs for the piecewise-affine time-one map and use a two-dimensional stream-function construction on a positive-measure
family of discs to produce a non-zero bounded divergence-free field
tangent to the corresponding stable subspace.

\emph{Notation:} Along this note we write the non-subscripted norm $\|\cdot\|$ to denote the operator norm,
\[
\|L\| = \sup_{|v|=1}|Lv|,
\]
for either matrices and linear operators. If $A\subseteq \R^d$, we write $|A|$ for its $d$-dimensional Lebesgue measure, although if the context induces confusion we write $\mathscr L^d(A)$. We also write $a\lesssim b$ if there exists a constant $C>0$, independent of all the relevant quantities, such that $a\leq Cb$. Moreover we use the notation $a\asymp b$ if $b\lesssim a\lesssim b$.

\section{Uniform cone condition}\label{s:vectorfield}

In this section we study the hyperbolic properties of the velocity field \eqref{eq:velocity}. Recall that we define \(h:\T\to[0,\pi]\) to be the periodic tent function defined by $h(s)=|s-\pi|$, that is extended periodically modulo $2\pi$ in the torus. If $\phi_t$ denotes the flow map associated to the velocity field $u_\alpha$, the time-one map is given by the map $T=\phi_{t=1}$, defined by the composition
\[
T=T_3\circ T_2\circ T_1,
\]
where $T_1$, $T_2$ and $T_3$ are the following shears 
\begin{equation}\label{eq:shears}
\begin{split}
T_1(x,y,z)&=(x+\alpha h(y),y,z),\\
T_2(x,y,z)&=(x,y+\alpha h(z),z),\\
T_3(x,y,z)&=(x,y,z+\alpha h(x)).
\end{split}
\end{equation}
Space positions are always understood modulo \(2\pi\), and as before we write $\xvec=(x,y,z)\in\T^3$. Each shear is a volume-preserving bi-Lipschitz homeomorphism, hence so is \(T\).

The first property that we need this time-one flow map to have in order to prove Theorem \ref{thm} is a so-called uniform cone condition, that is defined as follows. 

\begin{definition}\label{def:cone}
Let \(T:\T^3\to\T^3\) be a bi-Lipschitz, piecewise \(C^1\) map with negligible singular set \(\cS\).  We say that \(T\) satisfies the \emph{uniform cone condition} if there are closed cones \(C_u,C_s\subset\R^3\) and constants \(C>0\), \(\gamma>1\) such that:
\begin{enumerate}
\item \(C_u=-C_u\), \(C_s=-C_s\), \(C_u\cap C_s=\{0\}\), and both cones have positive three-dimensional Lebesgue measure.  Moreover, there exist linear subspaces \(L_u\subset C_u\), \(L_s\subset C_s\) with
\[
\dim L_u+\dim L_s= \dim\R^3=3.
\]
\item For every \(\xvec\notin\cS\),
\[
DT(\xvec)(C_u\setminus\{0\})\subset\interior C_u, \quad [DT(\xvec)]^{-1}(C_s\setminus\{0\})\subset\interior C_s.
\]
\item Let $\mathcal{R}$ denote the regular set of $T$, i.e.\ the complement of the orbit of the singular set $\cS$,
\[
\mathcal R = \T^3\setminus\bigcup_{m\in\Z}T^m(\cS),
\]
which is a full measure set. Then for every \(\xvec\in\mathcal R\), \(n\geq1\), there holds
\[
|DT^n(\xvec)\vvec|\geq C\gamma^n|\vvec| \quad \forall\vvec\in C_u,
\]
\[
|DT^{-n}(\xvec)\vvec|\geq C\gamma^n|\vvec| \quad \forall\vvec\in C_s.
\]
\end{enumerate}
\end{definition}

This section is devoted to proving that the map \eqref{eq:shears} satisfies the uniform cone condition in $\T^3$, and this is precisely the claim of the following proposition.

\begin{proposition}\label{prop:uniform-cone}
There exists $\alpha_0>0$ such that for any $\alpha\geq\alpha_0$, the time-one map $T$ given by the composition of piecewise affine shears \eqref{eq:shears} satisfies the uniform cone condition from Definition \ref{def:cone} in $\T^3$, with $\dim L_u=2$ and $\dim L_s=1$.
\end{proposition}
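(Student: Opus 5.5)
The plan is to use that $T$ is piecewise affine. Away from the singular set $\cS$, which is the union of the preimages of the planes $\{y\in\{0,\pi\}\}$, $\{z\in\{0,\pi\}\}$ and $\{x\in\{0,\pi\}\}$ under the successive shears, the derivative $DT(\xvec)$ is one of finitely many constant matrices. Indeed, writing $\sigma_i=\pm1$ for the value of $h'$ at the relevant coordinate, $DT=D_3D_2D_1$ with $D_1=I+\alpha\sigma_1e_1e_2^T$, $D_2=I+\alpha\sigma_2e_2e_3^T$ and $D_3=I+\alpha\sigma_3e_3e_1^T$. For $\vvec=(v_1,v_2,v_3)$ this gives $w=DT\vvec$ with
\[
w_1=v_1+\alpha\sigma_1v_2,\quad w_2=v_2+\alpha\sigma_2v_3,\quad w_3=v_3+\alpha\sigma_3w_1 .
\]
So the cone condition reduces to a finite family of $3\times3$ matrix estimates that must hold uniformly in the sign pattern $(\sigma_1,\sigma_2,\sigma_3)\in\{\pm1\}^3$. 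A heuristic computation of the characteristic polynomial, $\lambda^3-3\lambda^2+(3-\alpha^3\sigma_1\sigma_2\sigma_3)\lambda-1$, suggests two eigenvalues of size $\alpha^{3/2}$ (real or complex depending on the sign) and one of size $\alpha^{-3}$. This is consistent with $\dim L_u=2$ and $\dim L_s=1$.

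The concrete choices are $L_u=\spann\{e_2,e_3\}$ and $L_s=\spann\{e_1\}$, with cones
\[
C_u=\{|v_1|\le \max(|v_2|,|v_3|)\},\qquad C_s=\{\max(|v_2|,|v_3|)\le \kappa|v_1|\},
\]
where $\kappa<1$ is fixed, possibly $\alpha$-dependent; I would first try $\kappa=1/2$. These cones are symmetric, have positive measure, contain $L_u$ and $L_s$, and intersect only at $0$. The choice of $L_s$ is motivated by the observation that the approximate contracting direction solves $w_1\approx w_2\approx0$, giving $\vvec\approx(\alpha^2\sigma_1\sigma_2,-\alpha\sigma_2,1)v_3$, which is dominated by $e_1$.

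For invariance and expansion of $C_u$, take $\vvec\in C_u$ and split into two cases. If $|v_2|\ge|v_3|$, then $|w_1|\ge(\alpha-1)|v_2|$ and $|w_3|\ge\alpha|w_1|-|v_3|\ge(\alpha^2-\alpha-1)|v_2|$. Hence $|w_1|\le \alpha^{-1}(1+O(\alpha^{-1}))|w_3|$, which lands in $\interior C_u$, and $|w|\gtrsim\alpha^2|\vvec|$. If $|v_3|>|v_2|$, then $|w_2|\ge \alpha|v_3|-|v_2|\ge(\alpha-1)|v_3|$, while $|w_1|\le(\alpha+1)|v_3|$. In this case I would compare $|w_1|$ against $\max(|w_2|,|w_3|)$, using $w_3=v_3+\alpha\sigma_3w_1$: either $|w_1|\le \frac{1}{2}|w_2|$ directly, or $|w_1|$ is comparable to $\alpha|v_3|$ and then $|w_3|\ge\alpha|w_1|-|v_3|$ dominates. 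Either way $|w|\ge c\,\alpha|\vvec|$.

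Iterating along an orbit in $\mathcal R$, where the chain rule applies at every step, gives $|DT^n\vvec|\ge (c\alpha)^n|\vvec|$, so $\gamma=c\alpha>1$ once $\alpha\geq\alpha_0$. The stable cone is treated the same way with $DT^{-1}=D_1^{-1}D_2^{-1}D_3^{-1}$, which has the same structure with $\sigma_i\mapsto-\sigma_i$ and the order of the shears reversed: $w'_3=v_3-\alpha\sigma_3v_1$, $w'_2=v_2-\alpha\sigma_2w'_3$, $w'_1=v_1-\alpha\sigma_1w'_2$. For $\vvec\in C_s$ one gets $|w'_3|\approx\alpha|v_1|$, $|w'_2|\approx\alpha^2|v_1|$ and $|w'_1|\approx\alpha^3|v_1|$. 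Thus $DT^{-1}\vvec$ is strongly $e_1$-dominated, lies in $\interior C_s$, and is expanded by a factor of order $\alpha^{3}$.

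I expect the main obstacle to be the second case of the $C_u$ analysis. There, cancellations between $v_2$ and $\alpha\sigma_2v_3$, or between $v_3$ and $\alpha\sigma_3w_1$, could make the dominant output coordinate small for some sign pattern. If the plain max-norm cone fails, my fallback is an $\alpha$-weighted cone such as $C_u=\{|v_1|\le \alpha^{-1/2}(|v_2|+\alpha^{-1/2}|v_3|)\}$, with weights chosen so that in every case at least one of $w_2$ and $w_3$ is forced to be of order $\alpha$ times the input without cancellation. The remaining pieces should be routine: that $\mathcal R$ has full measure (countable union of null sets, since $T$ is bi-Lipschitz), and that the estimates are uniform over the sign pattern.
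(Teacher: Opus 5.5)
Your proposal is correct, and it takes a genuinely different, more elementary route than the paper.

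\textbf{The paper's route.} For each of the eight matrices $A_{a,b,c}$ the paper computes the spectral splitting $E^u_i\oplus E^s_i$, using $|\lambda_s|\asymp\alpha^{-3}$, $|\lambda_{u,j}|\asymp\alpha^{3/2}$ and explicit left/right stable eigenvectors. It defines local cones through the non-orthogonal projectors $P^{s,u}_i$, shows these projectors are all $O(\alpha^{-1})$-close (every stable direction lies near $e_1$), and takes the global cones to be the intersections. Invariance and the rates $\alpha$ and $\alpha^3$ then come from $\sigma_{\min}(A_i|_{E^u_i})\gtrsim\alpha$ and $|\lambda_s|^{-1}\gtrsim\alpha^3$.

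\textbf{Your route.} You use fixed coordinate cones around $e_1^\perp$ and $\spann\{e_1\}$, which are the same $L_u$ and $L_s$ as in the paper. You only need the reverse triangle inequality with $|a|=|b|=|c|=\alpha$, so uniformity over the sign pattern is automatic and no eigenvalue information is required. Your estimates give the same rates: one-step expansion $\gtrsim\alpha$ on $C_u$, and $\gtrsim\alpha^3$ for $DT^{-1}$ on $C_s$. The latter is exactly what the bunching inequality needs later.

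\textbf{The case you flagged is not an obstacle.} Suppose $|v_1|\le|v_3|$ and $|v_2|<|v_3|$. Then either $|w_1|\le|v_3|/(\alpha-1)<(\alpha-1)|v_3|\le|w_2|$, or else $|w_3|\ge\alpha|w_1|-|v_3|>|w_1|$. Both inequalities are strict for $\alpha>2$. No cancellation between $v_2$ and $\alpha\sigma_2v_3$, or between $v_3$ and $\alpha\sigma_3w_1$, can hurt you, because you only ever use lower bounds from the reverse triangle inequality. So the weighted fallback cone is unnecessary.

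\textbf{What each approach buys.} The paper's heavier spectral route produces structure it reuses later. The local splittings $E^u_i\oplus E^s_i$ serve as graph coordinates in Lemma \ref{lem:graph-transform}, where the contraction factor $q\lesssim\alpha^{-4}$ is read off from $|\lambda_{s,i}|\,\|(A_i^u)^{-1}\|$. With your cones that contraction would have to be re-derived, for instance by a direct computation in the coordinates $e_1^\perp\oplus\spann\{e_1\}$. For the proposition itself, however, your argument is complete and shorter.
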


This result will lead to uniform hyperbolicity as explained in Section \ref{s:hyperbolic-splitting}, and eventually to the proof of Theorem \ref{thm}. We remark that we choose positive values of $\alpha$, but by symmetry the same result holds true provided that $\alpha\leq \alpha_0$ for some threshold $\alpha_0<0$.

\subsection{Singular set and smoothness regions}
\label{sec:smoothness-regions}

First of all we observe that the map $T$ from \eqref{eq:shears} is Lipschitz, but its derivative is not well-defined everywhere on $\T^3$. There are certain points where $T$ becomes singular (the edge of the tents), and we need to introduce a notation that separates the singular set from the regions of smoothness where $DT$ is well-defined.

Let us denote by \(\Sigma=\{0,\pi\}\subset\mathbb T\) the singular set of \(h\). Then, the singular set of the time-one map \(T\) is
\[
\cS = \{y\in\Sigma\} \cup\{z\in\Sigma\} \cup\{x+\alpha h(y)\in\Sigma\},
\]
where as usual we assume all expressions to be mod $2\pi$.
It is a finite union of Lipschitz hypersurfaces, and therefore has zero Lebesgue measure.  Since \(T\) and \(T^{-1}\) are Lipschitz, the complement of the orbit of $\cS$
\[
\mathcal R = \T^3\setminus\bigcup_{n\in\Z}T^n(\cS)
\]
is a \(T\)-invariant set of full measure. We call $\mathcal{R}$ the (full-orbit) regular set of $T$.

\begin{figure}
\centering
\includegraphics[width=0.4\linewidth]{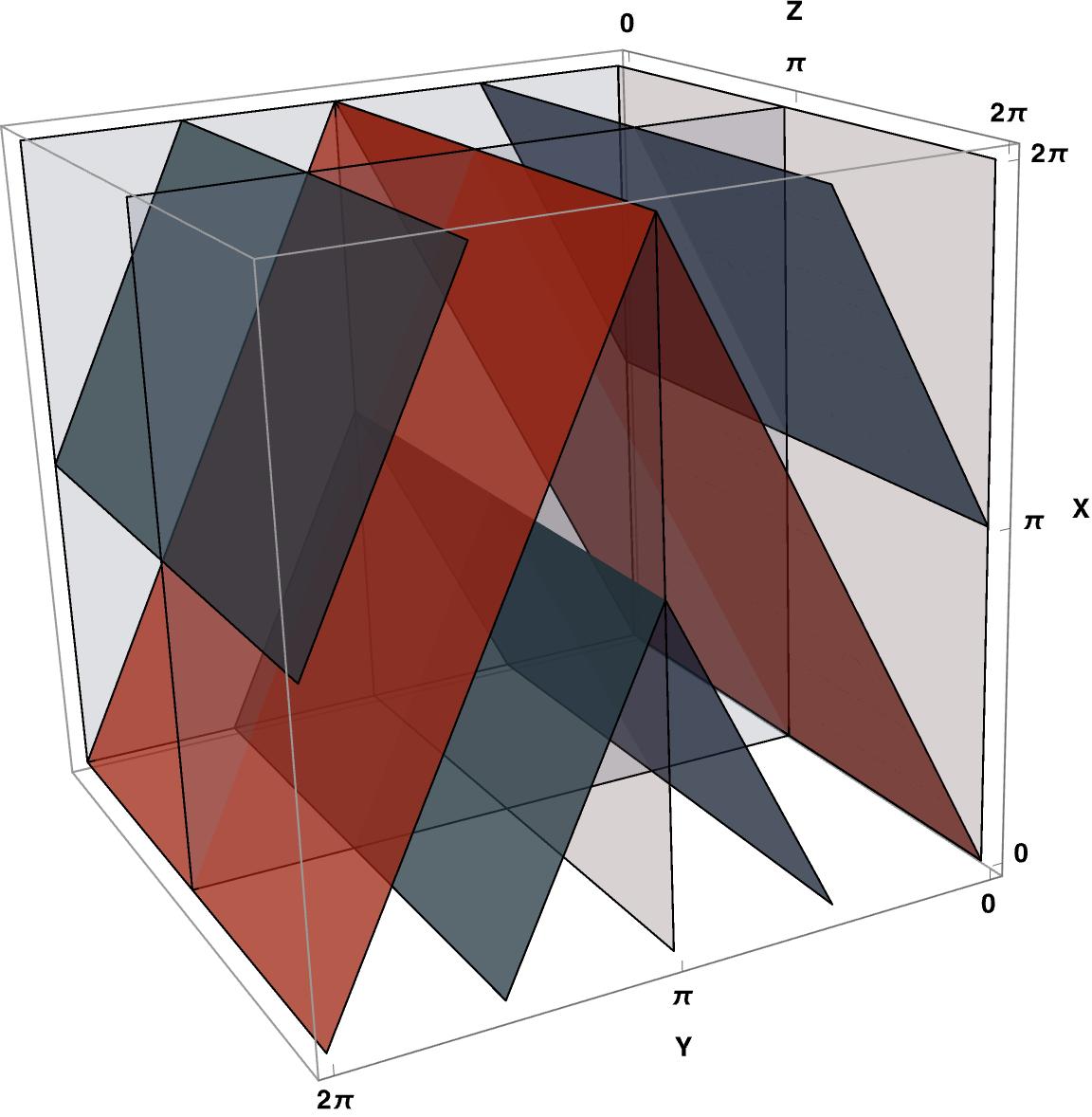}
\includegraphics[width=0.4\linewidth]{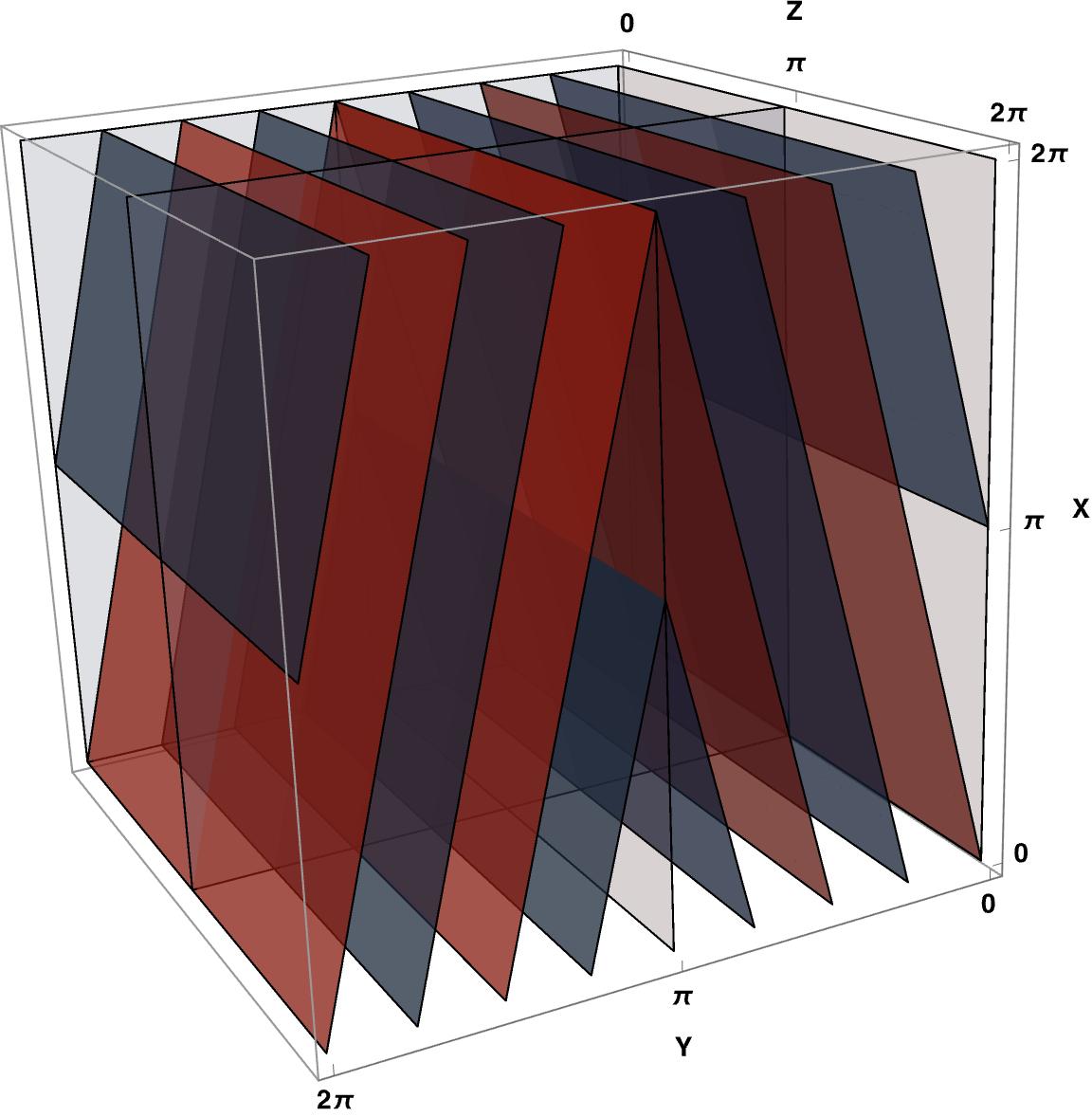}
\caption{Sections of the singular set \(\cS\) for \(\alpha=2\) (left)
and \(\alpha=4\) (right).}
\label{fig:sing-set}
\end{figure}

Let now $I_-=(0,\pi)$ and $I_+=(\pi,2\pi)$. For parameters \(\eps_1,\eps_2,\eps_3\in\{-1,+1\}\), we define
\[
\mathcal R_{\eps_1,\eps_2,\eps_3} = \left\{
(x,y,z): y\in I_{\eps_1}, z\in I_{\eps_2}, x+\alpha h(y)\in I_{\eps_3} \right\}.
\]
These are precisely the eight connected smoothness regions of the time-one map.  Indeed, the change of variables
\[
(x,y,z)\mapsto(x+\alpha h(y),y,z)
\]
maps each region homeomorphically onto \(I_{\eps_3}\times I_{\eps_1}\times I_{\eps_2}\). Graphic representation of the smoothness regions in $\T^3$ for different values of $\alpha$ can be observed in Figure \ref{fig:sing-set}.

Since the shears are affine, they have the property that the gradient matrix on each connected component \(\mathcal R_{\eps_1,\eps_2,\eps_3}\) is given by a matrix with real coefficients. More in detail, consider a point \(\xvec\in\mathcal R_{\eps_1,\eps_2,\eps_3}\) and define $a=\alpha\eps_1$, $b=\alpha\eps_2$, and $c=\alpha\eps_3$. Then the chain rule gives
\begin{equation}\label{eq:gradT}
DT(\xvec)=A_{a,b,c}
=
\begin{pmatrix}
1&a&0\\
0&1&b\\
c&ac&1
\end{pmatrix}.
\end{equation}
Thus all eight sign choices \(a,b,c\in\{-\alpha,\alpha\}\) occur, one on
each smoothness region.  Notice that \(\det A_{a,b,c}=1\) regardless of the choice of $a,b,c$ and thus $A_{a,b,c}\in SL_3(\R)$. Moreover we can explicitly compute the inverse matrix, that is given by
\begin{equation}\label{eq:inverse-DT}
A_{a,b,c}^{-1}
=
\begin{pmatrix}
1-abc&-a&ab\\
bc&1&-b\\
-c&0&1
\end{pmatrix}.
\end{equation}

\subsection{Uniform spectral estimates}

In this section we give spectral estimates for the gradient of the map $T$, defined in \eqref{eq:gradT}, that is piecewise constant in $\T^3$. All the arguments in this section follow simple computations and finite-dimensional linear algebra.

\begin{lemma}\label{lemma:eigenvalues}
There exists a constant \(\alpha_0>0\) such that, whenever \(\alpha\geq\alpha_0\), every matrix \(A_{a,b,c}\) has a distinguished real eigenvalue \(\lambda_s\) and two further eigenvalues \(\lambda_{u,1},\lambda_{u,2}\) satisfying
\begin{equation}\label{eq:eigenvalue-bounds}
|\lambda_s|\asymp  \alpha^{-3}\quad \text{and} \quad |\lambda_{u,j}|\asymp  \alpha^{3/2}.
\end{equation}
In particular, \(\lambda_s\) is the unique eigenvalue in the closed unit disc in \(\Co\) for large \(\alpha\).
\end{lemma}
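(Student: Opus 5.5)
The plan is to compute the characteristic polynomial of \(A_{a,b,c}\) in \eqref{eq:gradT} explicitly and read off the root asymptotics. The trace is \(3\) and the determinant is \(1\). The sum of the principal \(2\times2\) minors is \(1+1+(1-abc)=3-abc\). Hence
\[
p(\lambda)=\lambda^3-3\lambda^2+(3-abc)\lambda-1=(\lambda-1)^3-\kappa\lambda,\qquad \kappa:=abc\in\{\alpha^3,-\alpha^3\}.
\]
So the spectrum depends only on the sign of \(\kappa\), and we must treat two cases, \(\kappa=\pm\alpha^3\), with \(|\kappa|=\alpha^3\) large.

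\emph{The small real root.} Since \(p\) has real coefficients, I would locate \(\lambda_s\) by the intermediate value theorem on a short interval around \(-1/\kappa\). We have \(p(0)=-1\). Evaluating at \(\lambda=-2/\kappa\) gives
\[
p(-2/\kappa)=(-1-2/\kappa)^3+2=1+O(\alpha^{-3})>0
\]
for \(\alpha\) large, so there is a real root between \(0\) and \(-2/\kappa\). Similarly, at \(\lambda=-1/(2\kappa)\) we have \(p=-1/2+O(\alpha^{-3})<0\). This pins the root to the interval between \(-1/(2\kappa)\) and \(-2/\kappa\), giving \(|\lambda_s|\asymp\alpha^{-3}\). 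Alternatively, Rouché's theorem on the disc \(|\lambda|\le 2\alpha^{-3}\), comparing \(p\) with \(-1-\kappa\lambda\), gives the same root together with its uniqueness in that disc.

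\emph{The two large roots.} Dividing out \(\lambda-\lambda_s\), the remaining roots satisfy \(\lambda_{u,1}+\lambda_{u,2}=3-\lambda_s\) and \(\lambda_{u,1}\lambda_{u,2}=1/\lambda_s\). They are therefore the roots of
\[
\mu^2-(3-\lambda_s)\mu+\lambda_s^{-1}=0.
\]
Here \(\lambda_s^{-1}=-\kappa(1+O(\alpha^{-3}))\), so the discriminant equals \(4\kappa\bigl(1+O(\alpha^{-3/2})\bigr)\). Both roots are thus \(\tfrac{3}{2}\pm\sqrt{\kappa}\,(1+o(1))\). They are real when \(\kappa>0\) and complex conjugate when \(\kappa<0\). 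In either case both have modulus \(\asymp\alpha^{3/2}\). As a consistency check, \(|\lambda_{u,1}\lambda_{u,2}|=|\lambda_s|^{-1}\asymp\alpha^3\) already forces the product to have the right size. The sum bound \(|\lambda_{u,1}+\lambda_{u,2}|\le 4\) then prevents one of the two roots from being small while the other is large.

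\emph{Conclusion.} For \(\alpha\ge\alpha_0\) we get \(|\lambda_{u,j}|\gtrsim\alpha^{3/2}>1\) and \(|\lambda_s|\lesssim\alpha^{-3}<1\), so \(\lambda_s\) is the unique eigenvalue in the closed unit disc. Only two matrices actually need checking, one for each sign of \(\kappa\), so \(\alpha_0\) can be chosen uniformly over all eight sign choices. The only mildly delicate point is keeping the error terms honest in the discriminant expansion when \(\kappa<0\), where the roots are complex. There I would simply bound \(|\mu|^2=|\lambda_s|^{-1}\) directly, since for complex conjugate roots the product equals \(|\mu|^2\). This avoids any square-root asymptotics.
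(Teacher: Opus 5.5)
Your proof is correct and follows the paper's argument essentially step for step. Both use the characteristic polynomial $(\lambda-1)^3-abc\,\lambda$, the same intermediate-value localisation of $\lambda_s$ in the window $\tfrac12\alpha^{-3}\le|\lambda_s|\le 2\alpha^{-3}$ (the paper's substitution $\lambda=-st/\alpha^3$ with $t\in[1/2,2]$), and Vieta's relations $S=3-\lambda_s$, $P=\lambda_s^{-1}$ for the two large roots. Your sum--product bound, or $|\mu|^2=P$ in the complex case, is only a slightly tidier way of finishing than the paper's quadratic-formula estimate, not a different route.
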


\begin{proof}
Put \(K=abc=s\alpha^3\) with \(s\in\{-1,+1\}\).  A direct calculation gives
\[
\det(\lambda I-A_{a,b,c})=(\lambda-1)^3-K\lambda.
\]
Writing \(R=\alpha^3\) and \(\lambda=-st/R\), the characteristic equation becomes
\[
\left(1+\frac{st}{R}\right)^3-t=0.
\]
For sufficiently large \(R\), the left-hand side is positive at \(t=1/2\), negative at \(t=2\), and strictly decreasing on \([1/2,2]\).  Hence there is a (real) root
\[
\lambda_s=-\frac{s t_s}{R}, \quad \frac12<t_s<2,
\]
which proves the stable estimate. Following Vieta's formulas, the other two roots satisfy
\[
\lambda_{u,1}+\lambda_{u,2}=3-\lambda_s, \quad
\lambda_{u,1}\lambda_{u,2}=\lambda_s^{-1}.
\] 
Using these relations, the characteristic polynomial can be rewritten like 
\[
(\lambda-\lambda_s)(\lambda-\lambda_{u,1})(\lambda-\lambda_{u,2}) = (\lambda-\lambda_s)(\lambda^2-S\lambda+P)
\]
with $S = \lambda_{u,1}+\lambda_{u,2} = 3-\lambda_s$ and $P=\lambda_{u,1}\lambda_{u,2}=\lambda_s^{-1}$.
We hence compute the remaining roots,
\[
\lambda_{u,1},\lambda_{u,2}
=\frac12 (S\pm\sqrt{S^2-4P}).
\]
Here \(|S|\leq C\), and by the estimate for the stable eigenvalue, 
\(|P|\asymp\alpha^3\), therefore the modulus of \((S^2-4P)^{1/2}\) scales like $\alpha^{3/2}$, which is much larger that $|S|$ for large values of $\alpha$. This proves the remaining estimates in \eqref{eq:eigenvalue-bounds}. Moreover, Vieta's formulas give that when \(K>0\) the two unstable roots are real and have opposite signs, whereas when \(K<0\) they form a complex-conjugate pair.
\end{proof}

Given an eigenvalue \(\lambda\) and setting \(\mu=\lambda-1\), convenient right and left eigenvectors for the matrices \eqref{eq:gradT} are
\begin{equation}\label{eq:eigenvectors}
w_{\mathrm{R}}(\lambda)=\left(1,\frac{\mu}{a},\frac{\mu^2}{ab}\right), \quad w_{\mathrm{L}}(\lambda) =\left(1,\frac{\mu^2}{bc},\frac{\mu}{c}\right).
\end{equation}
Indeed, \(A_{a,b,c}w_{\mathrm{R}}(\lambda)=\lambda w_{\mathrm{R}}(\lambda)\) and \(w_{\mathrm{L}}(\lambda)^\top A_{a,b,c} =\lambda w_{\mathrm{L}}(\lambda)^\top\), or equivalently \(A_{a,b,c}^\top w_{\mathrm{L}}(\lambda)=\lambda w_{\mathrm{L}}(\lambda)\).
Moreover, the characteristic equation gives
\[
w_{\mathrm{L}}(\lambda)\cdot w_{\mathrm{R}}(\lambda)=1+\frac{2(\lambda-1)^3}{abc}=1+2\lambda.
\]
In particular, for the stable root there holds \(1+2\lambda_s\neq0\) when \(\alpha\geq\alpha_0\) is large, so we may set
\begin{equation}\label{eq:stable-normalization}
v^s_{a,b,c}=\frac{w_{\mathrm{R}}(\lambda_s)}{|w_{\mathrm{R}}(\lambda_s)|},\quad n^s_{a,b,c} = \frac{|w_{\mathrm{R}}(\lambda_s)|}{1+2\lambda_s}
w_{\mathrm{L}}(\lambda_s).
\end{equation}
Then \(|v^s_{a,b,c}|=1\) and \(n^s_{a,b,c}\cdot v^s_{a,b,c}=1\). The choice of \(v^s_{a,b,c}\) and \(n^s_{a,b,c}\) is uniform within smoothness regions, and so we make explicit that they depend on the choice of $a,b,c$. For each smoothness component $\mathcal{R}_{a,b,c}$, we define the subspaces
\[
E^s_{a,b,c}=\spann\{v^s_{a,b,c}\}, \quad E^u_{a,b,c}=\ker n^s_{a,b,c}.
\]
These are real invariant subspaces of dimensions one and two respectively. This definition characterises the unstable subspace only using the real stable eigenvalue. Finally, we introduce the local projector operators $P^s_{a,b,c}:\R^3\to E^s_{a,b,c}$ and $P^u_{a,b,c}:\R^3\to E^u_{a,b,c}$ via
\begin{equation}\label{eq:local-projectors}
P^s_{a,b,c}w=(n^s_{a,b,c}\cdot w)v^s_{a,b,c}, \quad P^u_{a,b,c}w=w-P^s_{a,b,c}w.
\end{equation}
The definition of the eigenvectors implies that both projectors commute with the matrix \(A_{a,b,c}\), indeed we compute
\[
P^s_{a,b,c}A_{a,b,c}w =(A_{a,b,c}^\top n^s_{a,b,c}\cdot\vvec)v^s_{a,b,c} =\lambda_s(n^s_{a,b,c}\cdot w)v^s_{a,b,c} =A_{a,b,c}P^s_{a,b,c} w,
\]
and so it follows for $P^u_{a,b,c}$ as well.

\begin{remark}\label{rmk:lipT}
This lemma shows that the spectral radius of $A_{a,b,c}$ scales like $\rho(A_{a,b,c})\asymp\alpha^{3/2}$, whereas one can see that the largest singular value scales like $\sigma_{\max}(A_{a,b,c}) = \|A_{a,b,c}\| \asymp \alpha^2$. For large values of $\alpha$ it holds $\sigma_{\max}(A_{a,b,c})>\rho(A_{a,b,c})$, which is due to the fact that $A_{a,b,c}$ is a highly non-normal matrix. In particular, this implies that the Lipschitz norm of $T$ must scale like the largest singular value, namely
\[
\Lip(T) = \esssup_{\xvec\in\T^3}\|DT(\xvec)\| \asymp \alpha^2.
\]
This will be relevant later in Section \ref{s:L2-kernel}.
\end{remark}

\subsection{The cone condition and proof of Proposition \ref{prop:uniform-cone}}\label{s:cones}

Let us label all eight matrices $A_{a,b,c}$ in \eqref{eq:gradT} by \(A_1,\ldots,A_8\), and write \(\lambda_{s,i}\), \(E_i^{s,u}\), \(P_i^{s,u}\), \(v_i^s\), and \(n_i^s\) for their associated stable eigenvalues, stable/unstable subspaces and projectors, and stable left/right eigenvectors. In this section we will prove that the piecewise affine vector field constructed in \eqref{eq:velocity} satisfies the uniform cone condition from Definition \ref{def:cone}, and thus we conclude with the proof of Proposition \ref{prop:uniform-cone}.

\begin{lemma}\label{lemma:geometry-cones}
Let $e_1=(1,0,0)$. Then, for any $\alpha$ large enough and any $1\leq i\leq 8$ we have
\begin{equation}\label{eq:projector-geometry}
|v_i^s-e_1|+|n_i^s-e_1|\lesssim \alpha^{-1}, \quad \|P_i^s\|+\|P_i^u\|\lesssim 1.
\end{equation}
Moreover, for every \(1\leq i,j\leq 8\), there holds
\begin{equation}\label{eq:projector-proximity}
\|P_i^s-P_j^s\|+\|P_i^u-P_j^u\| \lesssim \alpha^{-1}.
\end{equation}
\end{lemma}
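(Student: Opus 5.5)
The plan is to compute everything explicitly from the formulas \eqref{eq:eigenvectors} and \eqref{eq:stable-normalization}, using only the bound $|\lambda_s|\asymp\alpha^{-3}$ from Lemma \ref{lemma:eigenvalues}. Set $\mu_s=\lambda_s-1$. Then $|\mu_s|\le 1+O(\alpha^{-3})\le 2$ for $\alpha$ large. Since $|a|=|b|=|c|=\alpha$, we get
\[
w_{\mathrm R}(\lambda_s)-e_1=\Bigl(0,\tfrac{\mu_s}{a},\tfrac{\mu_s^2}{ab}\Bigr),\qquad w_{\mathrm L}(\lambda_s)-e_1=\Bigl(0,\tfrac{\mu_s^2}{bc},\tfrac{\mu_s}{c}\Bigr),
\]
and both have norm $\lesssim\alpha^{-1}$. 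In particular $|w_{\mathrm R}(\lambda_s)|=1+O(\alpha^{-1})$, because $\bigl||w|-1\bigr|\le|w-e_1|$. Normalizing therefore gives $|v^s_i-e_1|\le |w_{\mathrm R}-e_1|+\bigl||w_{\mathrm R}|-1\bigr|\lesssim\alpha^{-1}$.

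For $n^s_i$, we use $1+2\lambda_s=1+O(\alpha^{-3})$. This makes the scalar factor $|w_{\mathrm R}(\lambda_s)|/(1+2\lambda_s)$ equal to $1+O(\alpha^{-1})$. Multiplying by $w_{\mathrm L}=e_1+O(\alpha^{-1})$ then gives $|n^s_i-e_1|\lesssim\alpha^{-1}$. This settles the first half of \eqref{eq:projector-geometry}.

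For the operator bounds, $P^s_i$ is the rank-one operator $w\mapsto(n^s_i\cdot w)v^s_i$, so $\|P^s_i\|=|n^s_i|\,|v^s_i|\le 1+O(\alpha^{-1})\lesssim1$. We also have $\|P^u_i\|\le 1+\|P^s_i\|\lesssim1$.

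For \eqref{eq:projector-proximity}, we compare each projector to the fixed projector $P_0w=(e_1\cdot w)e_1$. Writing
\[
P^s_iw-P_0w=\bigl((n^s_i-e_1)\cdot w\bigr)v^s_i+(e_1\cdot w)(v^s_i-e_1),
\]
we get $\|P^s_i-P_0\|\le|n^s_i-e_1|\,|v^s_i|+|v^s_i-e_1|\lesssim\alpha^{-1}$. The triangle inequality then gives $\|P^s_i-P^s_j\|\lesssim\alpha^{-1}$. Since $P^u_i-P^u_j=-(P^s_i-P^s_j)$, the unstable estimate follows immediately.

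The only point needing a little care is that all implicit constants are uniform over the eight sign choices. This holds because every bound above depends on $a,b,c$ only through $|a|=|b|=|c|=\alpha$ and on the uniform bound $\tfrac12<t_s<2$ from Lemma \ref{lemma:eigenvalues}. There is no real obstacle; the argument is bookkeeping of $O(\alpha^{-1})$ terms, with the dominant error coming from the entries $\mu_s/a$ and $\mu_s/c$.
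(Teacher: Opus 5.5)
Your proposal is correct and follows essentially the same route as the paper: you read off $v_i^s,n_i^s=e_1+O(\alpha^{-1})$ from the explicit eigenvector formulas using $|\lambda_s|\asymp\alpha^{-3}$, then use the rank-one structure $P_i^s=v_i^s(n_i^s)^\top$ together with $P_i^u=I-P_i^s$. The differences are cosmetic. You compare each $P_i^s$ to the fixed projector $e_1e_1^\top$ instead of splitting $P_i^s-P_j^s=(v_i^s-v_j^s)(n_i^s)^\top+v_j^s(n_i^s-n_j^s)^\top$ directly, and you write out the $O(\alpha^{-1})$ bookkeeping that the paper leaves implicit.
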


\begin{proof}
Since \(|\lambda_s|=O(\alpha^{-3})\), formulas \eqref{eq:eigenvectors}--\eqref{eq:stable-normalization} readily give the first estimate in \eqref{eq:projector-geometry} by using the definition \eqref{eq:stable-normalization}.  From \eqref{eq:local-projectors} we get the identity \(P_i^s=v_i^s(n_i^s)^\top\), that gives the uniform operator bound in \eqref{eq:projector-geometry} and
\[
P_i^s-P_j^s =(v_i^s-v_j^s)(n_i^s)^\top +v_j^s(n_i^s-n_j^s)^\top.
\]
This, together with the definition \(P_i^u=I-P_i^s\), proves \eqref{eq:projector-proximity}.
\end{proof}

Now we fix \(\theta\in(0,1)\) independently of \(\alpha\), and define the local cones
\begin{equation}\label{eq:local-cones}
C_{s,i}=\{w\in\R^3:|P_i^uw|\leq\theta|P_i^sw|\}, \quad C_{u,i}=\{w\in\R^3:|P_i^sw|\leq\theta|P_i^uw|\}.
\end{equation}

\begin{figure}
    \centering
    \includegraphics[width=0.4\linewidth]{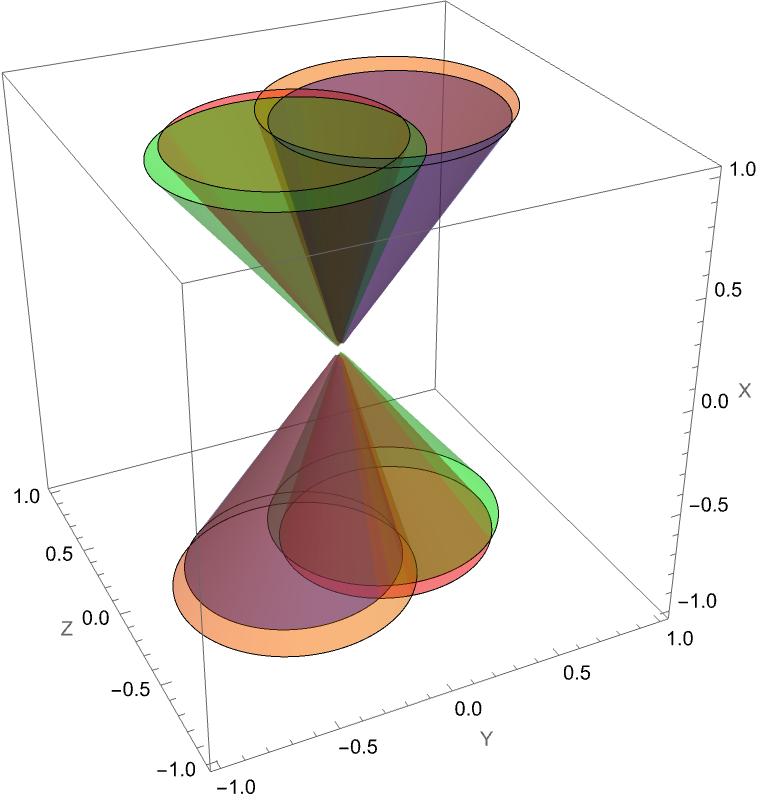}
    \includegraphics[width=0.4\linewidth]{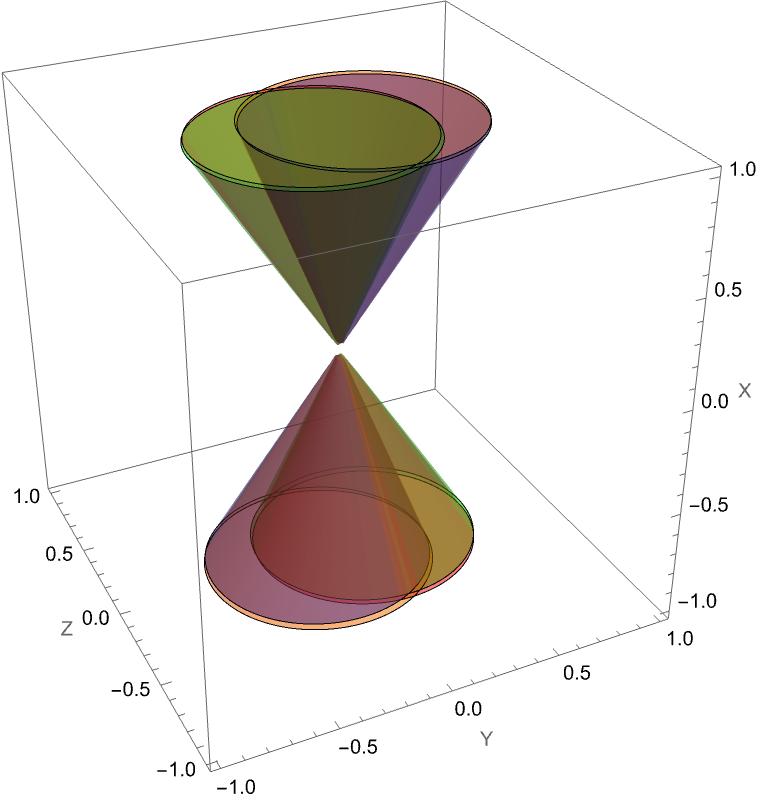}
    \caption{Cartoon of the stable cones for matrices of the form \eqref{eq:gradT} corresponding to $a=b=\alpha$ (red), $a=b=-\alpha$ (green), $a=-b=\alpha$ (blue), $a=-b=-\alpha$ (orange), and always $a=c$. We plotted $\theta = 1/2$ and $\alpha = 4$ (left), $\alpha=8$ (right).}
    \label{fig:stable-cones}
\end{figure}

In Figure \ref{fig:stable-cones} we can appreciate how as $\alpha>0$ grows bigger, all the local stable cones concentrate near the direction $e_1=(1,0,0)$ as claimed in Lemma \ref{lemma:geometry-cones}.

The following lemma gives useful estimates on the size of the opening for these local cones.

\begin{lemma}\label{lemma:opening}
Let $w\in\R^3$, then for any $1\leq i\leq 8$ there holds
\begin{equation}\label{eq:opening-u}
\frac{|P_i^sA_iw|}{|P_i^uA_iw|} \lesssim \alpha^{-4} \frac{|P_i^sw|}{|P_i^uw|}
\end{equation}
whenever \(P_i^uw\neq 0\), and
\begin{equation}\label{eq:opening-s}
\frac{|P_i^uA_i^{-1}w|}{|P_i^sA_i^{-1}w|} \lesssim \alpha^{-4} \frac{|P_i^uw|}{|P_i^sw|}
\end{equation}
whenever \(P_i^sw\neq 0\).
\end{lemma}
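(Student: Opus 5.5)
The plan rests on two facts: the projectors $P_i^s,P_i^u$ commute with $A_i$, and the restriction of $A_i$ to the two-dimensional invariant subspace $E_i^u$ obeys a quadratic relation. This reduces both \eqref{eq:opening-u} and \eqref{eq:opening-s} to one operator bound,
\[
\bigl\|A_i^{-1}|_{E^u_i}\bigr\|\lesssim\alpha^{-1}.
\]

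\emph{Stable components.} Since $P_i^sA_i=A_iP_i^s$ and $E_i^s$ is the $\lambda_{s,i}$-eigenline, we have $P_i^sA_iw=\lambda_{s,i}P_i^sw$. Likewise $P_i^sA_i^{-1}w=\lambda_{s,i}^{-1}P_i^sw$. Lemma \ref{lemma:eigenvalues} gives $|\lambda_{s,i}|\asymp\alpha^{-3}$, hence
\[
|P_i^sA_iw|\lesssim\alpha^{-3}|P_i^sw|,\qquad |P_i^sA_i^{-1}w|\gtrsim\alpha^{3}|P_i^sw|.
\]

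\emph{Unstable components.} Here $P_i^uA_i^{\pm1}w=A_i^{\pm1}P_i^uw$ with $P_i^uw\in E_i^u$. The restriction $A_i|_{E_i^u}$ has eigenvalues $\lambda_{u,1},\lambda_{u,2}$. By Cayley--Hamilton on this invariant plane,
\[
A_i^2-SA_i+P=0 \quad\text{on } E_i^u,
\]
with $S=3-\lambda_s$ and $P=\lambda_s^{-1}$ as in the proof of Lemma \ref{lemma:eigenvalues}. Since $P\neq0$, this gives, for $v\in E_i^u$,
\[
A_i^{-1}v=P^{-1}(S-A_i)v.
\]
Using $|S|\lesssim1$, $|P|\asymp\alpha^3$ and $\|A_i\|\asymp\alpha^2$ (Remark \ref{rmk:lipT}, or directly from \eqref{eq:gradT}), we get
\[
|A_i^{-1}v|\lesssim\frac{1+\alpha^2}{\alpha^3}|v|\lesssim\alpha^{-1}|v|.
\]
Equivalently, $|A_iv|\gtrsim\alpha|v|$ for every $v\in E_i^u$.

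\emph{Conclusion.} For \eqref{eq:opening-u} we divide:
\[
\frac{|P_i^sA_iw|}{|P_i^uA_iw|}\lesssim\frac{\alpha^{-3}|P_i^sw|}{\alpha|P_i^uw|}=\alpha^{-4}\frac{|P_i^sw|}{|P_i^uw|}.
\]
The denominator is nonzero because $P_i^uw\neq0$ and $A_i$ is invertible. For \eqref{eq:opening-s},
\[
\frac{|P_i^uA_i^{-1}w|}{|P_i^sA_i^{-1}w|}\lesssim\frac{\alpha^{-1}|P_i^uw|}{\alpha^{3}|P_i^sw|}=\alpha^{-4}\frac{|P_i^uw|}{|P_i^sw|}.
\]
All implicit constants are uniform over the eight sign choices, since Lemma \ref{lemma:eigenvalues} is uniform in $a,b,c$.

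\emph{Main obstacle.} The expected difficulty is the lower bound $|A_iv|\gtrsim\alpha|v|$ on $E^u_i$. Because $A_i$ is highly non-normal, one cannot read it off the unstable eigenvalues of size $\alpha^{3/2}$. A naive look at $A_i^{-1}$ is also misleading: its entries are of order $\alpha^3$, and $A_i^{-1}e_2$ has size $\alpha$. The Cayley--Hamilton identity on $E_i^u$ avoids these issues. As a sanity check, I would verify directly with $\mu\approx-1$: here $E_i^u\approx\{v_1=-v_2/(bc)+v_3/c\}$, and $A_i^{-1}v\approx\bigl(-v_2/(bc)+v_3/c,\,0,\,v_2/b\bigr)=O(\alpha^{-1}|v|)$. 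The large entries cancel exactly on this plane.
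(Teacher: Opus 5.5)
Your proof is correct and follows the paper's route: commute the spectral projectors with $A_i$, use $|\lambda_{s,i}|\asymp\alpha^{-3}$ on $E_i^s$, and bound $A_i|_{E_i^u}$ from below by $|\det(A_i|_{E_i^u})|/\|A_i\|\gtrsim\alpha^{3}/\alpha^{2}$. The only difference is cosmetic: you get the key bound from the Cayley--Hamilton (adjugate) formula $M^{-1}=P^{-1}(S-M)$ for the $2\times 2$ restriction, while the paper uses $\sigma_{\min}\sigma_{\max}=|\det|$, and these express the same two-dimensional fact.
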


\begin{proof}
Since $E^u_i$ is invariant under $A_i$, the restriction $A_i|_{E_i^u}:E_i^u\to E^u_i$ is a well-defined linear endomorphism of the two-dimensional unstable subspace. Its determinant has modulus
\[
|\det (A_i|_{E_i^u})|=|\lambda_{u,1}\lambda_{u,2}|=|\lambda_s|^{-1}\gtrsim \alpha^3
\]
according to Lemma \ref{lemma:eigenvalues}, whereas by definition in \eqref{eq:gradT}, there holds \(\|A_i|_{E_i^u}\|\leq\|A_i\|\lesssim \alpha^2\). The two singular values of this two-dimensional restriction $0<\sigma_{\min}(A_i|_{E^u_i})\leq \sigma_{\max}(A_i|_{E^u_i})$ are given by
\[
\sigma_{\max}(A_i|_{E^u_i}) = \|A_i|_{E^u_i}\|, \quad \sigma_{\min}(A_i|_{E^u_i}) = \inf_{w\in E_i^u,\ |w|=1}|A_iw|.
\]
Since $\sigma_{\min}(A_i|_{E^u_i})\sigma_{\max}(A_i|_{E^u_i}) = |\det(A_i|_{E^u_i})|\gtrsim \alpha^3$ and $\sigma_{\max}(A_i|_{E^u_i})= \|A_i|_{E^u_i}\|\lesssim \alpha^2$, this yields 
\begin{equation}\label{eq:unstable-min-singular}
\inf_{w\in E_i^u,\ |w|=1}|A_iw| = \sigma_{\min}(A_i|_{E^u_i}) = \frac{|\det(A_i|_{E^u_i})|}{\sigma_{\max}(A_i|_{E^u_i})} \gtrsim \alpha.
\end{equation}
As seen before, the projectors commute with \(A_i\), so together with \eqref{eq:unstable-min-singular} we find
\[
|P_i^sA_iw|=|\lambda_s||P_i^sw| \lesssim \alpha^{-3}|P_i^sw|, \quad |P_i^uA_iw|=|(A_i|_{E^u_i})P_i^uw| \gtrsim \alpha|P_i^uw|
\]
and \eqref{eq:opening-u} follows. Similarly, using that
\begin{equation}\label{eq:unstable-min-singular-2}
\|(A_i|_{E^u_i})^{-1}\| = \frac{1}{\sigma_{\min}(A_i|_{E^u_i})} \lesssim \alpha^{-1}
\end{equation}
and the commutation with the inverse $A_i^{-1}$, we find
\[
|P_i^sA_i^{-1}w|=|\lambda_s^{-1}||P_i^sw| \gtrsim \alpha^3|P_i^sw|, \quad |P_i^uA_i^{-1}w|=|(A_i|_{E^u_i})^{-1}P_i^uw| \lesssim \alpha^{-1}|P_i^uw|,
\]
proving \eqref{eq:opening-s}.
\end{proof}

Next in order we use Lemma \ref{lemma:geometry-cones} to show that if one of the eight matrices $A_i$ has very narrow cones, then the same must be true for all the other matrices.

\begin{lemma}\label{lem:proximity}
Let $w\in\R^3$ and \(\alpha\) be sufficiently large, then the following holds true: 
\begin{enumerate}
    \item If \(|P_i^s w|\leq\eps|P_i^uw|\) for some $1\leq i\leq 8$ and $0<\eps<1$, then 
    \[
    |P_j^sw| \lesssim (\eps+\alpha^{-1})|P_j^uw|,
    \]
    for all $1\leq j \leq 8$.
    \item If \(|P_i^u w|\leq\eps|P_i^sw|\) for some $1\leq i\leq 8$ and $0<\eps<1$, then 
    \[
    |P_j^uw| \lesssim (\eps+\alpha^{-1})|P_j^sw|,
    \]
    for all $1\leq j\leq 8$.
\end{enumerate}
\end{lemma}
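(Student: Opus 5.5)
The plan is to compare the projections for indices $i$ and $j$ directly, using the proximity estimate \eqref{eq:projector-proximity} and the uniform bounds \eqref{eq:projector-geometry} from Lemma \ref{lemma:geometry-cones}. Write $\Delta^s=P_j^s-P_i^s$ and $\Delta^u=P_j^u-P_i^u=-\Delta^s$. Then $\|\Delta^s\|\lesssim\alpha^{-1}$ by \eqref{eq:projector-proximity}, and we may assume $w\neq0$.

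For part (1), the first step is a triangle inequality:
\[
|P_j^sw|\leq|P_i^sw|+\|\Delta^s\||w|\leq \eps|P_i^uw|+C\alpha^{-1}|w|.
\]
Next bound $|w|$ in terms of $|P_i^uw|$. Since $w=P_i^sw+P_i^uw$ and $|P_i^sw|\leq\eps|P_i^uw|$ with $\eps<1$, we get $|w|\leq 2|P_i^uw|$. This gives
\[
|P_j^sw|\lesssim(\eps+\alpha^{-1})|P_i^uw|.
\]
It remains to replace $|P_i^uw|$ by $|P_j^uw|$. Again,
\[
|P_j^uw|\geq|P_i^uw|-\|\Delta^u\||w|\geq(1-2C\alpha^{-1})|P_i^uw|\geq\tfrac12|P_i^uw|
\]
for $\alpha$ large, and combining the two displays yields the claim. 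The implied constant depends only on the constants in Lemma \ref{lemma:geometry-cones}, so it is uniform in $i,j$ and $\eps$.

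Part (2) is the same argument with the roles of $s$ and $u$ exchanged. The hypothesis $|P_i^uw|\leq\eps|P_i^sw|$ gives $|w|\leq2|P_i^sw|$. Then
\[
|P_j^uw|\leq(\eps+2C\alpha^{-1})|P_i^sw| \quad\text{and}\quad |P_j^sw|\geq\tfrac12|P_i^sw|.
\]

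There is no real obstacle here. The one point requiring care is to control the error term $\|\Delta\||w|$ by the \emph{dominant} component, namely $|P_i^uw|$ in (1) and $|P_i^sw|$ in (2), and not by the small one. This works precisely because $\eps<1$ makes $|w|$ comparable to that dominant component. The same comparability is what lets us transfer the dominant component from index $i$ to index $j$ once $\alpha$ is large enough that $2C\alpha^{-1}\leq1/2$.
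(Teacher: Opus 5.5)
Your proposal is correct and follows essentially the same route as the paper. Both use the triangle inequality with \eqref{eq:projector-proximity}, bound $|w|\leq(1+\eps)|P_i^uw|$ by the dominant component, and transfer $|P_i^uw|$ to $|P_j^uw|$ via the reverse triangle inequality for large $\alpha$; your explicit step $1-2C\alpha^{-1}\geq 1/2$ just makes precise what the paper leaves inside a $\gtrsim$.
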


\begin{proof}
We prove \emph{(1)} first. By assumption, \(|w| \leq (1+\eps)|P_i^uw|\) so using Lemma \ref{lemma:geometry-cones} we write, on the one hand
\[
|P^s_jw| \leq |P^s_iw| + \|P^s_j-P^s_i\||w| \lesssim \eps|P^u_iw| + \alpha^{-1}(1+\eps)|P_i^uw|
\]
while on the other hand,
\[
|P^u_jw| \geq |P_i^uw| -\|P_j^u-P_i^u\||w| \gtrsim |P_i^uw|-\alpha^{-1}(1+\eps)|P_i^uw|.
\]
Putting both estimates together it yields \emph{(1)}. The proof of \emph{(2)} is identical with \(s\) and \(u\) interchanged.
\end{proof}

Finally we will combine all these intermediate lemmas to prove that there exist uniform cones that are common to all eight smoothness regions. Define the global cones
\begin{equation}\label{eq:global-cones}
C_u=\bigcap_{i=1}^8C_{u,i},\quad C_s=\bigcap_{i=1}^8C_{s,i},
\end{equation}
where we recall, the local cones $C_{u,i}$ and $C_{s,i}$ are defined in \eqref{eq:local-cones}. Then we have the following proposition.

\begin{proposition}\label{prop:cones}
There exists a constant \(\alpha_0>0\) such that, for all
\(\alpha\geq\alpha_0\) and all \(1\leq i\leq 8\), the global cones defined in \eqref{eq:global-cones} satisfy
\begin{enumerate}
    \item $A_i(C_u\setminus\{0\})\subset\interior C_u$ and $A_i^{-1}(C_s\setminus\{0\})\subset\interior C_s$;
    \item $|A_iw|\gtrsim \alpha|w|$ for all $w\in C_u$ and $|A_i^{-1}w|\gtrsim \alpha^3|w|$ for all $w\in C_s$;
    \item there exist linear subspaces $L_u=e_1^\perp\subset C_u$ and $L_s=\spann\{e_1\}\subset C_s$.
\end{enumerate}
\end{proposition}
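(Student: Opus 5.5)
The plan is to verify the three items directly from the estimates already established: the local contraction of cone openings in Lemma \ref{lemma:opening}, the comparability of the eight splittings in Lemma \ref{lem:proximity}, and the near-alignment of all stable data with $e_1$ in Lemma \ref{lemma:geometry-cones}. Throughout, $\theta\in(0,1)$ is fixed and all implicit constants are independent of $\alpha$, so each ``$\lesssim\alpha^{-1}$'' error can be made smaller than any prescribed multiple of $\theta$ by enlarging $\alpha_0$.

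\emph{Item (1).} Take $w\in C_u\setminus\{0\}$ and fix $i$. Since $w\in C_{u,i}$ and $w\neq0$, we have $P_i^uw\neq0$, because otherwise $|P_i^sw|\le 0$ and $w=0$. Lemma \ref{lemma:opening}, estimate \eqref{eq:opening-u}, then gives
\[
|P_i^sA_iw|\le C\alpha^{-4}\theta\,|P_i^uA_iw|.
\]
We now apply Lemma \ref{lem:proximity}(1) to $A_iw$ with $\eps=C\alpha^{-4}\theta$. This yields, for every $j$,
\[
|P_j^sA_iw|\le C'(\alpha^{-4}\theta+\alpha^{-1})|P_j^uA_iw|.
\]
Choosing $\alpha_0$ so that $C'(\alpha^{-4}\theta+\alpha^{-1})<\theta$, all eight defining inequalities hold strictly at $A_iw$. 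Note that $A_iw\neq0$, since $A_i$ is invertible. The set where all eight strict inequalities hold is open, because the maps $w\mapsto|P_j^{s,u}w|$ are continuous, and it is contained in $C_u$. Hence $A_iw\in\interior C_u$.

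The stable statement is symmetric. For $w\in C_s\setminus\{0\}$ we have $P_i^sw\neq0$. Estimate \eqref{eq:opening-s} followed by Lemma \ref{lem:proximity}(2) gives strict inequalities for every $j$ at $A_i^{-1}w$.

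\emph{Item (2).} For $w\in C_u$ we use $w\in C_{u,i}$, so that $|w|\le|P_i^sw|+|P_i^uw|\le(1+\theta)|P_i^uw|$. Since $\|P_i^u\|\lesssim1$ by \eqref{eq:projector-geometry}, we get $|A_iw|\gtrsim|P_i^uA_iw|$. By the minimal singular value bound \eqref{eq:unstable-min-singular} on $E_i^u$, and commutation of $P_i^u$ with $A_i$,
\[
|P_i^uA_iw|\gtrsim\alpha|P_i^uw|\gtrsim\alpha|w|.
\]
Similarly, for $w\in C_s$ we have $|w|\le(1+\theta)|P_i^sw|$, and
\[
|A_i^{-1}w|\gtrsim|P_i^sA_i^{-1}w|=|\lambda_{s,i}|^{-1}|P_i^sw|\gtrsim\alpha^3|w|,
\]
using Lemma \ref{lemma:eigenvalues}.

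\emph{Item (3).} For $w\perp e_1$ we have $P_i^sw=(n_i^s\cdot w)v_i^s$ with $|n_i^s\cdot w|=|(n_i^s-e_1)\cdot w|\lesssim\alpha^{-1}|w|$. Consequently $|P_i^uw|\ge|w|-C\alpha^{-1}|w|$, so $|P_i^sw|\le\theta|P_i^uw|$ for large $\alpha$, and $e_1^\perp\subset C_u$. For $e_1$ itself, $|n_i^s\cdot e_1-1|\lesssim\alpha^{-1}$ and $|v_i^s-e_1|\lesssim\alpha^{-1}$. This gives $|P_i^ue_1|\lesssim\alpha^{-1}$ and $|P_i^se_1|\ge1-C\alpha^{-1}$. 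By homogeneity, $\spann\{e_1\}\subset C_s$.

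\emph{Main obstacle.} All steps are routine once the earlier lemmas are in hand. The one point needing care is item (1). There I must make sure the error $\alpha^{-1}$ introduced by switching between the eight splittings is absorbed strictly below the fixed aperture $\theta$, with constants uniform in $i,j$. I must also make sure that the strict inequalities really place the image in the interior of the intersection cone. Both are handled by the choice of $\alpha_0$ and the openness argument above.
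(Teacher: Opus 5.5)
Your proposal is correct and follows essentially the same route as the paper. Item (1) combines the opening contraction of Lemma \ref{lemma:opening} with the proximity Lemma \ref{lem:proximity}, item (2) uses the unstable minimal singular value and the stable eigenvalue, and item (3) uses the near-alignment of $v_i^s,n_i^s$ with $e_1$. The only minor differences are that in item (2) you bound $|A_iw|\gtrsim|P_i^uA_iw|$ via $\|P_i^u\|\lesssim1$ rather than the paper's triangle inequality $|A_iw|\geq|A_iP_i^uw|-|\lambda_sP_i^sw|$, and that you spell out the nonvanishing of $P_i^uw$ and the openness argument placing $A_iw$ in $\interior C_u$, which the paper leaves implicit.
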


We remark that we denote by $e_1^\perp$ the hyperplane of $\R^3$ that is perpendicular to $e_1$, meaning $e_1^\perp = \{v\in\R^3:e_1\cdot v = 0\}$, which is a two-dimensional subspace.

\begin{proof}
Let \(w\in C_u\setminus\{0\}\) so Lemma \ref{lemma:opening} and the definition of the cones \eqref{eq:local-cones} first gives that there exists a constant $C>0$ such that
\[
\frac{|P_i^sA_iw|}{|P_i^uA_iw|} \leq C\theta\alpha^{-4}=:\eps_\alpha.
\]
For large $\alpha$, $\eps_\alpha>0$ becomes small, so Lemma \ref{lem:proximity} applied with $\eps=\eps_\alpha$ gives for every $1\leq j\leq 8$,
\[
\frac{|P_j^sA_iw|}{|P_j^uA_iw|} \leq C(\alpha^{-4}+\alpha^{-1})<\theta
\]
provided that $\alpha$ is large. All eight inequalities are strict, proving the first inclusion in \emph{(1)}. The proof for \(A_i^{-1}C_s\) is the same, using \eqref{eq:opening-s} and again Lemma \ref{lem:proximity}.

Let now $w\in C_u$. The local splitting for $A_i$ and \eqref{eq:unstable-min-singular} give
\[
|A_iw| \geq|A_iP_i^uw|-|\lambda_sP_i^sw| \geq (c\alpha-C\theta\alpha^{-3})|P_i^uw|
\gtrsim \alpha|w|.
\]
Here we used \(|w|\leq(1+\theta)|P_i^uw|\).  Similarly, if \(w\in C_s\), using \(|w|\leq(1+\theta)|P_i^sw|\) we get 
\[
|A_i^{-1}w| \geq|\lambda_s|^{-1}|P_i^sw|
-\|(A_i^u)^{-1}\|\,|P_i^uw| \geq (c\alpha^3-C\theta\alpha^{-1})|P_i^sw|
\gtrsim \alpha^3|w|.
\]

Last we observe that if $w\in e_1^\perp = \{v\in\R^3:e_1\cdot v = 0\}$ then Lemma \ref{lemma:geometry-cones} gives
\[
|P_i^sw| = |n_i^s\cdot w| = |(n_i^s-e_1)\cdot w| \leq |n_i^s-e_1||w| \lesssim \alpha^{-1}|w|
\]
and $|P^u_iw|\gtrsim (1-\alpha^{-1})|w|$, thus for sufficiently large $\alpha$ there holds $e_1^\perp\subset C_u$. Likewise, we compute
\[
|P^u_ie_1| = |e_1-P_i^se_1| = |e_1-(n_i^s\cdot e_1)v_i^s| \leq |1-n_i^s\cdot e_1|+|n_i^s\cdot e_1||v_i^s-e_1|.
\]
Lemma \ref{lemma:geometry-cones} yields that the second term in the right-hand side is of order $\alpha^{-1}$, whereas the first term can be expanded
\[
|1-n_i^s\cdot e_1| = |(e_1-n_i^s)\cdot e_1| \leq |e_1-n_i^s| \lesssim \alpha^{-1},
\]
again due to Lemma \ref{lemma:geometry-cones}. All in all, $|P^u_ie_1|\lesssim \alpha^{-1}$ and thus $|P^s_ie_1|\gtrsim 1-\alpha^{-1}$, which gives \(\spann\{e_1\}\subset C_s\).  The strict versions of these inequalities also show that both global cones have nonempty interior and hence positive three-dimensional measure.
\end{proof}

This proposition has the following direct consequence.

\begin{corollary}\label{cor:iterated}
For any $\alpha$ sufficiently large, there exist \(\Lambda_u\asymp \alpha>1\) and \(\Lambda_s \asymp \alpha^3 >1\) such that for all \(\xvec\in\mathcal R\) and all \(n\geq1\),
\[
DT^n(\xvec)(C_u\setminus\{0\})\subset\interior C_u \quad \text{and} \quad |DT^n(\xvec)w|\geq\Lambda_u^n|w| \quad \forall w\in C_u,
\]
\[
DT^{-n}(\xvec)(C_s\setminus\{0\})\subset\interior C_s, \quad \text{and} \quad |DT^{-n}(\xvec)w|\geq\Lambda_s^n|w| \quad\forall w\in C_s.
\]
\end{corollary}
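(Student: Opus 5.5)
The plan is to iterate Proposition \ref{prop:cones} along orbits using the chain rule. For $\xvec\in\mathcal R$, every iterate $T^k(\xvec)$, $k\in\Z$, lies off the singular set $\cS$. Hence it lies in one of the eight open smoothness regions $\mathcal R_{\eps_1,\eps_2,\eps_3}$, so $DT(T^k\xvec)=A_{i_k}$ for some index $i_k\in\{1,\dots,8\}$, by \eqref{eq:gradT}. Since $T$ is affine on a neighbourhood of each such point, the chain rule holds without ambiguity:
\[
DT^n(\xvec)=A_{i_{n-1}}\cdots A_{i_0},\qquad DT^{-n}(\xvec)=A_{i_{-n}}^{-1}\cdots A_{i_{-1}}^{-1},
\]
where we use $DT^{-1}(\yvec)=[DT(T^{-1}\yvec)]^{-1}$ and the fact that $T^{-1}\yvec\notin\cS$ for $\yvec\in\mathcal R$.

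For the unstable statement we argue by induction on $n$. The key point is that Proposition \ref{prop:cones}(1) holds for every $i$ with the same global cone $C_u$. So if $w\in C_u\setminus\{0\}$, then $A_{i_0}w\in\interior C_u\subset C_u\setminus\{0\}$, and applying the next matrix keeps the vector in $\interior C_u$. The growth estimate goes the same way. Proposition \ref{prop:cones}(2) provides a constant $c>0$, independent of $\alpha$ and $i$, with $|A_iw|\geq c\alpha|w|$ for all $w\in C_u$. Set $\Lambda_u=c\alpha$, which exceeds $1$ once $\alpha$ is large. Since each partial product $A_{i_{k-1}}\cdots A_{i_0}w$ stays in $C_u$, we may apply this bound $n$ times to get $|DT^n(\xvec)w|\geq\Lambda_u^n|w|$.

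The stable case is identical. We use the invariance $A_i^{-1}(C_s\setminus\{0\})\subset\interior C_s$ and the bound $|A_i^{-1}w|\geq c'\alpha^3|w|$ on $C_s$, and set $\Lambda_s=c'\alpha^3$. The inductive step then applies $A_{i_{-k}}^{-1}$ to a vector that already lies in $C_s$.

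The only point needing care is the justification that the chain-rule factorization is valid at every step, which is exactly why we restrict to $\xvec$ in the full-orbit regular set $\mathcal R$ rather than merely $\xvec\notin\cS$. The negative iterates also need this condition, since $DT^{-1}$ is evaluated at points of the backward orbit. One should also note that the constants in Proposition \ref{prop:cones}(2) are uniform over the eight matrices, so $\Lambda_u\asymp\alpha$ and $\Lambda_s\asymp\alpha^3$ do not depend on $n$ or $\xvec$. Beyond this, the corollary is a direct consequence of the proposition, and I expect no real obstacle.
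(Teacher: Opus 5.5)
Your proposal is correct and follows the paper's own argument, which simply applies Proposition~\ref{prop:cones} successively along the regular orbit. You have spelled out the same steps: the chain-rule factorization of $DT^{\pm n}(\xvec)$ into products of the eight matrices $A_i^{\pm1}$ for $\xvec\in\mathcal R$, and an induction that uses the common global cones and the $i$-uniform constants of Proposition~\ref{prop:cones}(2).
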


\begin{proof}
Apply Proposition~\ref{prop:cones} successively along the regular orbit.
\end{proof}

\begin{proof}[Proof of Proposition \ref{prop:uniform-cone}]
The map \(T\) is a volume-preserving bi-Lipschitz homeomorphism, and \(\cS\) is negligible. By definition, \(C_u\) and \(C_s\) are closed and symmetric. If \(w\in C_u\cap C_s\), then, for any fixed \(1\leq i\leq 8\),
\[
|P_i^sw|\leq\theta|P_i^uw| \leq\theta^2|P_i^sw|.
\]
Since \(\theta<1\), both projected components vanish, and hence \(w=0\). Proposition \ref{prop:cones}, Corollary \ref{cor:iterated}, and the subspaces $L_s,L_u$ from Proposition \ref{prop:cones}\emph{(3)} verify every item in Definition \ref{def:cone}.
\end{proof}

\section{Hyperbolic splitting and conditional growth}
\label{s:hyperbolic-splitting}

In this section we use the uniform cone condition from Section \ref{s:cones} to prove that the map $T$ provides a measurable uniformly hyperbolic splitting of the tangent space (i.e.\ $\R^3$) in the classical sense. This result is customary, typically covered under certain smoothness assumptions in the standard literature about hyperbolic systems, see e.g.\ \cite{KatokHasselblatt95}*{Section 6.4}. We nevertheless include here some details because $T$ is only piecewise $C^1$, the full-orbit regular set $\mathcal{R}$ is not compact, and we require estimates uniform in all eight smoothness regions.

Let us introduce the terminology required to make the arguments in this section rigorous. For \(k\in\{1,2\}\), the Grassmannian
\[
\mathrm{Gr}_k(\R^3)=\{F\subset\R^3:F\text{ is a linear subspace and }\dim F=k\}
\]
is the space of \(k\)-dimensional linear subspaces of \(\R^3\). Thus, \(\mathrm{Gr}_1(\R^3)\) is the space of unoriented lines through the origin, while \(\mathrm{Gr}_2(\R^3)\) is the space of planes through the origin.

We equip \(\mathrm{Gr}_k(\R^3)\) with the distance
\[
d_{\mathrm{Gr}}(F,G) = \|\Pi_F-\Pi_G\|,
\]
where \(\Pi_F\) denotes orthogonal projection $\Pi_F:\R^3\to F$. This distance measures the angle between the two subspaces. The Grassmannians are compact metric spaces.  In particular, every Cauchy sequence of \(k\)-dimensional subspaces converges to a
\(k\)-dimensional subspace. The orthogonal projections \(\Pi_F\) used here should be distinguished from the generally nonorthogonal spectral projections \(P_i^u,P_i^s\)
introduced in Section~\ref{s:cones}.

Every invertible linear map \(A:\R^3\to\R^3\) acts on the Grassmannians by
\[
F\mapsto AF=\{Av:v\in F\}.
\]
We define
\[
\mathscr G_u=\left\{F\in\mathrm{Gr}_2(\R^3):F\setminus\{0\}\subset C_u\right\},
\]
the family of two-dimensional subspaces entirely contained in the unstable cone, and
\[
\mathscr G_s=\left\{\ell\in\mathrm{Gr}_1(\R^3):\ell\setminus\{0\}\subset C_s\right\},
\]
the family of lines entirely contained in the stable cone. These families are nonempty because as shown in Proposition \ref{prop:cones},
\[
e_1^\perp\in\mathscr G_u, \quad \spann\{e_1\}\in\mathscr G_s.
\]
They are closed subsets of the corresponding Grassmannians and are therefore compact.

The first ingredient that we need to introduce is a graph representation formula. To understand this better let us fix one of the matrices \(A_i\) from one of the eight smoothness regions, and recall its invariant splitting
\[
\R^3=E_i^u\oplus E_i^s,
\]
which is a consequence of the computations in Lemma \ref{lemma:eigenvalues}. These subspaces are used only as coordinates, the orbit-dependent subspaces \(E^u(\xvec)\) and \(E^s(\xvec)\) will be constructed below.

If \(F\in\mathscr G_u\), then the restriction
\[
P_i^u|_F:F\to E_i^u
\]
is an isomorphism.  Indeed, if \(w\in F\) and \(P_i^uw=0\), then the local unstable-cone inequality gives
\[
|P_i^sw| \leq\theta|P_i^uw|=0,
\]
and hence \(w=0\). Since both \(F\) and \(E_i^u\) have dimension two, injectivity implies surjectivity. Consequently, there is a unique linear map
\[
L_F:E_i^u\to E_i^s,\quad L_F = P_i^s\circ(P_i^u|_F)^{-1},
\]
such that
\[
F = \graph(L_F) = \{v_u+L_Fv_u:v_u\in E_i^u\}.
\]
Moreover, since \(F\subset C_{u,i}\),
\[
\|L_F\|\leq\theta.
\]
Similarly, every \(\ell\in\mathscr G_s\) can be written uniquely as
\[
\ell = \graph(R_\ell) = \{v_s+R_\ell v_s:v_s\in E_i^s\},
\]
where
\[
R_\ell:E_i^s\to E_i^u, \quad \|R_\ell\|\leq\theta.
\]
We remark that on the set of graphs with slope bounded by $\theta$, convergence in operator norm of the graph maps is equivalent to convergence in $d_{\Gr}$. The constants are uniform because the splittings $E_i^u\oplus E_i^s$ are uniformly transverse.

\begin{lemma}\label{lem:graph-transform}
For all sufficiently large \(\alpha\), there exists \(q\in(0,1)\), with \(q\lesssim \alpha^{-4}\), for which every \(A_i\) of the form \eqref{eq:gradT} has the following property,
\[
A_i\mathscr G_u\subset\mathscr G_u, \quad A_i^{-1}\mathscr G_s\subset\mathscr G_s,
\]
and
\begin{equation}\label{eq:graph-transform-contraction}
\begin{split}
d_{\rm Gr}(A_iF,A_iG) & \leq q\,d_{\rm Gr}(F,G)
\quad \forall F,G\in\mathscr G_u,\\
d_{\rm Gr}(A_i^{-1}\ell,A_i^{-1}m) & \leq q\,d_{\rm Gr}(\ell,m) \quad \forall\ell,m\in\mathscr G_s.
\end{split}
\end{equation}
\end{lemma}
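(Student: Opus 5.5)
Invariance comes directly from Proposition~\ref{prop:cones}(1). If $F\in\mathscr G_u$, then $F\setminus\{0\}\subset C_u$, so $A_iF\setminus\{0\}=A_i(F\setminus\{0\})\subset\interior C_u\subset C_u$. Since $A_i$ is invertible, $A_iF$ is still two-dimensional, hence $A_iF\in\mathscr G_u$. The same argument with $A_i^{-1}$ and $C_s$ shows $A_i^{-1}\mathscr G_s\subset\mathscr G_s$.

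For the contraction we work in the graph coordinates adapted to the fixed matrix $A_i$, namely the splitting $E_i^u\oplus E_i^s$. Write $F=\graph(L_F)$ and $G=\graph(L_G)$ with $\|L_F\|,\|L_G\|\le\theta$. Since $A_i$ commutes with $P_i^u$ and $P_i^s$, a vector $v_u+L_Fv_u$ is mapped to $A_i^uv_u+\lambda_{s,i}L_Fv_u$, where $A_i^u=A_i|_{E_i^u}$. Reparametrizing by $w_u=A_i^uv_u$ gives the exact formula
\[
L_{A_iF}=\lambda_{s,i}\,L_F\,(A_i^u)^{-1}.
\]
By Lemma~\ref{lemma:eigenvalues} and \eqref{eq:unstable-min-singular-2},
\[
\|L_{A_iF}-L_{A_iG}\|\le|\lambda_{s,i}|\,\|(A_i^u)^{-1}\|\,\|L_F-L_G\|\lesssim\alpha^{-4}\|L_F-L_G\|.
\]
For the stable case, write $\ell=\graph(R_\ell)$. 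Then $A_i^{-1}$ maps $v_s+R_\ell v_s$ to $\lambda_{s,i}^{-1}v_s+(A_i^u)^{-1}R_\ell v_s$, so
\[
R_{A_i^{-1}\ell}=\lambda_{s,i}\,(A_i^u)^{-1}R_\ell,
\]
which yields the same factor $\lesssim\alpha^{-4}$.

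It remains to pass between $\|L_F-L_G\|$ and $d_{\Gr}(F,G)$. I claim there is a constant $K$, independent of $\alpha$ and $i$, such that for all graphs of slope at most $\theta$ over $E_i^u\oplus E_i^s$,
\[
K^{-1}\|L_F-L_G\|\le d_{\Gr}(F,G)\le K\|L_F-L_G\|.
\]
To see this, note that $\Pi_F$ can be written via the isomorphism $\Phi_F=I+L_F$ from $E_i^u$ onto $F$, namely
\[
\Pi_F=\Phi_F(\Phi_F^*\Phi_F)^{-1}\Phi_F^*\Pi_{E_i^u}\text{-type expressions},
\]
so $\Pi_F$ is a smooth function of $L_F$ on the compact ball $\|L\|\le\theta$; this gives the upper bound. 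For the lower bound, recover $L_F=P_i^s(P_i^u|_F)^{-1}$ from $\Pi_F$, which is possible because $P_i^u$ is uniformly invertible on $F$ by the cone inequality. Uniformity in $\alpha$ follows from Lemma~\ref{lemma:geometry-cones}: $v_i^s$ and $n_i^s$ are within $O(\alpha^{-1})$ of $e_1$, and $\|P_i^{s,u}\|\lesssim1$, so the splittings $E_i^u\oplus E_i^s$ stay $O(\alpha^{-1})$-close to the fixed orthogonal splitting $e_1^\perp\oplus\spann\{e_1\}$.

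Combining these, we obtain $d_{\Gr}(A_iF,A_iG)\le K^2C\alpha^{-4}\,d_{\Gr}(F,G)$. Hence $q=K^2C\alpha^{-4}$, which is less than $1$ for $\alpha$ large. The stable case is handled identically.

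The main obstacle is the graph-to-Grassmannian comparison with a constant $K$ that is uniform in $\alpha$. I plan to handle it cleanly by first proving the comparison for the orthogonal splitting $e_1^\perp\oplus\spann\{e_1\}$, where $\Pi_F$ is an explicit rational function of $L$. I then transfer it to the splitting $E_i^u\oplus E_i^s$ through the near-identity change of coordinates provided by Lemma~\ref{lemma:geometry-cones}; this change of coordinates maps slope-$\theta$ graphs to slope-$(\theta+O(\alpha^{-1}))$ graphs, which is still below $1$.
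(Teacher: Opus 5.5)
Your proposal is correct and follows essentially the same route as the paper. Both use the exact graph-transform formulas $L_{A_iF}=\lambda_{s,i}L_F(A_i^u)^{-1}$ and $R_{A_i^{-1}\ell}=\lambda_{s,i}(A_i^u)^{-1}R_\ell$ in the splitting $E_i^u\oplus E_i^s$, the bound $|\lambda_{s,i}|\,\|(A_i^u)^{-1}\|\lesssim\alpha^{-4}$, and an equivalence between $\|L_F-L_G\|$ and $d_{\rm Gr}(F,G)$ that is uniform in $i$ and $\alpha$ by Lemma~\ref{lemma:geometry-cones}. The only difference is that you spell out the invariance $A_i\mathscr G_u\subset\mathscr G_u$, $A_i^{-1}\mathscr G_s\subset\mathscr G_s$ from Proposition~\ref{prop:cones}(1) and sketch why the norm equivalence holds, whereas the paper only asserts both.
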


\begin{proof}
Fix \(i\) and use the invariant splitting \(\R^3=E_i^u\oplus E_i^s\). Every \(F\in\mathscr G_u\) is the graph of a map \(L_F:E_i^u\to E_i^s\) with \(\|L_F\|\leq\theta\). Its image is the graph of
\[
L_{A_iF}=\lambda_{s,i}L_F(A_i^u)^{-1},
\]
where $\lambda_{s,i}$ denotes the stable eigenvalue for the matrix $A_i$. By Lemma~\ref{lemma:eigenvalues} and \eqref{eq:unstable-min-singular-2},
\[
|\lambda_{s,i}|\,\|(A_i^u)^{-1}\|\lesssim \alpha^{-4}.
\]
On graphs of norm at most \(\theta\), graph norm and Grassmannian distance are uniformly equivalent. More precisely, there are uniform constants \(c,C>0\) such that
\[
c\|L_F-L_G\|
\leq d_{\rm Gr}(F,G)
\leq C\|L_F-L_G\|.
\]
The constants are uniform in \(i\) by Lemma~\ref{lemma:geometry-cones}.  Hence the contraction factor is of order
\[
\max_i(|\lambda_{s,i}|\,\|(A_i^u)^{-1}\|) \lesssim \alpha^{-4}.
\]
Every \(\ell\in\mathscr G_s\) is similarly the graph of a map \(R_\ell:E_i^s\to E_i^u\).  Under \(A_i^{-1}\), its graph transform is
\[
R_{A_i^{-1}\ell} =\lambda_{s,i}(A_i^u)^{-1}R_\ell,
\]
which gives the second estimate.
\end{proof}

Having introduced the Grassmannian and graph language, and with the contraction given by Lemma \ref{lem:graph-transform} in our belt, we can formulate the rigorous measurable uniformly hyperbolic splitting given by iterations of the time-one map $T$ from \eqref{eq:shears}.

Let \(\xvec\in\mathcal R\) and write
\[
A(\xvec)=DT(\xvec), \quad A^{(n)}(\xvec) = A(T^{n-1}\xvec)\cdots A(T\xvec)A(\xvec) = DT^n(\xvec)\footnote{Unless otherwise is specified, throughout this paper we write $DT^n(\xvec)=D(T^n)(\xvec)$ as defined here. To make a distinction with the $n$-th power of the matrix we write $DT(\xvec)^n$.}.
\]
We use the convention \(A^{(0)}=I\).

\begin{proposition}[Uniformly hyperbolic splitting]\label{prop:measurable-splitting}
There exist measurable maps
\[
\xvec\mapsto E^u(\xvec)\in{\rm Gr}_2(\R^3), \quad \xvec\mapsto E^s(\xvec)\in{\rm Gr}_1(\R^3)
\]
defined for all $\xvec\in\mathcal R$ such that
\begin{equation}\label{eq:bundle-direct-sum}
\R^3=E^u(\xvec)\oplus E^s(\xvec),
\end{equation}
\[
A(\xvec)E^u(\xvec)=E^u(T\xvec), \quad A(\xvec)E^s(\xvec)=E^s(T\xvec),
\]
\[
E^u(\xvec)\setminus\{0\}\subset \interior C_u, \quad E^s(\xvec)\setminus\{0\}\subset \interior C_s.
\]
Moreover, we have the expansion and contraction estimates
\begin{equation}\label{eq:bundle-expansion}
|A^{(n)}(\xvec)v_u| \geq\Lambda_u^n|v_u|, \quad \forall v_u\in E^u(\xvec), 
\end{equation}
\begin{equation}\label{eq:bundle-contraction}
|A^{(n)}(\xvec)v_s| \leq\Lambda_s^{-n}|v_s|, \quad \forall v_s\in E^s(\xvec),
\end{equation}
for all \(n\geq0\), and where the constants $\Lambda_u,\Lambda_s>1$ are defined in Corollary \ref{cor:iterated}. The splitting is uniformly transverse, i.e.\ there exists \(\kappa>0\) such
that
\begin{equation}\label{eq:uniform-transversality}
|v_u+v_s| \geq\kappa\max\{|v_u|,|v_s|\}
\end{equation}
for all \(\xvec\in\mathcal{R}\), \(v_u\in E^u(\xvec)\) and \(v_s\in E^s(\xvec)\).
\end{proposition}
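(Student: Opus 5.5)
We construct the bundles by the standard graph-transform (cone-field) argument, using Lemma \ref{lem:graph-transform} as the contraction engine and working pointwise along regular orbits. For \(\xvec\in\mathcal R\) and \(n\geq1\), set
\[
F_n(\xvec)=A^{(n)}(T^{-n}\xvec)\,e_1^\perp,\qquad \ell_n(\xvec)=\bigl(A^{(n)}(\xvec)\bigr)^{-1}\spann\{e_1\}.
\]
By Proposition \ref{prop:cones}(3) we have \(e_1^\perp\in\mathscr G_u\) and \(\spann\{e_1\}\in\mathscr G_s\). Lemma \ref{lem:graph-transform}, applied to the matrices \(A(T^{-k}\xvec)\), which are among the \(A_i\) since the orbit stays in the regular set, shows that \(F_n(\xvec)\in\mathscr G_u\) and \(\ell_n(\xvec)\in\mathscr G_s\).

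Since \(F_{n+1}(\xvec)=A^{(n)}(T^{-n}\xvec)\,[A(T^{-n-1}\xvec)e_1^\perp]\) and both \(e_1^\perp\) and \(A(T^{-n-1}\xvec)e_1^\perp\) lie in \(\mathscr G_u\), iterating the contraction \eqref{eq:graph-transform-contraction} gives
\[
d_{\rm Gr}(F_{n+1}(\xvec),F_n(\xvec))\leq q^n\operatorname{diam}\mathscr G_u .
\]
Hence \((F_n(\xvec))_n\) is Cauchy. Because \(\mathscr G_u\) is compact, it converges to some \(E^u(\xvec)\in\mathscr G_u\). The same argument gives \(\ell_n(\xvec)\to E^s(\xvec)\in\mathscr G_s\).

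Each \(F_n\) and \(\ell_n\) is a finite composition of \(T^{\pm1}\) with piecewise constant matrix fields on the measurable partition into regions \(\mathcal R_{\eps_1,\eps_2,\eps_3}\). So each is a measurable map into the Grassmannian, and the pointwise limits are measurable.

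Invariance follows from the identity \(A(\xvec)F_n(\xvec)=F_{n+1}(T\xvec)\), since the action on \(\mathrm{Gr}_2\) is continuous for a fixed invertible matrix. Similarly \(A(\xvec)\ell_{n+1}(\xvec)=\ell_n(T\xvec)\), which passes to the limit. Interior containment comes from Proposition \ref{prop:cones}(1): \(E^u(\xvec)=A(T^{-1}\xvec)E^u(T^{-1}\xvec)\), and the image of \(C_u\setminus\{0\}\) under \(A(T^{-1}\xvec)\) lies in \(\interior C_u\). The stable case uses \(E^s(\xvec)=A(\xvec)^{-1}E^s(T\xvec)\).

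The direct sum \eqref{eq:bundle-direct-sum} holds because \(C_u\cap C_s=\{0\}\) and the dimensions add to \(3\). The expansion bound \eqref{eq:bundle-expansion} is immediate from Corollary \ref{cor:iterated}, since \(E^u(\xvec)\subset C_u\). For \eqref{eq:bundle-contraction}, take \(v_s\in E^s(\xvec)\). Then \(A^{(n)}(\xvec)v_s\in E^s(T^n\xvec)\subset C_s\), and Corollary \ref{cor:iterated} applied at \(T^n\xvec\in\mathcal R\) gives \(|v_s|=|DT^{-n}(T^n\xvec)A^{(n)}(\xvec)v_s|\geq\Lambda_s^n|A^{(n)}(\xvec)v_s|\).

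The remaining and most delicate step is the uniform transversality \eqref{eq:uniform-transversality}, which must not depend on \(\xvec\). Fix any \(i\), say \(i=1\). For \(v_u\in C_u\) we have \(|P_1^sv_u|\leq\theta|P_1^uv_u|\), and for \(v_s\in C_s\) we have \(|P_1^uv_s|\leq\theta|P_1^sv_s|\). Then
\[
|P_1^u(v_u+v_s)|\geq|P_1^uv_u|-\theta|P_1^sv_s|,\qquad |P_1^s(v_u+v_s)|\geq|P_1^sv_s|-\theta|P_1^uv_u|.
\]
Adding the two gives
\[
(1-\theta)\bigl(|P_1^uv_u|+|P_1^sv_s|\bigr)\leq 2\|P_1^{u}\|\,\|P_1^{s}\|\text{-scaled}\;|v_u+v_s|,
\]
where precisely \(|P_1^u w|+|P_1^s w|\leq(\|P_1^u\|+\|P_1^s\|)|w|\) with \(w=v_u+v_s\).

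Finally, \(|v_u|\leq(1+\theta)|P_1^uv_u|\) and \(|v_s|\leq(1+\theta)|P_1^sv_s|\). Together with the uniform bound \(\|P_1^{s}\|+\|P_1^{u}\|\lesssim1\) from Lemma \ref{lemma:geometry-cones}, this yields \(\kappa\gtrsim(1-\theta)/(1+\theta)\), independent of \(\xvec\). The only point needing care is that all estimates use the fixed global cones and \(\theta<1\); that is exactly what makes \(\kappa\) uniform.
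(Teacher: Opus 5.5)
Your proof is correct, and apart from one step it follows the paper's argument exactly. The shared steps are:
\begin{itemize}
\item the same pulled-back sequences $F_n(\xvec)=A^{(n)}(T^{-n}\xvec)F_\star$ and $\ell_n(\xvec)=[A^{(n)}(\xvec)]^{-1}\ell_\star$;
\item the same Cauchy estimate from Lemma \ref{lem:graph-transform};
\item invariance from $A(\xvec)F_n(\xvec)=F_{n+1}(T\xvec)$ and $A(\xvec)\ell_{n+1}(\xvec)=\ell_n(T\xvec)$;
\item interior inclusions by first using closedness and then applying invariance (you spell this out as $E^u(\xvec)=A(T^{-1}\xvec)E^u(T^{-1}\xvec)$);
\item contraction on $E^s$ from the inverse estimate of Corollary \ref{cor:iterated} applied at $T^n\xvec$.
\end{itemize}

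The genuine difference is uniform transversality. The paper argues softly: the unit sections of $C_u$ and $C_s$ are disjoint compact subsets of the sphere, so they are at positive angle. That gives some $\kappa>0$ for each fixed $\alpha$, but no information on how $\kappa$ depends on $\alpha$, since the global cones themselves change with $\alpha$.

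Your projector estimate is more quantitative. Your displayed inequality is garbled and should read $(1-\theta)\bigl(|P_1^uv_u|+|P_1^sv_s|\bigr)\le(\|P_1^u\|+\|P_1^s\|)\,|v_u+v_s|$. With that correction it yields the explicit constant $\kappa=(1-\theta)/\bigl((1+\theta)(\|P_1^u\|+\|P_1^s\|)\bigr)$. By Lemma \ref{lemma:geometry-cones} and the $\alpha$-independent choice of $\theta$, this is bounded below uniformly in $\alpha$.

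This uniformity is exactly what Remark \ref{rmk:alpha-dep} claims when it says $\kappa$ becomes independent of $\alpha$. So your version supplies a justification that the paper's compactness argument leaves implicit, while the paper's argument is shorter but only qualitative.
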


\begin{proof}
Choose \(F_\star\in\mathscr G_u\) and \(\ell_\star\in\mathscr G_s\), for instance the subspaces from Proposition \ref{prop:cones}\emph{(3)}.  Set
\[
F_n(\xvec) = A^{(n)}(T^{-n}\xvec)F_\star \quad \text{and} \quad \ell_n(\xvec) = [A^{(n)}(\xvec)]^{-1}\ell_\star.
\]
Writing $\Gamma_{\xvec}(F) = A(\xvec)F$, we have 
\[
F_n(\xvec) = \Gamma_{T^{-1}\xvec}\circ \hdots \circ \Gamma_{T^{-n}\xvec}(F_\star),
\]
\[
F_{n+1}(\xvec) = \Gamma_{T^{-1}\xvec}\circ \hdots \circ \Gamma_{T^{-n}\xvec}(\Gamma_{T^{-(n+1)}\xvec}(F_\star)),
\]
for the $n$ and $n+1$ iterations respectively. By Lemma \ref{lem:graph-transform}, there exists $q\in (0,1)$ so that the common $n$-fold composition is $q^n$-Lipschitz, therefore
\[
d_{\rm Gr}(F_{n+1}(\xvec),F_n(\xvec)) \leq q^n\operatorname{diam}({\rm Gr}_2(\R^3)).
\]
For any $m>n$, the triangle inequality and the preceding estimate give
\[
d_{\mathrm{Gr}}(F_m(\xvec),F_n(\xvec)) \leq \sum_{k=n}^{m-1} d_{\mathrm{Gr}}(F_{k+1}(\xvec),F_k(\xvec)) \leq \operatorname{diam}(\mathrm{Gr}_2(\mathbb R^3)) \sum_{k=n}^{m-1}q^k \leq \operatorname{diam}(\mathrm{Gr}_2(\mathbb R^3)) \frac{q^n}{1-q}.
\]
The right-hand side is independent of $\xvec$ and tends to zero as $n\to\infty$. Thus $(F_n)_n$ is uniformly Cauchy. Since $\mathrm{Gr}_2(\mathbb R^3)$ is complete (in fact compact), $F_n$ converges uniformly in $\xvec$ to a map that we denote by $E^u(\xvec)$. The same argument gives, for every $m>n$,
\[
d_{\mathrm{Gr}}(\ell_m(\xvec),\ell_n(\xvec)) \leq \operatorname{diam}(\mathrm{Gr}_1(\mathbb R^3))\frac{q^n} {1-q},
\]
and hence $\ell_n$ converges uniformly in $\xvec$ to a map that we denote by $E^s(\xvec)$.
Finite products depend measurably on \(\xvec\), so the limits are measurable. The identities
\[
A(\xvec)F_n(\xvec)=F_{n+1}(T\xvec), \quad A(\xvec)\ell_{n+1}(\xvec)=\ell_n(T\xvec)
\]
give invariance after passing to the limit. The cone inclusions follow initially from closedness, i.e.\ $E^u(\xvec)\subset C_u$ and $E^s(\xvec)\subset C_s$. Once invariance has been established, strict inclusions upgrade to  
\[
E^u(\xvec)\setminus\{0\} \subset \interior C_u, \quad E^s(\xvec)\setminus\{0\} \subset \interior C_s.
\]
Since \(C_u\cap C_s=\{0\}\) and the dimensions add to three, \eqref{eq:bundle-direct-sum} follows.  The unit sections of the two global cones are disjoint compact subsets of the sphere, hence their positive angular separation gives \eqref{eq:uniform-transversality}. 
Estimate \eqref{eq:bundle-expansion} is a direct consequence of Corollary \ref{cor:iterated}. If \(w_s=A^{(n)}(\xvec)v_s\), then invariance gives \(w_s\in E^s(T^n\xvec)\subset C_s\). Applying the inverse estimate of Corollary \ref{cor:iterated} at \(T^n\xvec\) gives
\[
|v_s| =|[A^{(n)}(\xvec)]^{-1}w_s| \geq\Lambda_s^n|w_s|,
\]
which is \eqref{eq:bundle-contraction}.
\end{proof}

Once the uniformly hyperbolic splitting $\R^3 = E^s(\xvec)\oplus E^u(\xvec)$ has been established, we are in position to define the projectors
\begin{equation}\label{eq:projectors}
\begin{split}
    P^s(\xvec):\R^3\to E^s(\xvec), & \quad \ker P^s(\xvec) = E^u(\xvec),\\
    P^u(\xvec):\R^3\to E^u(\xvec), & \quad \ker P^u(\xvec) = E^s(\xvec).
\end{split}
\end{equation}
for any $\xvec\in\mathcal{R}$. By Proposition \ref{prop:measurable-splitting} they are measurable maps, and by uniform transversality,
\[
\esssup_{\xvec\in\T^3} (\|P^u(\xvec)\|+\|P^s(\xvec)\|)<\infty.
\]
We define them arbitrarily on the negligible complement of \(\mathcal R\).

\subsection{Conditional exponential growth}\label{s:conditional}

Once we have settled that the velocity field \eqref{eq:velocity} defines a uniformly hyperbolic map almost everywhere in $\T^3$, it follows that all initial data that is not strictly supported in the stable direction will grow exponentially fast in time. In this section we give details about this claim.

For any \(t\geq0\), the solution to the induction equation \eqref{eq:dynamo} is given by \eqref{eq:solutions} in terms of the flow map $\phi_t$ defined by $u$. In particular, a change of variables yields,
\[
B(t,\phi_t(\yvec)) = D\phi_t(\yvec)B_0(\yvec).
\]
Since \(\phi_t\) is volume-preserving and Lipschitz, given any $1\leq p\leq\infty$, it follows that if $B_0\in L^p(\T^3)$, then \(B(t,\cdot)\in L^p(\T^3)\). At integer times, we can directly look at iterations of the flow map $T$,
\[
B(n,T^n\yvec) =A^{(n)}(\yvec)B_0(\yvec),
\]
where we recall from the previous section that we set
\[
A(\xvec)=DT(\xvec), \quad A^{(n)}(\xvec) = A(T^{n-1}\xvec)\cdots A(T\xvec)A(\xvec) = DT^n(\xvec).
\]

First of all, we show that if the initial datum is divergence-free in the sense of distributions, the solution to the induction equation \eqref{eq:dynamo} remains divergence-free for all times.

\begin{lemma}\label{lem:divergence-preservation}
Let \(1\leq p\leq\infty\).  If \(B_0\in L^p(\T^3)\) and \(\div B_0=0\) in distributions, then so it is \(B(t,\cdot)\) as defined in \eqref{eq:solutions} for every \(t\geq0\).
\end{lemma}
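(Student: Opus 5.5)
We test the distributional divergence of $B(t,\cdot)$ against an arbitrary $\psi\in C^\infty(\T^3)$ and pull everything back to the initial configuration by the change of variables $\xvec=\phi_t(\yvec)$. This is legitimate because $\phi_t$ is a volume-preserving bi-Lipschitz homeomorphism for each fixed $t$: it is a finite composition of the shears \eqref{eq:shears}, or of their partial-time versions $(x,y,z)\mapsto(x+3\alpha s\,h(y),y,z)$ and similar, with $s\in[0,1/3]$. Using \eqref{eq:solutions} we get
\[
\int_{\T^3}B(t,\xvec)\cdot\nabla\psi(\xvec)\dd\xvec
=\int_{\T^3}D\phi_t(\yvec)B_0(\yvec)\cdot\nabla\psi(\phi_t(\yvec))\dd\yvec
=\int_{\T^3}B_0(\yvec)\cdot D\phi_t(\yvec)^\top\nabla\psi(\phi_t(\yvec))\dd\yvec .
\]
By the chain rule for Lipschitz maps composed with smooth functions (valid a.e.), $D\phi_t(\yvec)^\top\nabla\psi(\phi_t(\yvec))=\nabla(\psi\circ\phi_t)(\yvec)$. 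The right-hand side is therefore $\int B_0\cdot\nabla(\psi\circ\phi_t)$, and $\psi\circ\phi_t\in W^{1,\infty}(\T^3)$. The integrand is integrable, since $B_0\in L^p\subset L^1$ and $D\phi_t$ is bounded.

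It remains to show that $\int_{\T^3} B_0\cdot\nabla\varphi=0$ for every Lipschitz $\varphi$, not only smooth ones. We mollify: set $\varphi_\eps=\varphi*\rho_\eps$, so that $\nabla\varphi_\eps=(\nabla\varphi)*\rho_\eps$ is bounded uniformly by $\Lip(\varphi)$ and converges to $\nabla\varphi$ almost everywhere. Since $B_0\in L^1(\T^3)$, dominated convergence gives $\int B_0\cdot\nabla\varphi_\eps\to\int B_0\cdot\nabla\varphi$. The left-hand side vanishes for every $\eps$ because $\div B_0=0$ in distributions. For $p<\infty$ one could alternatively use weak-$*$/strong duality, but the $L^1$ argument covers all $1\leq p\leq\infty$ at once since $\T^3$ has finite measure.

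The main point needing care is the first step, namely justifying the change of variables and the a.e. chain rule for the merely piecewise-$C^1$ flow. I would handle it through the explicit structure of the flow. $D\phi_t$ exists off a finite union of Lipschitz hypersurfaces, so off a null set. The map $\phi_t$ has unit Jacobian determinant everywhere off that set, since each shear factor is unipotent, and it is a bijection, so the area formula applies. The identity $\nabla(\psi\circ\phi_t)=D\phi_t^\top(\nabla\psi)\circ\phi_t$ holds pointwise wherever $\phi_t$ is differentiable, hence a.e. Combining the three steps gives $\div B(t,\cdot)=0$ in distributions for every $t\geq0$.
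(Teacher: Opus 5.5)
Your proof is correct and follows the paper's argument essentially verbatim. You change variables $\xvec=\phi_t(\yvec)$, use the a.e.\ chain rule to rewrite the pairing as $\int B_0\cdot\nabla(\psi\circ\phi_t)$, and justify testing against the Lipschitz function $\psi\circ\phi_t$ by mollification and dominated convergence; you just spell out the justifications the paper leaves implicit (area formula, unit Jacobian, a.e.\ differentiability of the composed shears).
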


\begin{proof}
For \(\varphi\in C^\infty(\T^3)\), change variables \(\xvec=\phi_t(\yvec)\) and use the chain rule,
\[
\int_{\T^3} B(t,\xvec)\cdot\nabla\varphi(\xvec)\dd{\xvec} = \int_{\T^3}D\phi_t(\yvec)B_0(\yvec) \cdot\nabla\varphi(\phi_t(\yvec))\dd{\yvec} = \int_{\T^3}B_0(\yvec)\cdot \nabla(\varphi\circ\phi_t)(\yvec)\dd{\yvec}=0.
\]
The last test function is Lipschitz. Smooth approximation and dominated convergence justify its use in the distributional identity.
\end{proof}

Without further ado, let us show that the uniform hyperbolic splitting provided by $u_\alpha$ yields that solutions to the induction equation starting from initial data that is not fully supported in the stable bundle will grow exponentially.

\begin{proposition}\label{prop:conditional-growth}
Let $P^u$ be the projector onto the measurable space $E^u(\xvec)$ provided by \eqref{eq:projectors} and Proposition \ref{prop:measurable-splitting}. For every \(1\leq p\leq\infty\) and every \(B_0\in L^p(\T^3)\), there holds
\begin{equation}\label{eq:integer-growth}
\|B(n,\cdot)\|_{L^p} \geq \kappa\Lambda_u^n\|P^uB_0\|_{L^p}, \quad \forall n\in\N.
\end{equation}
Constants $\kappa,\Lambda_u>0$ come from Proposition \ref{prop:measurable-splitting}.
Moreover, there exists \(C_0>0\) such that
\[
\|B(t,\cdot)\|_{L^p} \geq C_0\e^{(\log\Lambda_u)t}\|P^uB_0\|_{L^p}, \quad \forall t\in\R_+.
\]
\end{proposition}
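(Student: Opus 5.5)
The plan is to prove the integer-time estimate \eqref{eq:integer-growth} pointwise along orbits, integrate using volume preservation, and then pass to real times with a uniform Lipschitz bound on the flow over one period.

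\textbf{Integer times.} Fix $\yvec\in\mathcal R$, which has full measure, and decompose $B_0(\yvec)=v_u+v_s$ with $v_u=P^u(\yvec)B_0(\yvec)\in E^u(\yvec)$ and $v_s=P^s(\yvec)B_0(\yvec)\in E^s(\yvec)$. By the invariance in Proposition \ref{prop:measurable-splitting},
\[
A^{(n)}(\yvec)B_0(\yvec)=A^{(n)}(\yvec)v_u+A^{(n)}(\yvec)v_s ,
\]
with $A^{(n)}(\yvec)v_u\in E^u(T^n\yvec)$ and $A^{(n)}(\yvec)v_s\in E^s(T^n\yvec)$. Applying the uniform transversality \eqref{eq:uniform-transversality} at the point $T^n\yvec$ and then the expansion estimate \eqref{eq:bundle-expansion}, we obtain
\[
|B(n,T^n\yvec)|=|A^{(n)}(\yvec)B_0(\yvec)|\geq\kappa|A^{(n)}(\yvec)v_u|\geq\kappa\Lambda_u^n|P^u(\yvec)B_0(\yvec)|.
\]
Since $T^n$ is a volume-preserving bijection, the change of variables $\xvec=T^n\yvec$ gives $\|B(n,\cdot)\|_{L^p}=\|B(n,T^n\cdot)\|_{L^p}$ for every $1\leq p\leq\infty$. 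For $p=\infty$ this holds because $T^n$ and its inverse preserve null sets. Taking $L^p$ norms in the pointwise bound yields \eqref{eq:integer-growth}. Measurability of $\yvec\mapsto P^u(\yvec)B_0(\yvec)$ follows from the measurability of the projectors.

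\textbf{Real times.} Write $t=n+\tau$ with $n=\lfloor t\rfloor$ and $\tau\in[0,1)$. By the flow property, $\phi_t=\phi_\tau\circ T^n$, because the velocity is one-periodic and $\phi_n=T^n$. Then
\[
B(t,\cdot)=(\phi_\tau)_\ast B(n,\cdot), \qquad B(t,\phi_\tau(\xvec))=D\phi_\tau(\xvec)B(n,\xvec).
\]
The map $\phi_\tau$ is a composition of at most three partial shears, with amplitudes $3\alpha\tau'\leq\alpha$. Hence its inverse is Lipschitz with a constant $M$ uniform in $\tau\in[0,1)$; explicitly, $M\lesssim\alpha^2$, as in \eqref{eq:inverse-DT}. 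This gives $|B(n,\xvec)|\leq\|D\phi_\tau(\xvec)^{-1}\|\,|B(t,\phi_\tau(\xvec))|\leq M|B(t,\phi_\tau(\xvec))|$ almost everywhere. Using once more that $\phi_\tau$ preserves volume, $\|B(t,\cdot)\|_{L^p}\geq M^{-1}\|B(n,\cdot)\|_{L^p}$.

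Combining with the integer-time estimate,
\[
\|B(t,\cdot)\|_{L^p}\geq M^{-1}\kappa\Lambda_u^{n}\|P^uB_0\|_{L^p}\geq M^{-1}\kappa\Lambda_u^{-1}\Lambda_u^{t}\|P^uB_0\|_{L^p},
\]
since $n>t-1$. So $C_0=\kappa/(M\Lambda_u)$ works, which scales like $\alpha^{-3}$ up to the size of $\kappa$.

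\textbf{Main obstacle.} The only delicate point is the almost-everywhere bookkeeping. The pointwise bound needs $T^n\yvec\in\mathcal R$, which holds by the $T$-invariance of $\mathcal R$. One also needs $D\phi_\tau$ to exist almost everywhere with the uniform inverse bound. This holds because $\phi_\tau$ is piecewise affine off a finite union of Lipschitz hypersurfaces for each fixed $\tau$.
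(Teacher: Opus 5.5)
Your proof is correct and follows the paper's argument essentially verbatim. At integer times you use transversality at $T^n\yvec$ plus expansion pointwise, then volume preservation. Between integers you use $\phi_{n+\tau}=\phi_\tau\circ T^n$ together with a uniform lower bound on $\sigma_{\min}(D\phi_\tau)$ over one period; your $M^{-1}$ is exactly the paper's $m_\star$.

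One side remark is off: by \eqref{eq:inverse-DT} the entry $1-abc$ has size $\alpha^3$, so $M\asymp\alpha^3$ rather than $M\lesssim\alpha^2$, and hence $C_0\asymp\alpha^{-4}$. This does not affect the proposition, which only needs $M<\infty$.
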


\begin{proof}
Write \(B_0=B_0^u+B_0^s\), where \(B_0^{u}=P^{u}B_0\) and \(B_0^s=P^sB_0\). Uniform transversality \eqref{eq:uniform-transversality} and expansion \eqref{eq:bundle-expansion} give, for almost every \(\yvec\in\T^3\),
\[
|A^{(n)}(\yvec)B_0(\yvec)| \geq\kappa|A^{(n)}(\yvec)B_0^u(\yvec)| \geq\kappa\Lambda_u^n|B_0^u(\yvec)|.
\]
Since \(T\) preserves Lebesgue measure, taking the \(L^p\) norm proves the growth estimate for integer times \eqref{eq:integer-growth}. For the interval between successive integers, let
\[
m_\star = \inf_{0\leq\tau\leq1} \operatorname*{ess\,inf}_{\yvec\in\T^3} \sigma_{\min}(D\phi_\tau(\yvec)),
\]
where $\sigma_{\min}(A) = \min_{|v|=1}|Av|$ denotes the smallest singular value of a matrix $A$. The inverse maps are uniformly Lipschitz on one period. More precisely, setting
\[
L_\star =\sup_{0\leq\tau\leq1}\operatorname{Lip}(\phi_\tau^{-1})<\infty,
\]
then $\sigma_{\min}(D\phi_\tau)\geq L_\star^{-1}$ almost everywhere, and thus \(m_\star>0\). By volume preservation we get
\[
\|D\phi_\tau F\|_{L^p}\geq m_\star\|F\|_{L^p}, \quad 0\leq\tau\leq1.
\]
Time-periodicity of the flow gives that if \(t=n+\tau\), we can write $\phi_{n+\tau} = \phi_\tau \circ T^n$. Using the result for integer times it follows that
\[
\|B(t,\cdot)\|_{L^p} \geq m_\star \|DT^n B_0\|_{L^p} \geq m_\star\kappa\Lambda_u^n\|P^uB_0\|_{L^p} \geq\frac{m_\star\kappa}{\Lambda_u}e^{(\log\Lambda_u)t}\|P^uB_0\|_{L^p}.
\]
Hence we arrive to the claim of the proposition.
\end{proof}

\begin{remark}\label{rmk:alpha-dep}
From the proof of Proposition \ref{prop:conditional-growth} we see that we have an explicit expression for the multiplicative constant
\[
C_0 = \frac{m_\star\kappa}{\Lambda_u}.
\]
From Proposition \ref{prop:measurable-splitting} we get that $\kappa>0$ becomes independent of $\alpha$, from Corollary \ref{cor:iterated} we see that $\Lambda_u \asymp \alpha$, and moreover $m_\star$ scales like the smallest singular value $\sigma_{\min} \asymp\alpha^{-3}$. In particular we see that $C_0\asymp \alpha^{-4}$ becomes very small as $\alpha$ grows, as one should expect.
\end{remark}

Proposition \ref{prop:conditional-growth} gives an exact obstruction to the universal result claimed in Theorem \ref{thm}. In particular, for any $1\leq p\leq\infty$ we can define the set
\begin{equation}\label{eq:stable-kernel}
\mathcal Z_p = \left\{ B:\T^3\to\R^3\mid B\in L^p(\T^3), \, \div B=0,\, B(\xvec)\in E^s(\xvec)\text{ for a.e.\ }\xvec \right\}.
\end{equation}
We call $\mathcal{Z}_p$ the \emph{stable kernel}.
For any fixed \(p\), Proposition \ref{prop:conditional-growth} implies that every non-zero divergence-free data in \(L^p\) grow exponentially if and only if the stable kernel is trivial, i.e.\ \(\mathcal Z_p=\{0\}\). Indeed, if we show that $\mathcal{Z}_p=\{0\}$ we readily see that Theorem \ref{thm} follows. Conversely, if \(0\neq B_0\in\mathcal Z_p\), then \eqref{eq:bundle-contraction} and volume preservation give
\begin{equation}\label{eq:decay}
\|B(n,\cdot)\|_{L^p} \leq\Lambda_s^{-n}\|B_0\|_{L^p},
\end{equation}
and hence the solution decays exponentially. In the remainder of this paper we will focus on proving that indeed $\mathcal{Z}_p=\{0\}$.

\begin{remark}\label{rmk:zero-mean}
One important comment at this point is that we can already reduce the problem to study divergence-free initial data that are mean-free. Indeed, the mean is a conserved quantity by the equation \eqref{eq:dynamo},
\[
\int_{\T^3}B(t,\xvec)\dd\xvec = \int_{\T^3}B_0(\xvec)\dd\xvec =: M \quad \forall t\geq 0.
\]
If $B_0\in \mathcal{Z}_p$, the contraction estimate \eqref{eq:decay} gives
\[
|M| = \left|\int_{\T^3}B(n,\xvec)\dd\xvec \right| \leq |\T^3|^{1-1/p}\|B(n,\cdot)\|_{L^p} \leq |\T^3|^{1-1/p}\Lambda_s^{-n}\|B_0\|_{L^p} \to 0 \quad \text{as } n\to \infty.
\]
This yields a contradiction unless $M=0$, i.e.\ unless the initial datum is mean-free.
\end{remark}

\section{Triviality of the stable kernel and proof of Theorem \ref{thm}}\label{s:L2-kernel}

In this section we will show that the stable kernel $\mathcal{Z}_p$ from \eqref{eq:stable-kernel} must only contain the trivial vector, completing the proof of Theorem \ref{thm}. The key ingredient is a bunching inequality that states that the Lipschitz norm of $T$ is strictly smaller than the strength of contraction in the stable direction. First of all, let us introduce some tools that will be useful in order to prove that $\mathcal{Z}_p=\{0\}$. 

Let $G:\T^3\to\T^3$ be an orientation and volume-preserving, bi-Lipschitz map, and let $\psi$ be a smooth vector field in $\T^3$. We say that $\psi_G$ defined by 
\begin{equation}\label{eq:covariant-Piola-definition}
\psi_G(G(\xvec)) = DG(\xvec)^{-\top}\psi(\xvec),
\end{equation}
is the \emph{covariant Piola transform} of $\psi$ by $G$. 
Here $DG(\xvec)^{-\top}$ denotes the inverse-transport matrix. In differential geometry language, this is typically called the ordinary pullback of a $1$-form. This is the natural transformation law for covectors, and is characterized by the pointwise duality identity
\[
DG(\xvec)v\cdot\psi_G(G(\xvec)) = v\cdot\psi(\xvec)
\]
for all $v\in\R^3$.  We refer to \cite{MarsdenHughes94}*{Chapter 1.7} for further information about the Piola transform.

The following result proves that the covariant Piola transform commutes with the curl.

\begin{lemma}\label{lemma:piola-transform}
Let $G:\T^3\to\T^3$ be an orientation and volume-preserving, bi-Lipschitz map, that is differentiable almost everywhere, and let $\psi$ be a smooth vector field in $\T^3$. Then, we have
\begin{equation}\label{eq:covariant-Piola-curl}
(\curl\psi_G)(G(\xvec)) = DG(\xvec)\curl\psi(\xvec),
\end{equation}
in distributions, where $\psi_G$ is defined in \eqref{eq:covariant-Piola-definition}.
\end{lemma}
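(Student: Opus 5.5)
We argue by duality: the identity is read as the transformation law of a closed $2$-form under pullback, and is verified by testing against smooth vector fields. Concretely, we will show that for every smooth test field $\Phi$ on $\T^3$
\[
\int_{\T^3}\psi_G(\zvec)\cdot\curl\Phi(\zvec)\dd\zvec=\int_{\T^3} DG(G^{-1}\zvec)\curl\psi(G^{-1}\zvec)\cdot\Phi(\zvec)\dd\zvec ,
\]
which is exactly \eqref{eq:covariant-Piola-curl} in distributions, with both sides regarded as functions of $\zvec=G(\xvec)$.

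\textbf{Change variables.} Substituting $\zvec=G(\xvec)$, which has unit Jacobian because $G$ is volume-preserving and bi-Lipschitz, the right-hand side becomes $\int \curl\psi(\xvec)\cdot DG(\xvec)^\top\Phi(G(\xvec))\dd\xvec$. On the left, the duality identity $DG(\xvec)v\cdot\psi_G(G\xvec)=v\cdot\psi(\xvec)$ turns it into $\int \psi(\xvec)\cdot DG(\xvec)^{-1}(\curl\Phi)(G\xvec)\dd\xvec$.

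\textbf{Identify the pulled-back fields.} Define the Lipschitz field $\Phi^G:=DG^\top(\Phi\circ G)$, the covariant pullback of $\Phi$. By integration by parts against the smooth field $\psi$, the claim reduces to the pointwise a.e.\ identity
\[
\curl\big(DG^\top(\Phi\circ G)\big)=DG^{-1}\,(\curl\Phi)\circ G .
\]
For $G$ smooth this is the classical formula
\[
\curl(DG^\top\Phi\circ G)=\operatorname{cof}(DG)^\top(\curl\Phi)\circ G ,
\]
obtained by writing $\Phi\cdot \dd\zvec$ as a $1$-form and using $d(G^*\omega)=G^*d\omega$. Since $\det DG=1$, we have $\operatorname{cof}(DG)^\top=\det(DG)\,DG^{-1}=DG^{-1}$, giving the displayed identity. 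The integration by parts $\int\psi\cdot\curl\Phi^G=\int\curl\psi\cdot\Phi^G$ is legitimate because $\Phi^G\in L^\infty$ and $\psi$ is smooth.

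\textbf{Main obstacle: low regularity.} $G$ is only bi-Lipschitz, so $DG$ is merely $L^\infty$ and $\Phi^G$ is not even Lipschitz; $\curl\Phi^G$ must therefore be interpreted distributionally. I would mollify $G_\eps=G*\rho_\eps$, treating $G$ through its lift to $\R^3$ so that this makes sense on $\T^3$. Then:
\begin{enumerate}
\item The identity $\int\psi\cdot DG_\eps^{-\top}\dots$ is cleaner in its unsimplified form $\int\psi\cdot\curl(DG_\eps^\top\Phi\circ G_\eps)=\int\psi\cdot\operatorname{cof}(DG_\eps)^\top\curl\Phi\circ G_\eps$, which involves no inverse.
\item On the left, after integration by parts the smooth $\curl\psi$ pairs with $DG_\eps^\top\Phi\circ G_\eps$, which converges in $L^1$ to $DG^\top\Phi\circ G$.
\item On the right, $\operatorname{cof}(DG_\eps)\to\operatorname{cof}(DG)$ a.e.\ with a uniform $L^\infty$ bound, so dominated convergence applies.
\item Finally, $\operatorname{cof}(DG)^\top=DG^{-1}$ a.e.\ because $\det DG=1$ a.e.
\end{enumerate}
Only after passing to the limit do we undo the change of variables, for the limit map $G$.

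Alternatively, one can bypass mollification. For the piecewise affine maps actually used later (compositions of the shears \eqref{eq:shears}), $\Phi^G$ is piecewise smooth, and the tangential component $DG^\top(\Phi\circ G)\times n$ is continuous across the singular interfaces, because $G$ is continuous there. Continuity of the tangential trace means no singular curl is concentrated on the interfaces, so the computation can be carried out region by region.
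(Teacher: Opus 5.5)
Your proposal is correct and follows essentially the same route as the paper. Both arguments test against smooth fields, change variables through $G$, and reduce everything to the auxiliary identity $\curl\bigl(DG^\top(\eta\circ G)\bigr)=DG^{-1}(\curl\eta)\circ G$, obtained from the cofactor (adjugate) formula together with $\det DG=1$. The only difference is that the paper computes this identity directly for Lipschitz $G$ via $\curl(f\nabla g)=\nabla f\times\nabla g$, whereas you derive it for smooth maps and pass to the limit by mollification in the inverse-free adjugate form, which makes the low-regularity justification more explicit than in the paper.
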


\begin{proof}
Given the map $G$, we set $M(\xvec)=DG(\xvec)$. Before proving \eqref{eq:covariant-Piola-curl}, let us first consider the following auxiliary identity
\begin{equation}\label{eq:auxiliary}
\curl (M^\top(\eta\circ G)) = M^{-1}(\curl\eta)\circ G
\end{equation}
for every smooth vector field $\eta\in C^\infty(\T^3)$, in distributions. Indeed, assuming \eqref{eq:auxiliary}, we see that \eqref{eq:covariant-Piola-curl} follows by integration by parts. Take $\eta\in C^\infty(\T^3)$, define $\yvec = G(\xvec)$, and compute
\[
\begin{split}
\int_{\T^3} \curl\psi_G \cdot\eta\dd\yvec & = \int_{\T^3} \psi_G\cdot\curl\eta\dd\yvec = \int_{\T^3} M(\xvec)^{-\top}\psi(\xvec)\cdot(\curl\eta)(G(\xvec))\dd\xvec \\
& = \int_{\T^3}\psi(\xvec)\cdot M(\xvec)^{-1}(\curl\eta)(G(\xvec))\dd\xvec.
\end{split}
\]
Now we use \eqref{eq:auxiliary} to obtain
\[
\begin{split}
\int_{\T^3} \curl\psi_G \cdot\eta\dd\yvec & = \int_{\T^3}\psi \cdot \curl (M^\top(\eta\circ G))\dd\xvec = \int_{\T^3} \curl\psi\cdot M^\top(\eta\circ G)\dd\xvec \\
& = \int_{\T^3} M\curl\psi\cdot (\eta\circ G)\dd\xvec = \int_{\T^3} (M\curl\psi)\circ G^{-1} \cdot \eta\dd\yvec
\end{split}
\]
From here it follows that $\curl\psi_G = (M\curl\psi)\circ G^{-1}$, i.e.\ \eqref{eq:covariant-Piola-curl}, in the sense of distributions. For completeness, we also prove \eqref{eq:auxiliary}. Following Einstein's convention for repeated indices we write
\[
M^\top(\eta\circ G) = (\eta_i\circ G)\nabla G_i.
\]
Now, since $\curl(f\nabla g) = \nabla f \times\nabla g$, the chain rule yields
\[
\begin{split}
\curl(M^\top(\eta\circ G)) & = ((\partial_j\eta_i)\circ G)\nabla G_j\times\nabla G_i\\
& = ((\curl \eta)_1\circ G)\nabla G_2\times\nabla G_3 + ((\curl \eta)_2\circ G)\nabla G_3\times\nabla G_1 \\
& \quad + ((\curl \eta)_3\circ G)\nabla G_1\times\nabla G_2.
\end{split}
\]
Because the rows of the matrix $M$ are $\nabla G_i$ with $1\leq i\leq 3$, and because $\det M = 1$, it follows that for any vector $w\in\R^3$ we can write
\[
M^{-1}w = w_1\nabla G_2\times\nabla G_3 + w_2\nabla G_3\times\nabla G_1 + w_3\nabla G_1\times\nabla G_2.
\]
So in particular $\curl(M^\top(\eta\circ G)) = M^{-1}(\curl\eta)\circ G$, and we get to the identity \eqref{eq:auxiliary}.
\end{proof}

With this tool in our bag, we can proceed with the central lemma of the section, that proves $\mathcal{Z}_p=\{0\}$ provided that a certain strict inequality holds true.

\begin{lemma}\label{lem:dual-PDE-exclusion}
Let \(T:\T^3\to\T^3\) be the map defined in \eqref{eq:shears}.  Let \(E^s(\xvec)\) be the measurable stable direction from Proposition \ref{prop:measurable-splitting}, which, we recall, has the property,
\[
|DT^n(\xvec)v| \leq\Lambda_s^{-n}|v| \quad \forall v\in E^s(\xvec),\ \forall n\geq0.
\]
at every point $\xvec\in\mathcal{R}$.  If there holds
\begin{equation}\label{eq:PDE-bunching}
\operatorname{Lip}(T)<\Lambda_s,
\end{equation}
then the only mean-free \(B_0\in L^p(\T^3)\), with fixed $1\leq p\leq \infty$, satisfying $\div B_0=0$ in distributions, and such that $B_0(\xvec)\in E^s(\xvec)$ for almost every $\xvec\in\T^3$, is the trivial vector field $B_0=0$.
\end{lemma}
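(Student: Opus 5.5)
The plan is a duality argument. Fix a mean-free, divergence-free $B_0\in L^p(\T^3)$ with $B_0(\xvec)\in E^s(\xvec)$ a.e., and set $B_n=B(n,\cdot)$. We will show that $\int_{\T^3}B_0\cdot\varphi\dd\xvec=0$ for every smooth vector field $\varphi$, which forces $B_0=0$.

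\emph{Step 1: transport the test field.} Let $\psi_n$ be the covariant Piola transform of $\varphi$ by $G=T^n$, as in \eqref{eq:covariant-Piola-definition}, so that $\psi_n(T^n\xvec)=DT^n(\xvec)^{-\top}\varphi(\xvec)$. The duality identity $DT^n(\xvec)v\cdot\psi_n(T^n\xvec)=v\cdot\varphi(\xvec)$, the formula $B_n(T^n\xvec)=DT^n(\xvec)B_0(\xvec)$ and volume preservation give
\[
\int_{\T^3}B_0\cdot\varphi\dd\xvec=\int_{\T^3}B_n\cdot\psi_n\dd\yvec .
\]
This alone is useless, since $\|\psi_n\|_{L^\infty}$ is controlled only by $\|(DT^n)^{-1}\|$, which can be as large as $\operatorname{Lip}(T)^{2n}$. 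The constraints $\div B_n=0$ (Lemma \ref{lem:divergence-preservation}) and $\int B_n=0$ (Remark \ref{rmk:zero-mean}) are what let us discard the bad part of $\psi_n$.

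\emph{Step 2: replace $\psi_n$ by a controlled field.} By Lemma \ref{lemma:piola-transform},
\[
(\curl\psi_n)(T^n\xvec)=DT^n(\xvec)\curl\varphi(\xvec),
\]
so $\|\curl\psi_n\|_{L^\infty}\leq\operatorname{Lip}(T)^n\|\curl\varphi\|_{L^\infty}$. The bound here involves $DT^n$ itself, not its inverse, and $\|DT^n\|\leq\operatorname{Lip}(T)^n$. Now take the Hodge decomposition on $\T^3$,
\[
\psi_n=\nabla f_n+c_n+\psi_n',
\]
where $c_n$ is a constant vector and $\psi_n'$ is the mean-free, divergence-free field (Biot--Savart) with $\curl\psi_n'=\curl\psi_n$. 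Elliptic estimates give $\|\psi_n'\|_{W^{1,r}}\lesssim_r\|\curl\psi_n\|_{L^r}$ for all $1<r<\infty$. Taking $r>3$ and using Morrey's embedding,
\[
\|\psi_n'\|_{L^\infty}\lesssim\|\curl\psi_n\|_{L^\infty}\leq C\operatorname{Lip}(T)^n\|\curl\varphi\|_{L^\infty},
\]
which controls $\|\psi_n'\|_{L^{p'}}$ for every $p$, including $p=1$. The constant part pairs to zero with $B_n$ because $B_n$ is mean-free. The gradient part also pairs to zero: $\psi_n\in L^\infty$ and $\psi_n'\in L^\infty$, so $f_n$ is Lipschitz, and a divergence-free $L^p$ field annihilates $\nabla f_n$ by the same mollification argument used in Lemma \ref{lem:divergence-preservation}.

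\emph{Step 3: conclude via bunching.} By \eqref{eq:bundle-contraction} and volume preservation, $\|B_n\|_{L^p}\leq\Lambda_s^{-n}\|B_0\|_{L^p}$, exactly as in \eqref{eq:decay}. Therefore
\[
\Big|\int_{\T^3}B_0\cdot\varphi\dd\xvec\Big|=\Big|\int_{\T^3}B_n\cdot\psi_n'\dd\yvec\Big|\leq C\|B_0\|_{L^p}\|\curl\varphi\|_{L^\infty}\Big(\frac{\operatorname{Lip}(T)}{\Lambda_s}\Big)^n\longrightarrow0
\]
by \eqref{eq:PDE-bunching}. Hence $B_0$ is orthogonal to every smooth $\varphi$, and $B_0=0$.

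The main obstacle is Step 2: making the Hodge decomposition of $\psi_n$ rigorous when $\psi_n$ is only $L^\infty$ and piecewise smooth, and getting an $L^\infty$ (not merely $L^r$) bound on $\psi_n'$ so that the endpoint $p=1$ is covered. If the $L^\infty$ bound fails, I would treat $1<p\leq\infty$ with the $L^{p'}$ Calder\'on--Zygmund bound, and reduce $p=1$ to $p>1$ by testing only against fields with $\curl\varphi$ smooth, using that the $W^{1,r}$ bound for $r>3$ already suffices.
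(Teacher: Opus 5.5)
Your proof is correct and is essentially the paper's argument. The paper takes the Coulomb potential $A_n=\curl(-\Delta)^{-1}B_n$ and writes $\int B_n\cdot\psi_n=\int A_n\cdot\curl\psi_n$; this equals your $\int B_n\cdot\psi_n'$ with $\psi_n'=\curl(-\Delta)^{-1}\curl\psi_n$, since the symmetric operator $\curl(-\Delta)^{-1}$ can be moved across the pairing, and it then concludes with the same Piola--curl identity, the bound $\|DT^n\|\leq\Lip(T)^n$, and contraction on $E^s$. The obstacle you flag is not one: the kernel of $\curl(-\Delta)^{-1}$ on $\T^3$ is $O(|\xvec|^{-2})$ and hence in $L^1$, so Young's inequality directly gives $\|\psi_n'\|_{L^\infty}\lesssim\|\curl\psi_n\|_{L^\infty}$, which covers $p=1$ without Morrey. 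This is the same kernel bound the paper uses for $\|A_n\|_{L^p}\lesssim\|B_n\|_{L^p}$.
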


For clarification, we recall that the Lipschitz norm of $T$ is defined by
\[
\Lip(T) = \esssup_{\xvec\in\T^3}\|DT(\xvec)\| = \esssup_{\xvec\in\T^3} \sup_{|v|=1}|DT(\xvec)v|,
\]
As it is the convention along this paper, the un-subscripted norm $\|\cdot\|$ denotes the operator norm.

\begin{proof}
Let $B_0\in L^p$ be as described in the claim of the lemma. In particular it is mean-free, and by Remark \ref{rmk:zero-mean} there holds that
\[
B_n = (DT^nB_0)\circ T^{-n}
\]
remains mean-free for all $n\geq 0$. Since $B_n$ is mean-free and $\div B_n = 0$, we can define the Coulomb vector potential
\begin{equation}\label{eq:Coulomb-potential}
A_n=\curl(-\Delta)^{-1}B_n,
\end{equation}
or equivalently $B_n = \curl A_n$.
The periodic convolution kernel \(K\) of \(\curl(-\Delta)^{-1}\) satisfies $|K(\xvec)|\lesssim |\xvec|^{-2}$ near the origin and is smooth away from it, see e.g.\ \cite{Aubin82}.  In particular, \(K\in L^1(\T^3)\). Young's inequality therefore gives, for any $1\leq p\leq\infty$,
\begin{equation}\label{eq:Coulomb-Lp-estimate}
\|A_n\|_{L^p}\lesssim \|B_n\|_{L^p}.
\end{equation}
Moreover, if $1<p<\infty$, one has $A_n\in W^{1,p}(\T^3)$.
Next we fix an arbitrary vector field $\psi_0\in C^\infty(\T^3)$, and define
\[
\psi_n = ((DT^n)^{-\top}\psi_0)\circ T^{-n},
\]
i.e.\ $\psi_n$ is the covariant Piola transform of $\psi_0$ by the bi-Lipschitz mapping $T^n$, or equivalently, the vector $\psi_0$ is transported covariantly by the flow $T^n$. As pointed out before, here $DT^n$ means $D(T^n)$, we just avoid introducing unnecessary parenthesis for a better presentation. Using the definitions of $B_n$ and $\psi_n$ together with the volume-preservation of $T$ it readily follows that
\begin{equation}\label{eq:transported-duality}
\int_{\T^3} B_n\cdot\psi_n\dd\xvec = \int_{\T^3} DT^nB_0 \cdot (DT^n)^{-\top}\psi_0\dd\xvec = \int_{\T^3} B_0\cdot\psi_0\dd\xvec
\end{equation}
remains constant for all $n\geq 0$. Identity \eqref{eq:transported-duality} expresses the conservation of the pairing between the vector and covector evolutions\footnote{If $\psi_0=A_0$, covariant transport gives $\curl\psi_n=B_n$. Hence, $\psi_n$ is a generally non-Coulomb vector potential of $B_n$, and \eqref{eq:transported-duality} represents the conservation of the magnetic helicity. Nonetheless, for this argument we require $\psi_0$ to be smooth so that we can use it as a test vector.}. Putting this relation together with the definition of the Coulomb potential \eqref{eq:Coulomb-potential} and integrating by parts we find
\[
\int_{\T^3} B_0\cdot\psi_0\dd\xvec= \int_{\T^3}B_n\cdot\psi_n\dd\xvec = \int_{\T^3}\curl A_n\cdot\psi_n\dd\xvec = \int_{\T^3}A_n\cdot\curl\psi_n\dd\xvec.
\]
This integration by parts is rigorous for $1<p<\infty$ since $A_n\in W^{1,p}(\T^3)$. For the endpoints $p=1$ and $p=\infty$ it follows right away after a suitable mollification.
Hölder's inequality and \eqref{eq:Coulomb-Lp-estimate} yield
\begin{equation}\label{eq:intermidiate-estimate}
\left|\int_{\T^3} B_0\cdot\psi_0\dd\xvec\right|\lesssim \|A_n\|_{L^p}\|\curl\psi_n\|_{L^{q}} \lesssim \|B_n\|_{L^p}\|\curl\psi_n\|_{L^{q}},
\end{equation}
where $q$ denotes the Hölder's conjugate of $p$. On the one hand, since by assumption $B_0(\xvec)\in E^s(\xvec)$ for almost every $\xvec\in\T^3$, Proposition \ref{prop:measurable-splitting} gives that $B_n(\xvec)\in E^s(\xvec)$ for all $n\geq 0$, and thus
\[
\|B_n\|_{L^p}\leq \Lambda_s^{-n}\|B_0\|_{L^p}.
\]
On the other hand, $\psi_n$ is a covariant Piola transform, and Lemma \ref{lemma:piola-transform} yields that 
\[
\curl\psi_n(T^n(\xvec)) = \curl \bigl(DT^n(\xvec)^{-\top}\psi_0(\xvec)\bigr) = DT^n(\xvec)\curl\psi_0(\xvec).
\]
Therefore, since $T^n$ is volume-preserving it follows
\[
\|\curl\psi_n\|_{L^q} \leq \esssup_{\xvec\in\T^3}\|DT^n(\xvec)\|\|\curl\psi_0\|_{L^q} = \Lip(T^n)\|\curl\psi_0\|_{L^q}.
\]
Additionally, we have the estimate $\Lip(T^n)\leq \Lip(T)^n$. Indeed, for every $\xvec\in\mathcal{R}$ we can write
\[
DT^n(\xvec)= D(T^n)(\xvec) = DT(T^{n-1}\xvec)\hdots DT(T\xvec)DT(\xvec),
\]
and thus, by submultiplicativity of the operator norm
\[
\|DT^n(\xvec)\| \leq \prod_{k=0}^{n-1}\|DT(T^k\xvec)\| \leq \left(\esssup_{\yvec\in\T^3} \|DT(\yvec)\|\right)^n,
\]
so there follows $\Lip(T^n)\leq \Lip(T)^n$. Combining all, we arrive at the estimate
\begin{equation}\label{eq:curl-dual-growth}
\|\curl\psi_n\|_{L^q} \leq \Lip (T)^n\|\curl\psi_0\|_{L^q}.
\end{equation}
Therefore, introducing both estimates in \eqref{eq:intermidiate-estimate} we get
\[
\left|\int_{\T^3} B_0\cdot\psi_0\dd\xvec\right|\lesssim \left(\frac{\Lip(T)}{\Lambda_s}\right)^n\|B_0\|_{L^p}\|\curl\psi_0\|_{L^q},
\]
which converges to zero as $n\to\infty$, assuming that the bunching condition from the claim of the lemma $\Lip(T)<\Lambda_s$ holds true. This gives that
\[
\int_{\T^3} B_0\cdot\psi_0\dd\xvec=0
\]
for any $\psi_0\in C^\infty(\T^3)$ and thus $B_0=0$.
\end{proof}

At this point we can put together all the pieces from the previous sections and conclude with the proof of the main theorem.

\begin{proof}[Proof of Theorem \ref{thm}]
The velocity field that we introduced in \eqref{eq:velocity} defines a time-one map $T$ that satisfies a uniform cone condition according to Proposition \ref{prop:cones}. With this, Proposition \ref{prop:measurable-splitting} yields that there is a measurable uniform hyperbolic splitting $\R^3 = E^s(\xvec)\oplus E^u(\xvec)$ for almost every $\xvec\in\T^3$, and hence there is a well-defined measurable projector $P^u$ onto the subspace $E^u(\xvec)$. Proposition \ref{prop:conditional-growth} then shows that solutions to the induction equation \eqref{eq:dynamo} satisfy
\[
\|B(t,\cdot)\|_{L^p} \geq C_0\e^{(\log\Lambda_u)t}\|P^uB_0\|_{L^p},
\]
i.e.\ they grow exponentially fast in $L^p$ for any $1\leq p\leq \infty$, provided that the initial datum satisfies $\|P^uB_0\|_{L^p}\neq 0$. Finally, Lemma \ref{lem:dual-PDE-exclusion} yields that the stable kernel $\mathcal{Z}_p=\{0\}$, which precisely implies that $\|P^uB_0\|_{L^p}>0$, provided that the time-one map satisfies the bunching inequality
\[
\Lip(T)<\Lambda_s.
\]
This is the last point that remains to be checked. Proposition \ref{prop:measurable-splitting} gives that the contraction rate $\Lambda_s$ along the stable direction $E^s(\xvec)$ is of order $\alpha^3$, meaning that there exists a constant $\delta>0$ such that $\Lambda_s\geq \delta\alpha^3$. Additionally, we can bound the Lipschitz norm of $T$ using its Frobenius norm, so using the definition \eqref{eq:gradT} we get
\[
\Lip(T) = \esssup_{\xvec\in\T^3}\sup_{|v|\leq 1} |DT(\xvec)v| \leq \left( \sum_{i,j}|DT(\xvec)_{i,j}|^2 \right)^{1/2} \leq \sqrt{\alpha^4+3\alpha^2+3},
\]
see also Remark \ref{rmk:lipT}. In particular
\[
\frac{\sqrt{\alpha^4+3\alpha^2+3}}{\delta\alpha^3} < 1
\]
holds true for all $\alpha$ sufficiently large, and thus the claim of the theorem follows.
\end{proof}

\begin{remark}
The key ingredient to show that there are no divergence-free vector fields in the stable bundle is the strict inequality $\Lip(T)<\Lambda_s$ from Lemma \ref{lem:dual-PDE-exclusion}. This holds true for large values of $\alpha$ since $\Lambda_s\asymp \alpha^3$ and $\Lip(T)\asymp\alpha^2$, and comes from the covariant Piola transform and estimate \eqref{eq:curl-dual-growth}. However, by construction, $\psi=\psi_n$ solves (at integer times) the covariant transport equation 
\[
\partial_t\psi + (u\cdot\nabla)\psi = -(\nabla u)^\top\psi.
\]
This on the other hand implies that $\curl\psi_n$ solves the induction equation \eqref{eq:dynamo}. Hence, one could naively try to develop $L^q$ energy estimates and get something similar to \eqref{eq:curl-dual-growth} directly from the PDE. However, via energy estimates we can only get
\[
\|\curl\psi_n\|_{L^q} \leq \e^{c_q\|\nabla u\|_{\infty} n}\|\curl\psi_0\|_{L^q},
\]
for some $c_q>0$, and where $\|\nabla u\|_{\infty}$ denotes the $L^\infty$ norm of $\nabla u$ both in time and space. According to \eqref{eq:velocity} we see that $\|\nabla u\|_\infty\asymp \alpha$, thus this estimate yields a prefactor that is exponential in $\alpha$, and that can never be controlled by $\Lambda_s\asymp\alpha^3$ for $\alpha$ large.
\end{remark}

\subsection{Non-coercivity of the unstable projector}\label{s:non-uniformity}

The proof of the main theorem is now complete. However, before finalising this note, we include a brief comment about the fact that, even if the unstable projection is injective on divergence-free fields, as follows from \(\mathcal Z_p=\{0\}\), it is not bounded below. In particular, from the arguments in this paper it does not follow that we can obtain universal growth estimates of the form
\[
\|B(t,\cdot)\|_{L^p} \gtrsim \e^{\lambda t}\|B_0\|_{L^p}.
\]
We could get estimates like this via Proposition \ref{prop:conditional-growth} if the unstable projector was coercive in the sense: there is a constant $\theta>0$ such that $\|P^uB\|_{L^p}\geq\theta\|B\|_{L^p}$ for all $B\in L^p$ with $\div B = 0$. This is a very strong result that does not happen to be true in this case, as the following lemma shows.

\begin{lemma}\label{lemma:non-coercivity}
For every \(1\leq p\leq\infty\), there exists a sequence $\{B_N^{(p)}\}_{N\in\N}\subset L^\infty(\T^3)$ such that $\div B_N^{(p)}=0$ in distributions, $\|B_N^{(p)}\|_{L^p}=1$ for all $N\in\N$, and
\[
\|P^uB_N^{(p)}\|_{L^p}\to 0, \quad \text{as } N\to \infty.
\]
\end{lemma}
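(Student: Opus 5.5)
The plan is to obtain $B_N^{(p)}$ by transporting a fixed, explicit divergence-free field \emph{backwards} in time with $T^{-1}$. Backward iteration expands the stable bundle by at least $\Lambda_s^N$ and contracts the unstable bundle by at least $\Lambda_u^{-N}$. After normalisation in $L^p$, the unstable component should therefore vanish in the limit.

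\emph{Choice of initial field.} I take the constant field $B=e_1$. It is trivially divergence-free and bounded. (The lemma does not require mean-free fields; if one wanted that, a smooth divergence-free field close to $e_1$ on a set of positive measure would work the same way.) By Proposition~\ref{prop:cones}\emph{(3)}, $e_1\in C_s$. Since $C_u\cap C_s=\{0\}$ and $E^u(\yvec)\subset C_u$ is a closed plane, uniform transversality \eqref{eq:uniform-transversality} gives a constant $c_0>0$, independent of $\yvec$, with $|P^s(\yvec)e_1|\geq c_0$ for every $\yvec\in\mathcal R$. For $N\in\N$ I set
\[
\widetilde B_N = (T^{-N})_\ast e_1, \quad \widetilde B_N(T^{-N}\yvec)=DT^{-N}(\yvec)e_1 ,
\]
and normalise $B_N^{(p)}=\widetilde B_N/\|\widetilde B_N\|_{L^p}$. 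Three properties are then immediate:
\begin{itemize}
\item $\widetilde B_N\in L^\infty$, because $T^{-N}$ is bi-Lipschitz.
\item $\div\widetilde B_N=0$, by the computation in Lemma~\ref{lem:divergence-preservation} with $\phi_t$ replaced by the volume-preserving bi-Lipschitz map $T^{-N}$.
\item $\|B_N^{(p)}\|_{L^p}=1$.
\end{itemize}

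\emph{Equivariance of the projectors.} The invariance in Proposition~\ref{prop:measurable-splitting}, $A(\xvec)E^{u,s}(\xvec)=E^{u,s}(T\xvec)$, yields $DT^{-N}(\yvec)E^{u,s}(\yvec)=E^{u,s}(T^{-N}\yvec)$. Hence the projectors are equivariant:
\[
P^{u}(T^{-N}\yvec)\,DT^{-N}(\yvec)=DT^{-N}(\yvec)\,P^{u}(\yvec),
\]
and the same holds for $P^s$. Therefore
\[
P^u\widetilde B_N(T^{-N}\yvec)=DT^{-N}(\yvec)P^u(\yvec)e_1, \qquad P^s\widetilde B_N(T^{-N}\yvec)=DT^{-N}(\yvec)P^s(\yvec)e_1 .
\]

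\emph{Pointwise estimates.} Corollary~\ref{cor:iterated}, applied at the point $T^{-N}\yvec$ to the vector $DT^{-N}(\yvec)v_u\in E^u(T^{-N}\yvec)$, gives $|DT^{-N}(\yvec)v_u|\leq\Lambda_u^{-N}|v_u|$ for $v_u\in E^u(\yvec)$. The stable half of Corollary~\ref{cor:iterated} gives $|DT^{-N}(\yvec)v_s|\geq\Lambda_s^N|v_s|$ for $v_s\in E^s(\yvec)\subset C_s$. Combining these with \eqref{eq:uniform-transversality}, for a.e.\ $\yvec$:
\[
|\widetilde B_N(T^{-N}\yvec)|\geq \kappa\Lambda_s^N c_0, \qquad |P^u\widetilde B_N(T^{-N}\yvec)|\leq \Lambda_u^{-N}\esssup\|P^u\| .
\]

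\emph{Conclusion.} Since $T^{-N}$ preserves Lebesgue measure, taking $L^p$ norms gives
\[
\|P^uB_N^{(p)}\|_{L^p}\lesssim \frac{\Lambda_u^{-N}}{\kappa c_0\Lambda_s^{N}}\to0 .
\]
This works uniformly for every $1\leq p\leq\infty$; in fact the same sequence serves for all $p$.

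The main point needing care is the equivariance of the projectors together with the uniform lower bound $|P^s(\yvec)e_1|\geq c_0$. Both depend only on the splitting being invariant and uniformly transverse, with $e_1$ lying strictly inside $C_s$. Once these hold, the argument is just the backward version of the estimates behind Proposition~\ref{prop:conditional-growth}.
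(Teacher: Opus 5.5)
Your argument is correct, and your field is the same as the paper's. Since $DT^{-N}(\yvec)=[DT^N(T^{-N}\yvec)]^{-1}$ on $\mathcal R$, your $\widetilde B_N$ is exactly $V_N(\xvec)=[DT^N(\xvec)]^{-1}e_1$. The difference lies in how you control the unstable component.

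The paper treats $V_N(\xvec)$ as a vector on the finite-time stable line $\ell_N(\xvec)=[DT^N(\xvec)]^{-1}\spann\{e_1\}$. It reuses the Cauchy estimate from the graph-transform construction of $E^s$ to get $d_{\mathrm{Gr}}(\ell_N(\xvec),E^s(\xvec))\lesssim q^N$. Comparing orthogonal projections then gives $|P^uV_N|\lesssim q^N|V_N|$.

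You instead decompose $e_1$ along the invariant splitting at the endpoint and commute $P^u$ with $DT^{-N}$ by equivariance. You then play the backward contraction $\Lambda_u^{-N}$ on $E^u$ against the backward expansion $\Lambda_s^N$ on $C_s$. Your route never uses how $E^s$ was constructed, only its invariance, transversality and Corollary~\ref{cor:iterated}. It gives an explicit per-step factor $(\Lambda_u\Lambda_s)^{-1}$, of order $\alpha^{-4}$, which matches the paper's $q\lesssim\alpha^{-4}$. The paper's route is shorter once the graph-transform estimate is available.

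One simplification: you do not need $c_0$ or $\kappa$ for the lower bound. Since $e_1\in C_s$, Corollary~\ref{cor:iterated} gives directly $|\widetilde B_N(T^{-N}\yvec)|=|DT^{-N}(\yvec)e_1|\geq\Lambda_s^N$.

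If you keep $c_0$, justify it by the positive angular distance between $e_1\in C_s$ and the cone $C_u\supset E^u(\yvec)$, namely $|P^s(\yvec)e_1|=|e_1-P^u(\yvec)e_1|\geq\dist(e_1,C_u)$. The estimate \eqref{eq:uniform-transversality} by itself only bounds the components of $e_1$ from above, so it cannot supply this lower bound.
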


\begin{proof}
We will prove that there exist constants \(C>0\) and \(\rho\in(0,1)\),
independent of \(p\) and \(N\), such that
\[
\|P^uB_N^{(p)}\|_{L^p}\leq C\rho^N.
\]
Let $e_1=(1,0,0)$ as before and define, almost everywhere,
\[
V_N(\xvec) = [DT^N(\xvec)]^{-1}e_1.
\]
The values of \(V_N\) on the negligible set on which \(DT^N\) is not defined may be chosen arbitrarily. Since \(T^N\) is bi-Lipschitz, \(V_N\in L^\infty(\T^3)\) and \(V_N\neq0\) almost everywhere.
Moreover, \(V_N\) is divergence-free. Indeed, for every \(\varphi\in C^\infty(\T^3)\), volume preservation and the change of variables \(\yvec=T^N(\xvec)\) give
\[
\int_{\T^3}V_N(\xvec)\cdot\nabla\varphi(\xvec)\dd\xvec = \int_{\T^3} [DT^N(T^{-N}\yvec)]^{-1}e_1 \cdot\nabla\varphi (T^{-N}\yvec) \dd\yvec = \int_{\T^3} DT^{-N}(\yvec)e_1\cdot \nabla\varphi (T^{-N}\yvec) \dd\yvec.
\]
By the chain rule we have
\[
DT^{-N}(\yvec)^\top \nabla\varphi (T^{-N}\yvec) = \nabla(\varphi\circ T^{-N})(\yvec),
\] 
thus integrating by parts
\[
\int_{\T^3}V_N(\xvec)\cdot\nabla\varphi(\xvec)\dd\xvec = \int_{\T^3} e_1\cdot \nabla(\varphi\circ T^{-N})(\yvec)\dd\yvec = 0,
\]
and we see that $\div V_N=0$ in distributions. Consider the finite-time stable line
\[
\ell_N(\xvec) = [DT^N(\xvec)]^{-1}\spann\{e_1\}.
\]
The stable graph-transform construction in the proof of Proposition~\ref{prop:measurable-splitting} gives constants \(C_1>0\) and \(\rho\in(0,1)\) such that
\begin{equation}\label{eq:finite-stable-line-convergence}
\esssup_{\xvec\in\T^3} d_{\mathrm{Gr}}(\ell_N(\xvec),E^s(\xvec)) \leq \sum_{k=N}^\infty q^k = \frac{q^N}{1-q} =: C_1\rho^N.
\end{equation}
Let \(\Pi_N(\xvec)\) and \(\Pi^s(\xvec)\) denote the orthogonal projections onto \(\ell_N(\xvec)\) and \(E^s(\xvec)\), respectively. \eqref{eq:finite-stable-line-convergence} implies that the distance between the projectors in the operator norm shrinks exponentially,
\[
\|\Pi_N(\xvec)-\Pi^s(\xvec)\| \leq C_1\rho^N.
\]
Since \(V_N(\xvec)\in\ell_N(\xvec)\), we have $V_N-\Pi^sV_N=(\Pi_N-\Pi^s)V_N$. Moreover, since \(P^u\) vanishes on \(E^s\) it follows that
\[
|P^u(\xvec)V_N(\xvec)| = |P^u(\xvec)(V_N(\xvec)-\Pi^s(\xvec)V_N(\xvec))| \leq \|P^u(\xvec)\|\|\Pi_N(\xvec)-\Pi^s(\xvec)\||V_N(\xvec)|.
\]
Uniform transversality of \(E^u\) and \(E^s\) implies that
\[
\operatorname*{ess\,sup}_{\xvec\in\T^3}\|P^u(\xvec)\|<\infty.
\]
Combining all, we obtain the pointwise estimate $|P^uV_N|\leq C\rho^N|V_N|$ almost everywhere. For a fixed \(p\in[1,\infty]\), define
\[
B_N^{(p)} = \frac{V_N}{\|V_N\|_{L^p}}.
\]
Then \(B_N^{(p)}\) is divergence-free, \(\|B_N^{(p)}\|_{L^p}=1\), and $\|P^uB_N^{(p)}\|_{L^p} \leq C\rho^N$.
\end{proof}

Lemma \ref{lemma:non-coercivity} shows that the identity
\[
\mathcal Z_p = \left\{ B\in L^p(\T^3): \div B=0,\ P^uB=0 \right\} = \{0\}
\]
is an injectivity statement rather than a coercive estimate. This explains why the data-dependent factor \(\|P^uB_0\|_{L^p}\) in Proposition~\ref{prop:conditional-growth} cannot be replaced by a uniform multiple of \(\|B_0\|_{L^p}\).

\section{Exponential decay and proof of Theorem \ref{thm:reversed-flow-decay}}\label{s:reversed-flow}

The preceding results are strongly asymmetric with respect to time reversal.  The original time-one map has a one-dimensional stable bundle, which we just saw it cannot contain any non-zero divergence-free \(L^p\) field.  By contrast, the stable bundle of the inverse map is the two-dimensional space \(E^u\), and we will show in this section that it does contain a non-zero bounded divergence-free field.  Consequently, reversing the order and the sign of the three shears produces a velocity field for which a nontrivial solution of the induction equation decays exponentially in \(L^\infty\).

As introduced before, we define the reversed velocity on one period by
\[
\widetilde u_\alpha(t,\xvec)=-u_\alpha(1-t,\xvec), \quad \text{for all } 0\leq t<1,
\]
and extend it one-periodically.  Equivalently, if \(\tau=t-\lfloor t\rfloor\), then, away from the switching times,
\[
\widetilde u_\alpha(t,x,y,z) =-3\alpha
\begin{cases}
(0,0,h(x)),&0\leq\tau<1/3,\\
(0,h(z),0),&1/3\leq\tau<2/3,\\
(h(y),0,0),&2/3\leq\tau<1.
\end{cases}
\]
Thus the three inverse shears are applied in reverse order, and the time-one map of \(\widetilde u_\alpha\) is \(T^{-1}\). The main purpose of this section is to prove Theorem \ref{thm:reversed-flow-decay}, and for that, the central point is to construct a bounded divergence-free field that lies in \(E^u\) almost everywhere. Here the result follows from the piecewise-affine structure of \(T\), which makes sufficiently small local unstable discs exactly flat.

As before, we let \(\mathcal S\) be the singular set of \(T\). The singular set of \(T^{-1}\) is \(T(\mathcal S)\), so we put $\Gamma=\mathcal S\cup T(\mathcal S)$. This is a finite union of Lipschitz hypersurfaces. Hence there exists a constant \(C_\Gamma>0\) such that
\begin{equation}\label{eq:tubular-neighbourhood-Gamma}
\bigl|\{\xvec\in\T^3: \dist(\xvec,\Gamma)<\delta\}\bigr| \leq C_\Gamma\delta
\end{equation}
for all \(0<\delta\leq 1\).

It will be convenient to use the graph-transform definition of the unstable plane from Section \ref{s:hyperbolic-splitting} also at points that have a regular past but whose forward orbit might meet \(\mathcal S\). Fix \(F_\star\in\mathscr G^u\), as in the proof of Proposition \ref{prop:measurable-splitting}, and define
\begin{equation}\label{eq:past-unstable-plane}
\widehat E^u(\xvec) =\lim_{m\to\infty}DT^m(T^{-m}\xvec)F_\star
\end{equation}
whenever the full backward derivative itinerary is defined, which is a full-measure set.  The contraction of the graph transform proves that this limit exists. On \(\mathcal R\), it
agrees with the bundle \(E^u\) from Proposition~\ref{prop:measurable-splitting}.  In particular, \(\widehat E^u(\xvec)\) depends only on the past sequence of affine branches
visited by \(\xvec\).

First, we need a technical lemma that shows that a local unstable disc consists of nearby points that share the same past branch itinerary.

\begin{lemma}\label{lem:flat-unstable-plaques}
There is a measurable function \(r:\mathcal R\to[0,\infty)\), positive almost everywhere, with the following property.  If \(r(\xvec)>0\), \(\vvec\in E^u(\xvec)\), and \(|\vvec|<r(\xvec)\), then
\begin{equation}\label{eq:affine-backward-orbits}
T^{-n}(\xvec+\vvec)=T^{-n}\xvec+DT^{-n}(\xvec)\vvec \quad \text{for every } n\geq 0.
\end{equation}
Moreover, the backward orbits of the two points encounter the same sequence of affine branches and 
\begin{equation}\label{eq:constant-past-unstable-plane}
\widehat E^u(\xvec+\vvec)=E^u(\xvec).
\end{equation}
\end{lemma}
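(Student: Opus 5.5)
The plan is to define $r(\xvec)$ through the backward orbit: set
\[
r(\xvec)=\inf_{n\geq 1}\frac{\dist(T^{-n}\xvec,\Gamma)}{\|DT^{-n}(\xvec)|_{E^u(\xvec)}\|},
\]
or a similar quantity built from $\dist(T^{-n}\xvec,\mathcal S)$. By Proposition \ref{prop:measurable-splitting} and invariance, $DT^{-n}(\xvec)$ maps $E^u(\xvec)$ to $E^u(T^{-n}\xvec)$. Applying the inverse of \eqref{eq:bundle-expansion} at $T^{-n}\xvec$ gives $\|DT^{-n}(\xvec)|_{E^u(\xvec)}\|\leq\Lambda_u^{-n}$. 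It therefore suffices to know that $\dist(T^{-n}\xvec,\Gamma)\geq \delta(\xvec)\,\Lambda_u^{-n/2}$ for all $n$, for some $\delta(\xvec)>0$. Then the infimum is at least $\delta(\xvec)\Lambda_u^{n/2}$-weighted, hence bounded below by $\delta(\xvec)>0$. Measurability of $r$ is immediate, since it is a countable infimum of measurable functions on $\mathcal R$.

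Positivity almost everywhere comes from Borel--Cantelli together with \eqref{eq:tubular-neighbourhood-Gamma}. Let $\delta_n=\Lambda_u^{-n/2}$. Because $T^{-n}$ preserves volume,
\[
|\{\xvec:\dist(T^{-n}\xvec,\Gamma)<\delta_n\}|\leq C_\Gamma\delta_n,
\]
and this is summable in $n$. So for a.e.\ $\xvec$ there is $N(\xvec)$ with $\dist(T^{-n}\xvec,\Gamma)\geq\delta_n$ for all $n\geq N$. Finitely many of the remaining indices $n<N$ are handled by $\xvec\in\mathcal R$: the points $T^{-n}\xvec$ avoid the closed set $\mathcal S$, so their distances to it are positive. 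I would use $\mathcal S$ instead of $\Gamma$ if needed, because $T(\mathcal S)$ is also in the orbit of $\mathcal S$, so distances to $\Gamma$ are positive as well. Taking the minimum yields $\delta(\xvec)>0$.

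Next comes the identity \eqref{eq:affine-backward-orbits}, proved by induction on $n$. Suppose $T^{-n}(\xvec+\vvec)=T^{-n}\xvec+\vvec_n$ with $\vvec_n=DT^{-n}(\xvec)\vvec$. We then have $|\vvec_n|\leq\Lambda_u^{-n}|\vvec|<\Lambda_u^{-n}r(\xvec)\leq\dist(T^{-n}\xvec,\Gamma)$, with $r$ chosen so that this holds with $n+1$ in place of $n$ in the appropriate slot. Hence the segment from $T^{-n}\xvec$ to $T^{-n}\xvec+\vvec_n$ lies in a single smoothness region of $T^{-1}$, since the singular set of $T^{-1}$ is $T(\mathcal S)\subset\Gamma$. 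On that region $T^{-1}$ is affine with linear part $A_i^{-1}$, where $A_i=DT(T^{-n-1}\xvec)$, so
\[
T^{-1}(T^{-n}\xvec+\vvec_n)=T^{-n-1}\xvec+A_i^{-1}\vvec_n=T^{-n-1}\xvec+\vvec_{n+1}.
\]
The branch-labeling requires care. The preimage segment must also avoid $\mathcal S$, so that $DT$ at $T^{-n-1}(\xvec+\vvec)$ equals $A_i$. This is why I include both $\mathcal S$ and $T(\mathcal S)$ in $\Gamma$, imposing the distance condition at the points $T^{-n}\xvec$ and also at $T^{-n-1}\xvec$ with the slightly smaller vector $\vvec_{n+1}$. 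Alternatively, one can note that the open segment stays in one connected region $\mathcal R_{\eps_1,\eps_2,\eps_3}$ after mapping.

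Finally, \eqref{eq:constant-past-unstable-plane} follows because the backward branch sequences agree. We get $DT(T^{-m}(\xvec+\vvec))=DT(T^{-m}\xvec)$ for all $m\geq1$, so $DT^m(T^{-m}(\xvec+\vvec))=DT^m(T^{-m}\xvec)$, and the limit \eqref{eq:past-unstable-plane} coincides with $E^u(\xvec)$ on $\mathcal R$. The point $\xvec+\vvec$ may fail to lie in $\mathcal R$, since its forward orbit may hit $\mathcal S$, but its backward itinerary is defined, which is all that $\widehat E^u$ requires.

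I expect the main obstacle to be bookkeeping. The distance condition has to guarantee both that segments avoid the singular set of $T^{-1}$ and that branch labels are consistent. This means choosing the right rate $\delta_n$, slower than $\Lambda_u^{-n}$ but still summable, and handling the finitely many early indices.
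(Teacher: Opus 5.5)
Your proposal is correct and follows essentially the same route as the paper: Borel--Cantelli along the backward orbit at a geometric rate $\beta^n$ with $\beta\in(\Lambda_u^{-1},1)$ (your choice is $\beta=\Lambda_u^{-1/2}$), then $r$ defined as an infimum over $n$ of weighted distances from $T^{-n}\xvec$ to $\Gamma=\mathcal S\cup T(\mathcal S)$. The induction then uses backward contraction on $E^u$ to keep each segment off $\Gamma$, so $T^{-1}$ acts affinely on it, the branch itineraries coincide, and $\widehat E^u(\xvec+\vvec)=E^u(\xvec)$ follows. Two small fixes: the infimum defining $r$ must start at $n=0$, since the first inductive step needs $|\vvec|<\dist(\xvec,\Gamma)$; and with your normalization the inductive bound should be written $|\vvec_n|\le\|DT^{-n}(\xvec)|_{E^u(\xvec)}\|\,|\vvec|<\dist(T^{-n}\xvec,\Gamma)$, because the chain through $\Lambda_u^{-n}r(\xvec)$ only matches the paper's definition $r=\tfrac14\min\{\inf_{n\geq 0}\Lambda_u^{n}\dist(T^{-n}\xvec,\Gamma),r_\star\}$.
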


\begin{proof}
Set \(\eta=\Lambda_u^{-1}\in(0,1)\), and choose \(\beta\in(\eta,1)\).  Since \(T\) preserves Lebesgue measure, estimate \eqref{eq:tubular-neighbourhood-Gamma} gives
\[
\sum_{n=0}^\infty\bigl|\{\xvec\in\T^3: \dist(T^{-n}\xvec,\Gamma)<\beta^n\}\bigr| = \sum_{n=0}^\infty \bigl|\{\xvec\in\T^3: \dist(\xvec,\Gamma)<\beta^n\}\bigr| \leq C_\Gamma\sum_{n=0}^\infty\beta^n<\infty.
\]
The Borel--Cantelli lemma therefore implies that, for almost every \(\xvec\),
\begin{equation}\label{eq:BC-distance}
\dist(T^{-n}\xvec,\Gamma)\geq\beta^n
\end{equation}
for every sufficiently large \(n\). After removing the null set $\cup_{k\in\Z}T^k(\Gamma)$, define
\begin{equation}\label{eq:plaque-radius}
r(\xvec)=\frac14\min\left\{\inf_{n\geq0}\eta^{-n}\dist(T^{-n}\xvec,\Gamma),r_\star\right\},
\end{equation}
where \(r_\star>0\) is a fixed number, that we choose smaller than one quarter of the injectivity radius of the flat torus.  For each \(n\), all additions, segments, and affine identities below are understood in the unique Euclidean lift centred at \(T^{-n}\xvec\). We extend \(r\) by zero on the exceptional set where it has not been defined. This is a measurable function.  Using \eqref{eq:BC-distance} we get that
\[
\eta^{-n}\dist(T^{-n}\xvec,\Gamma) \geq\left(\frac{\beta}{\eta}\right)^n\to\infty
\]
for all sufficiently large \(n\), while each of the finitely many remaining
terms is positive.  Thus \(r(\xvec)>0\) for almost every \(\xvec\).
Now fix such an \(\xvec\), take \(\vvec\in E^u(\xvec)\) with \(|\vvec|<r(\xvec)\), and set \(\yvec=\xvec+\vvec\).  We prove \eqref{eq:affine-backward-orbits} by induction.  For $n=0$ it is trivial, so suppose it holds at time \(n\).  By Proposition \ref{prop:measurable-splitting} and Corollary \ref{cor:iterated} it follows that
\begin{equation}\label{eq:inverse-contraction-on-Eu}
|DT^{-n}(\xvec)\vvec| \leq \Lambda_u^{-n}|\vvec|
\end{equation}
for every \(\vvec\in E^u(\xvec)\) and \(n\geq0\). Indeed, if \(\zvec=T^{-n}\xvec\) and \(w=DT^{-n}(\xvec)\vvec\), then \(w\in E^u(\zvec)\) and \(\vvec=DT^n(\zvec)w\), so the forward expansion estimate gives the claim. Putting this together with \eqref{eq:plaque-radius} there yields
\[
|T^{-n}\yvec-T^{-n}\xvec| =|DT^{-n}(\xvec)\vvec|\leq\eta^n|\vvec|<\frac14\dist(T^{-n}\xvec,\Gamma).
\]
Consequently, the segment joining \(T^{-n}\xvec\) and \(T^{-n}\yvec\) lies in one component of \(\T^3\setminus\Gamma\). The map \(T^{-1}\) is affine on this component, and therefore
\[
T^{-(n+1)}\yvec-T^{-(n+1)}\xvec =DT^{-1}(T^{-n}\xvec)\bigl(T^{-n}\yvec-T^{-n}\xvec\bigr) =DT^{-(n+1)}(\xvec)\vvec.
\]
This closes the induction.  It also shows that the two backward orbits visit the same affine branch at every step, so the finite graph transforms in \eqref{eq:past-unstable-plane} are identical for \(\xvec\) and \(\yvec\). Passing to the limit gives \eqref{eq:constant-past-unstable-plane}.
\end{proof}

For every point to which Lemma~\ref{lem:flat-unstable-plaques} applies, we may therefore introduce the flat disc
\begin{equation}\label{eq:flat-plaque}
\mathcal P_{\xvec} =\{\xvec+\vvec: \vvec\in E^u(\xvec),\ |\vvec|<r(\xvec)\}.
\end{equation}
The plane \(\widehat E^u\) is constant along this disc.  We next select a positive-measure family of discs and use two-dimensional stream functions on them to construct the desired vector field.

\begin{proposition}\label{prop:bounded-section-Eu}
There exists a non-zero, mean-free field $B_0\in L^\infty(\T^3)$ such that $\div B_0=0$ in distributions and $B_0(\xvec)\in E^u(\xvec)$ for almost every \(\xvec\in\T^3\).
\end{proposition}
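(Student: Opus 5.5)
The plan is to build $B_0$ locally on a single flat plaque family and extend by zero. By Lemma \ref{lem:flat-unstable-plaques}, for a.e.\ $\xvec$ we have $r(\xvec)>0$. Since the set $\{r>r_0\}$ has positive measure for some $r_0>0$, and $E^u$ is measurable with values in the compact set $\mathscr G_u$, Lusin's theorem gives a compact set $K\subset\{r>r_0\}\cap\mathcal R$ of positive measure on which $\xvec\mapsto E^u(\xvec)$ is continuous. Pick a Lebesgue density point $\xvec_0$ of $K$ and set $F_0=E^u(\xvec_0)$, $\nu$ a unit normal to $F_0$. The key observation is that plaques through points of $K$ near $\xvec_0$ are flat discs of radius $\geq r_0$ with normals close to $\nu$. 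By \eqref{eq:constant-past-unstable-plane}, $\widehat E^u$ is constant on each plaque and equals $E^u$ a.e.\ on it. Moreover, two plaques $\mathcal P_{\xvec},\mathcal P_{\xvec'}$ either are disjoint or, if they meet at a regular point $\zvec$, both equal the plane $\widehat E^u(\zvec)$ through $\zvec$. So their union inside a small ball is a measurable lamination by (pieces of) flat discs.

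\textbf{Step 1: a foliated box.} In coordinates $(s,\xi)\in\R\times\R^2$ adapted to $\nu$ near $\xvec_0$, each plaque through a point of $K\cap B(\xvec_0,r_0/10)$ is, within the ball $B(\xvec_0,r_0/2)$, the graph of an affine function $\xi\mapsto s_j+\ell_j\cdot\xi$ with small slope. Let $S\subset\R$ be the set of heights $s$ at which the transversal line $\{\xi=0\}$ hits such a plaque. Since the plaques are pairwise disjoint or coincident, $S$ parametrizes them injectively. The union $U$ of these graphs over $|\xi|<r_0/4$ is measurable and, by the density point choice, contains $K\cap B(\xvec_0,\rho)$ for small $\rho$, so $|U|>0$. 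By Fubini in the chart $(s,\xi)\mapsto(s+\ell(s)\cdot\xi,\xi)$, the set $S$ has positive one-dimensional measure.

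\textbf{Step 2: stream-function construction.} Fix $\chi\in C_c^\infty(\{|\xi|<r_0/4\})$ nonzero. On the plaque with height $s\in S$, define $B_0$ as the two-dimensional rotated gradient of $\chi$ within that plane, i.e.\ the tangent vector field $\nu_s\times\nabla(\chi\circ\pi_s)$, where $\nu_s$ is its unit normal and $\pi_s$ the projection onto the $\xi$-coordinates. Set $B_0=0$ off $U$. This is bounded and tangent to $E^u$ a.e. For divergence-freeness, I would test against $\varphi\in C^\infty$. Using the chart $\Phi(s,\xi)$, which is bi-Lipschitz for small slopes with Jacobian $J(s)=1$ up to a measurable factor depending only on $s$ (the slopes vary measurably with $s$ but are constant along each plaque), the integral $\int B_0\cdot\nabla\varphi$ becomes $\int_S g(s)\int_{\R^2}\nabla^\perp\chi\cdot\nabla_\xi(\varphi\circ\Phi)(s,\xi)\,d\xi\,ds$, with $g$ bounded. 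Each inner integral vanishes by 2D integration by parts, since $\nabla^\perp\chi$ is divergence-free and compactly supported in the disc. The point is that $B_0$ is tangent to each leaf, so only tangential derivatives of $\varphi$ enter. This holds even though no regularity in $s$ is available.

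\textbf{Step 3: mean-freeness.} Each leaf contributes $\int\nabla^\perp\chi\,d\xi=0$, so $B_0$ is mean-free. It is nonzero because $|U|>0$.

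The main obstacle is Step 1: making rigorous that the plaques through a positive-measure set form a genuine measurable lamination with a positive-measure transversal. This rests on the dichotomy "disjoint or same plane", which needs the flatness together with \eqref{eq:constant-past-unstable-plane} at intersection points. I would also handle the Jacobian factor of the chart carefully, taking it as a product of a function of $s$ and a constant.
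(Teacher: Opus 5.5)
Your overall architecture is the paper's: flat plaques through a positive-measure transversal family, disjointness via the shared backward itinerary, a leafwise stream function, and integration by parts only in leaf directions. However, Step 2 rests on a false claim about the Jacobian.

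\textbf{The Jacobian depends on $\xi$.} In the chart $\Phi(s,\xi)=(s+\ell(s)\cdot\xi,\xi)$ one has $\det D\Phi(s,\xi)=1+\ell'(s)\cdot\xi=:J(s,\xi)$. This depends on $\xi$ whenever $\ell'(s)\neq0$. That the slope is constant along each plaque does not help: neighbouring non-parallel leaves fan apart, so Lebesgue measure disintegrates along the leaves with the non-constant density $J(s,\xi)\dd{\xi}$, not $c(s)\dd{\xi}$.

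Write your field as $B_0(\Phi(s,\xi))=c(s)\,[(\nabla^\perp\chi)_1X_1(s)+(\nabla^\perp\chi)_2X_2(s)]$, with $X_j=\partial_{\xi_j}\Phi$ and $c$ bounded. Then
\[
\int B_0\cdot\nabla\varphi\dd{\xvec}=\int_S c(s)\int\nabla^\perp\chi\cdot\nabla_\xi(\varphi\circ\Phi)\,J\dd{\xi}\dd{s}=-\int_S c(s)\int(\varphi\circ\Phi)\,\ell'(s)\cdot\nabla^\perp\chi\dd{\xi}\dd{s}.
\]
This does not vanish in general. The same $\ell'(s)\cdot\xi$ term also spoils the mean-free computation in Step 3, since it produces integrals of the form $\int\xi_k(\nabla^\perp\chi)_j\dd{\xi}=\pm\int\chi\dd{\xi}$.

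The paper's remedy is the factor $1/J$ in \eqref{eq:bounded-section-construction}. One sets $B_0(\Phi(s,\xi))=\mathbf 1_S(s)J(s,\xi)^{-1}[\partial_{\xi_2}\chi\,X_1-\partial_{\xi_1}\chi\,X_2]$. This field is still tangent to the leaves and still bounded once the disc is small enough that $1/2\leq J\leq 3/2$. The change of variables then cancels $J$ exactly, leaving $\int\nabla^\perp\chi\cdot\nabla_\xi(\varphi\circ\Phi)\dd{\xi}=0$ on each leaf. The alternative would be to prove $\ell'=0$ a.e.\ on $S$. That does not follow from flatness and would need a separate quantitative estimate on how $E^u$ varies transversally.

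\textbf{The chart is not yet shown to be Lipschitz.} Small slopes do not make $\Phi$ bi-Lipschitz; you need $s\mapsto\ell(s)$ to be Lipschitz. This is also what your Fubini argument for $|S|>0$ and the change of variables require. A merely measurable, or even continuous, map $s\mapsto s+\ell(s)\cdot\xi$ need not send null sets to null sets.

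The missing argument is the paper's. Disjointness of the leaves over $|\xi|<R$ forces $|\ell(s)-\ell(s')|\leq|s-s'|/R$. Otherwise $\xi=-(s-s')\,(\ell(s)-\ell(s'))/|\ell(s)-\ell(s')|^2$ satisfies $|\xi|<R$ and gives a common point of the two leaves. After this, $\ell$ extends to a Lipschitz function off $S$ and $J$ makes sense a.e.

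With these two corrections, your Lusin/density-point route to a positive-measure transversal is a workable substitute for the paper's choice of a coordinate line given by a Fubini section.
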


\begin{proof}

By Lemma \ref{lem:flat-unstable-plaques}, after extending it by zero on a negligible set, the function \(r(\xvec)>0\) is measurable and positive almost everywhere on \(\T^3\). For every \(m\in\N\), define the positive level set
\[
A_m =\{\xvec\in\mathcal R:r(\xvec)\geq m^{-1}\}.
\]
Since
\[
\bigcup_{m=1}^{\infty}A_m = \{x\in\mathcal R:r(x)>0\}
\]
has full measure, there exists \(m_0\in\N\) such that $|A_{m_0}|\equiv \mathscr L^3(A_{m_0})>0$, where now we make explicit that $\mathscr L^d$ is the $d$-dimensional Lebesgue measure. Writing \(\xvec=(\tau,y,z)\in\T\times\T^2\), Fubini's theorem gives
\[
\mathscr L^3(A_{m_0}) = \int_{\T^2} \mathscr L^1 ( \{\tau\in\T:(\tau,y,z)\in A_{m_0}\} )\dd y\dd z,
\]
Consequently, there exist \(y_0,z_0\in\T\) such that the section $E=\{\tau\in\T:(\tau,y_0,z_0)\in A_{m_0}\}$ has positive one-dimensional measure. Covering \(\T\) by finitely many open coordinate intervals, we may choose one such interval \(I\subset\T\), whose closure is contained in a coordinate chart, and for which $\mathscr L^1(E\cap I)>0$.  By inner regularity of Lebesgue measure, there exists a compact set \(K\subset E\cap I\) with \(\mathscr L^1(K)>0\). Set $r_0=m_0^{-1}$, $\xvec_\tau=(\tau,y_0,z_0)$ with $\tau\in K$. Then, $\xvec_\tau\in\mathcal R$ and $r(\xvec_\tau)\geq r_0$ for every \(\tau\in K\).

The unstable planes are uniformly transverse to \(e_1=(1,0,0)\). Consequently, for \(\tau\in K\), there is a uniformly bounded vector \(g(\tau)=(g_1(\tau),g_2(\tau))\in\R^2\) such that
\[
E^u(\xvec_\tau)=\left\{(g(\tau)\cdot\boldsymbol{s},s_1,s_2):\boldsymbol{s}=(s_1,s_2)\in\R^2\right\}.
\]
Choose \(R_0>0\) sufficiently small that $|(g(\tau)\cdot\boldsymbol{s},s_1,s_2)|<r_0$ whenever \(\tau\in K\) and \(|\boldsymbol{s}|<R_0\).  By Lemma~\ref{lem:flat-unstable-plaques}, each of the sets
\begin{equation}\label{eq:uniform-flat-plaques}
\mathcal P_\tau =\{ (\tau+g(\tau)\cdot\boldsymbol{s}, y_0+s_1,z_0+s_2): |\boldsymbol{s}|<R_0\}
\end{equation}
is contained in the set \eqref{eq:flat-plaque} through \(\xvec_\tau\). 
Choose coordinate intervals \(J_y,J_z\subset\T\) containing \(y_0,z_0\), respectively. Since \(K\) is compactly contained in \(I\) and \(g\) is uniformly bounded on \(K\), after decreasing \(R_0\) if necessary, all the discs \(P_\tau\), with \(\tau\in K\), are contained in the single product coordinate chart $I\times J_y\times J_z\subset\T^3$. We fix a lift of this chart to \(\R^3\), and all differences and affine identities in the remainder of the proof are understood in this lift.

We claim that these discs are pairwise disjoint.  If \(\mathcal P_\tau\) and \(\mathcal P_{\tau'}\) had a common point, the backward orbits of that point would encounter the same sequence of affine branches as both centres.  By the past graph-transform formula \eqref{eq:past-unstable-plane}, this would imply
\[
E^u(\xvec_\tau)=E^u(\xvec_{\tau'}).
\]
The difference \((\tau'-\tau)e_1\) of the two centres would then belong to this common plane.  Uniform transversality of \(E^u\) to \(e_1\) forces \(\tau=\tau'\), proving the claim.

The non-intersection also gives regularity of the slope of the vector \(g\).  Let \(\tau<\tau'\) be in \(K\), and put \(d=g(\tau')-g(\tau)\).  If
\[
|d|>\frac{\tau'-\tau}{R_0},
\]
then
\[
\boldsymbol{s} =-\frac{\tau'-\tau}{|d|^2}d
\]
satisfies \(|\boldsymbol{s}|<R_0\) and $\tau'-\tau+d\cdot\boldsymbol{s}=0$. The two points in \eqref{eq:uniform-flat-plaques} having this same value of \(\boldsymbol{s}\) would coincide, which is impossible.  Hence
\[
|g(\tau')-g(\tau)| \leq R_0^{-1}|\tau'-\tau| \quad \text{for every } \tau,\tau'\in K.
\]
Replacing \(I\) by \([\min K,\max K]\), we may assume that the endpoints of \(I\) belong to \(K\).
We can extend \(g\) linearly across the complementary intervals of \(K\) to obtain a Lipschitz map on \(I\), still denoted by \(g\).  We choose a disc \(D\subset \interior\{\boldsymbol{s}\in\R^2: |\boldsymbol{s}|<R_0\}\), centred at the origin, so small that
\begin{equation}\label{eq:small-box-condition}
\Lip(g)\sup_{\boldsymbol{s}\in D}|\boldsymbol{s}|\leq\frac{1}{2},
\end{equation}
and we define
\[
\Phi(\boldsymbol{s},\tau) =\bigl(\tau+g(\tau)\cdot\boldsymbol{s}, y_0+s_1,z_0+s_2\bigr), \quad (\boldsymbol{s},\tau)\in D\times I.
\]
Using \eqref{eq:small-box-condition} we readily see that this map is bi-Lipschitz. We can define now two vectors that are tangent to the unstable by
\begin{equation}\label{eq:plaque-tangent-vectors}
X_1(\tau)=\partial_{s_1}\Phi=(g_1(\tau),1,0), \quad
X_2(\tau)=\partial_{s_2}\Phi=(g_2(\tau),0,1),
\end{equation}
and the Jacobian, defined almost everywhere, is given by
\[
J(\boldsymbol{s},\tau) =\det D\Phi(\boldsymbol{s},\tau) =1+g'(\tau)\cdot\boldsymbol{s}.
\]
In particular, \eqref{eq:small-box-condition} gives $1/2\leq J(\boldsymbol{s},\tau)\leq 3/2$ for almost every \((\boldsymbol{s},\tau)\in D\times I\). Finally, choose a nonconstant streamfunction \(\psi\in C_c^\infty(D)\), and define \(B_0\) by
\begin{equation}\label{eq:bounded-section-construction}
B_0(\Phi(\boldsymbol{s},\tau)) =\frac{\mathbf 1_K(\tau)}{J(\boldsymbol{s},\tau)} \bigl[
\partial_{s_2}\psi(\boldsymbol{s})X_1(\tau) -\partial_{s_1}\psi(\boldsymbol{s})X_2(\tau) \bigr]
\end{equation}
for \((\boldsymbol{s},\tau)\in D\times I\). Additionally we set \(B_0=0\) outside \(\Phi(D\times I)\).  The bounds on \(g\), \(J^{-1}\), and \(\nabla\psi\) show that \(B_0\in L^\infty(\T^3)\).  It is non-zero because \(K\) has positive measure and \(\psi\) is chosen nonconstant. For \(\tau\in K\), \(X_1(\tau)\) and \(X_2(\tau)\) span the unstable plane along the corresponding disc.  Since \(\Phi\) is bi-Lipschitz and \(\mathcal R^c\) has zero three-dimensional measure, it follows that
\[
B_0(\xvec)\in E^u(\xvec) \quad \text{for almost every } \xvec\in\T^3.
\]

It remains to verify the divergence constraint.  Let \(\varphi\in C^\infty(\T^3)\).  The change of variables \(\xvec=\Phi(\boldsymbol{s},\tau)\), \eqref{eq:plaque-tangent-vectors}, and the factor \(J^{-1}\) in \eqref{eq:bounded-section-construction} give
\[
\int_{\T^3}B_0(\xvec)\cdot\nabla\varphi(\xvec)\dd\xvec =\int_K\int_D \bigl[ \partial_{s_2}\psi(\boldsymbol{s}) \partial_{s_1}(\varphi\circ\Phi)(\boldsymbol{s},\tau) -\partial_{s_1}\psi(\boldsymbol{s}) \partial_{s_2}(\varphi\circ\Phi)(\boldsymbol{s},\tau) \bigr]\dd\boldsymbol{s}\dd\tau =0.
\]
The last equality follows by integration by parts in \(\boldsymbol{s}\) since the two mixed derivatives cancel.  Thus \(\div B_0=0\) in distributions.  Notice that the possibly irregular factor \(\mathbf 1_K(\tau)\) causes no additional term, because the weak calculation differentiates only in the two directions tangent to the discs. Finally, since \(X_1(\tau)\) and \(X_2(\tau)\) do not depend on \(\boldsymbol{s}\), compact support of \(\psi\) gives
\[
\int_{\T^3}B_0(\xvec)\dd\xvec =\int_K\left[ X_1(\tau)\int_D\partial_{s_2}\psi(\boldsymbol{s})\dd\boldsymbol{s} -X_2(\tau)\int_D\partial_{s_1}\psi(\boldsymbol{s})\dd\boldsymbol{s} \right]\dd\tau =0.
\]
Therefore \(B_0\) is also mean-free.
\end{proof}

Proposition \ref{prop:bounded-section-Eu} readily implies the claim of Theorem \ref{thm:reversed-flow-decay}.

\begin{proof}[Proof of Theorem \ref{thm:reversed-flow-decay}]
Let \(B_0\) be the field constructed in Proposition~\ref{prop:bounded-section-Eu}.  Denote by \(\widetilde\phi_t\) the flow of \(\widetilde u_\alpha\).  On the first period it is given by
\[
\widetilde\phi_t =\phi_{1-t}\circ T^{-1}, \quad 0\leq t\leq1,
\]
and hence \(\widetilde\phi_1=T^{-1}\).  At integer times, the push-forward formula for the induction equation reads
\[
B(n,T^{-n}\xvec) =DT^{-n}(\xvec)B_0(\xvec).
\]
Since \(B_0(\xvec)\in E^u(\xvec)\) almost everywhere, \eqref{eq:inverse-contraction-on-Eu} gives
\[
|B(n,T^{-n}\xvec)| \leq\Lambda_u^{-n}|B_0(\xvec)|.
\]
The map \(T^{-n}\) preserves Lebesgue measure, so taking the \(L^p\) norm
proves the claim in Theorem \ref{thm:reversed-flow-decay} at integer times. For the estimate for all real times we set
\[
M_\alpha =\sup_{0\leq\tau\leq1} \esssup_{\xvec\in\T^3} \|D\widetilde\phi_\tau(\xvec)\|<\infty.
\]
If \(t=n+\tau\), with \(n\in\N\) and \(0\leq\tau<1\), we can write
\[
\| B(t,\cdot)\|_{L^p} \leq M_\alpha\| B(n,\cdot)\|_{L^p} \leq M_\alpha\Lambda_u^{-n}\|B_0\|_{L^p}.
\]
Since \(n\geq t-1\), there holds $\Lambda_u^{-n} \leq\Lambda_u \e^{-(\log\Lambda_u)t}$. Thus the claim of Theorem \ref{thm:reversed-flow-decay} follows with constants $\lambda_2 = \log\Lambda_u$ and \(C=M_\alpha\Lambda_u\).
\end{proof}

This result does not contradict the triviality of the stable kernel proved in Section \ref{s:L2-kernel}.  The stable bundle for the reversed map is the original two-dimensional bundle \(E^u\), not the one-dimensional bundle \(E^s\). Moreover, the bunching inequality from Lemma \ref{lem:dual-PDE-exclusion} is not available for \(T^{-1}\) since $\Lip(T^{-1})\asymp \alpha^3$, see \eqref{eq:inverse-DT}, and $\Lambda_u\asymp\alpha$.

\appendix

\section{Exponential growth with a hyperbolic point}\label{s:hyperbolic-point}

In this section we will show that any divergence-free autonomous smooth vector field with a hyperbolic point defines a non-universal ideal dynamo. In particular we obtain exponential stretching in $L^p$ for any $p>1$. The limit case $p=1$ is not covered here but only linear growth is to be expected, see \cite{BrueCotiZelatiMarconi24}. We will argue in $\T^2$ for simplicity. The same argument extends to three-dimensional hyperbolic equilibria with a one-dimensional unstable space. For a two-dimensional unstable space, the admissible range of $p$ depends on the unstable exponents.

\begin{lemma}\label{lemma:A1}
Let $u\in C^\infty(\T^2)$ be a divergence-free velocity field with flow map $\phi_t$. Suppose that there exist $\xvec_\ast\in\T^2$ and a positive number $\mu>0$ such that
\[
u(\xvec_\ast)=0, \quad \mathrm{spec}\, Du(\xvec_\ast) = \{\mu,-\mu\}.
\]
Fix $p>1$, then there exist a divergence-free field $B_0\in C^\infty(\T^2)$ and constants $\lambda,C>0$ such that
\[
\int_{\T^2} |D\phi_t(\xvec)B_0(\xvec)|^p\dd\xvec \geq C\e^{\lambda t}, \quad \text{for all }t\geq 0.
\]
\end{lemma}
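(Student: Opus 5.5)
The plan is to concentrate $B_0$ in a small ball around $\xvec_\ast$, aligned with the unstable direction, and to use the Lagrangian formula together with the linearization of the flow near the hyperbolic point. In $\T^2$ take $B_0=\nabla^\perp\rho_0$ with $\rho_0\in C_c^\infty(B_r(\xvec_\ast))$, so that $B_0$ is automatically smooth and divergence-free. By volume preservation, $\int_{\T^2}|D\phi_t B_0|^p\dd\xvec=\|B(t,\cdot)\|_{L^p}^p$. In two dimensions $B(t)=\nabla^\perp(\rho_0\circ\phi_t^{-1})$, so the task reduces to showing that $\|\nabla(\rho_0\circ\phi_{-t})\|_{L^p}$ grows exponentially.

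The first step is the local dynamics. Since $\mathrm{spec}\,Du(\xvec_\ast)=\{\mu,-\mu\}$, there are smooth stable and unstable manifolds $W^s,W^u$ through $\xvec_\ast$. Fix a small neighbourhood $U$ and smooth coordinates in which $W^s=\{x_2=0\}$ and $W^u=\{x_1=0\}$. I would not rely on a smooth linearization, since Sternberg resonances may occur. The points of $U$ whose backward orbit stays in $U$ up to time $t$ form a thin strip around $W^s$ of width $\asymp e^{-\mu t}$ in $x_2$. The backward flow $\phi_{-t}$ maps this strip onto a neighbourhood of a segment of $W^s$ of fixed length, with $D\phi_{-t}$ stretching the $x_2$-direction by $\asymp e^{\mu t}$ and contracting along $W^s$. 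I expect to obtain this from standard cone estimates for $D\phi_{-t}$ along orbits remaining in $U$, with subexponential distortion.

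Next, choose $\rho_0$ supported near a point of $W^s$ inside $U$ with $\partial_{x_2}\rho_0\neq 0$ on a set of positive measure. Set $\rho_t=\rho_0\circ\phi_{-t}$. This function is nontrivial on the set $S_t=\phi_t(\mathrm{supp}\,\rho_0)$, which near $W^u$ is a strip of width $\asymp e^{-\mu t}$ in $x_1$ and length $O(1)$ in $x_2$. Across that strip, $\rho_t$ varies by $O(1)$, so $|\nabla\rho_t|\gtrsim e^{\mu t}$ on a subset of $S_t$ of measure $\asymp e^{-\mu t}$. Here $|S_t|$ is constant in $t$, but its portion near $W^u$ carries the fixed mass. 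Equivalently, one can argue by duality: $\int_{\text{line}}|\partial_{x_1}\rho_t|\,dx_1\ge \mathrm{osc}\,\rho_0>0$ on each transversal segment across the thin strip. Hölder's inequality on a set of measure $\asymp e^{-\mu t}$ then gives
\[
\int|\nabla\rho_t|^p\gtrsim e^{\mu t(p-1)},
\]
which is exponential exactly when $p>1$. This also explains why the case $p=1$ is excluded.

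The main obstacle is making this Hölder/duality step rigorous with uniform constants. One has to integrate $|\partial_{x_1}\rho_t|\ge$ oscillation along each horizontal segment in the region where the strip is thin. The key point is that the length of each such segment inside $\mathrm{supp}\,\rho_t$ is $\lesssim e^{-(\mu-\epsilon)t}$, which follows from the contraction along $W^s$ under $\phi_t$. Finally, although $\phi_t(\mathrm{supp}\,\rho_0)$ eventually leaves $U$, the part that stays in $U$ has measure bounded below. This holds because points near $W^s$ take time roughly $\mu^{-1}\log(1/\mathrm{dist})$ to exit, so one can choose the support of $\rho_0$ to straddle $W^s$ and work with the points exiting at time $\approx t$. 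Then $\lambda=\mu(p-1)-\epsilon$.
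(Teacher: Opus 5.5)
Your argument is correct in outline, but it follows a different route from the paper's.

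\textbf{The paper's route.} The paper stays Lagrangian and pointwise. It takes a compact segment $\Gamma$ of the local stable curve and the tube $E_t$ of width $re^{-(\mu+\eps)t}$ around it; points of $E_t$ stay in $U$ up to time $t$ by a Gr\"onwall bound on trajectory separation. It chooses $B_0=\nabla^\perp\psi$ equal to the constant unstable vector $e_u$ on a fixed tube containing every $E_t$. A forward-invariant unstable cone then gives $|D\phi_t e_u|\geq c_1e^{(\mu-\eps)t}$ on $E_t$, and the conclusion follows from $\int|D\phi_tB_0|^p\geq c_1^pe^{p(\mu-\eps)t}|E_t|$.

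\textbf{Your route.} You work instead with the Eulerian scalar $\rho_t=\rho_0\circ\phi_{-t}$. The large gradient comes from the fundamental theorem of calculus along short transversals to $W^u$, followed by H\"older on each transversal.

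\textbf{What each buys.} The paper's argument needs only derivative bounds along single orbits and carries over to 3D with a one-dimensional unstable direction. Its price is a datum aligned with $e_u$. Your argument is tied to 2D through $B=\nabla^\perp\rho$, and it needs a curve-level, inclination-lemma type fact about backward images of transversals. In return it works for every $\rho_0$ that does not vanish identically on $W^s_{\rm loc}$. It also makes the failure at $p=1$ transparent: on each transversal only the $L^1$ mass is bounded below, by an $O(1)$ oscillation.

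\textbf{Fixes needed before writing it up.}
\begin{enumerate}
\item \emph{Stable and unstable are swapped in your first paragraph.} The points whose \emph{forward} orbit stays in $U$ form the strip around $W^s$. The points whose backward orbit stays in $U$ form a strip of width $\asymp e^{-\mu t}$ in $x_1$ around $W^u$. Also, $D\phi_{-t}$ stretches along $W^s$ and contracts $x_2$, not the other way round.
\item \emph{The nondegeneracy condition on $\rho_0$ is wrong, and this matters.} ``$\partial_{x_2}\rho_0\neq0$ on a set of positive measure'' holds for every non-zero compactly supported $\rho_0$. It does not suffice: for $\rho_0=x_2\chi$, which straddles $W^s$ but vanishes on it, $\rho_t$ varies only by $O(e^{-\mu t})$ across the strip. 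What you need is $\rho_0(\yvec_\ast)\neq0$ for some $\yvec_\ast\in W^s_{\rm loc}$, with $\mathrm{supp}\,\rho_0$ compactly contained in $U$. The lower bound on each transversal is then of order $|\rho_0(\yvec_\ast)|$, not $\mathrm{osc}\,\rho_0$.
\item \emph{Apply H\"older on the right set.} Do not use ``the segment inside $\mathrm{supp}\,\rho_t$'': on $\T^2$, returning pieces of $\phi_t(\mathrm{supp}\,\rho_0)$ can also meet it. Use instead the sub-segment $\sigma'$ of the transversal at height $x_2\in J$ whose points have backward orbits in $U$ up to time $t$.
\begin{itemize}
\item In your straightened coordinates, $|x_1(-s)|\geq e^{(\mu-\eps)s}|x_1|$ gives $|\sigma'|\lesssim e^{-(\mu-\eps)t}$.
\item $\phi_{-t}(\sigma')$ is a connected curve within $e^{-(\mu-\eps)t}$ of $W^s$. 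It runs from near $\xvec_\ast$ to $\partial U$, so it passes near $\yvec_\ast$ and through $\{\rho_0=0\}$.
\item Integrating over $x_2$ in a fixed interval $J$ then yields $\|B(t)\|_{L^p}^p\gtrsim e^{(p-1)(\mu-\eps)t}$.
\end{itemize}
\item \emph{What stays bounded below.} It is this fixed $x_2$-extent of the family of transversals, not ``a part of measure bounded below'': the part of $\mathrm{supp}\,\rho_0$ that stays in $U$ up to time $t$ has measure $\asymp e^{-\mu t}$.
\end{enumerate}
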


Lemma \ref{lemma:A1} covers the claim for $1<p<\infty$. The endpoint case $p=\infty$ also holds true but follows from the pointwise estimate.

\begin{proof}
Let $U\subset\T^2$ be small rectangle around the hyperbolic point $\xvec_\ast$, and let us denote by $e_u$ the unstable direction for the hyperbolic point. The contraction principle and the local stable manifold theorem for the ODE give a local stable \(C^1\) curve tangent at \(\xvec_*\) to the one-dimensional stable eigenspace of \(Du(\xvec_*)\), whose forward trajectories converge exponentially to \(\xvec_*\), see e.g.\ \cite{Teschl12}*{Chapter 9.2}. Choose a nontrivial compact segment \(\Gamma\) of this curve whose forward orbit remains in some \(U_0\) compactly contained in $U$. Fix $\eps>0$, let $r>0$ and define the set
\begin{equation}\label{eq:Et}
E_t=\left\{\xvec\in\T^2 :\operatorname{dist}(\xvec,\Gamma) <r \e^{-(\mu+\varepsilon)t}\right\}.
\end{equation}
Then, we claim that there exist $c_0,c_1>0$ such that, on the one hand
\begin{equation}\label{eq:app1}
|E_t|\geq c_0e^{-(\mu+\varepsilon)t} \quad \text{and} \quad \phi_s(E_t)\subset U \quad \text{for all } 0\leq s\leq t,
\end{equation}
and on the other hand,
\begin{equation}\label{eq:app2}
|D\phi_t(\xvec)e_u| \geq c_1e^{(\mu-\varepsilon)t} \quad\text{for every }\xvec\in E_t.
\end{equation}
Once this is settled, choose a fixed compact tube $K\subset\interior U$ that contains $E_t$ for all $t\geq 0$. Take a smooth function $\psi:\T^2\to\R$ supported in $U$ such that
\[
B_0=\nabla^\perp\psi = e_u \quad \text{on } K.
\]
Such a function can be obtained by multiplying an affine function \(\ell\) satisfying \(\nabla^\perp\ell=e_u\) by a cut-off that equals one on \(K\). In particular, $B_0$ is smooth and divergence-free by construction. Therefore, using first $\eqref{eq:app2}$ and after \eqref{eq:app1}, we can write
\[
\int_{\T^2}|D\phi_t(\xvec)B_0(\xvec)|^p\dd \xvec \geq \int_{E_t}|D\phi_t(\xvec)e_u|^p\dd\xvec \geq c_1^p \e^{p(\mu-\eps)t} |E_t| \geq c_0c_1^p \e^{((p-1)\mu - (p+1)\eps)t}.
\]
Given $\mu>0$, for any $p>1$ there exists $\eps>0$ such that
\[
\lambda= (p-1)\mu - (p+1)\eps >0,
\]
and the claim of the lemma follows.

For completeness, we will also give a proof for the estimates \eqref{eq:app1} and \eqref{eq:app2}. After translating \(\xvec_*\in\T^2\) to the origin and making a linear change of coordinates that sends the eigenbasis to $e_u\mapsto e_1=(1,0)$, $e_s\mapsto (0,1)$, we have the matrix
\[
Du(0)=
\begin{pmatrix}
\mu&0\\
0&-\mu
\end{pmatrix}.
\]
Consider the difference between two trajectories $z(s) = \phi_s(\xvec)-\phi_s(\yvec)$. Via the fundamental theorem of calculus, $z$ solves the equation
\[
\dot z(s) = (Du(0) + \overline{R}(s))z(s), \quad z(0) = \xvec-\yvec,
\]
with
\[
\overline{R}(s) = \int_0^1 \left[Du(\phi_s(\yvec) + \theta z(s)) - Du(0)\right] \dd\theta.
\]
Moreover, if the segment joining $\phi_s(\xvec)$ and $\phi_s(\yvec)$ lies in $U$, upon shrinking $U$, there holds that $\|\overline{R}(s)\|<\delta$ for any arbitrarily small $\delta>0$. In such case we can compute
\[
\frac{1}{2}\frac{\dd}{\dd s}|z(s)|^2 = z(s)\cdot Du(0)z(s) + z(s)\cdot\overline{R}(s)z(s) \leq \mu|z(s)|^2 + \delta|z(s)|^2.
\]
For any $\delta<\eps$, via Grönwall we get
\begin{equation}\label{eq:app3}
|\phi_s(\xvec)-\phi_s(\yvec)| \leq \e^{(\mu+\eps)s}|\xvec-\yvec|.
\end{equation}
Choose a compact segment $\Gamma$ of the local stable curve such that $\phi_s(\Gamma)\subset U_0$ for some open set $U_0\subset\interior U$. Let $d = \dist(\overline{U_0},\partial U)>0$ and choose $r\in (0,d/2)$ in the definition of $E_t$ in \eqref{eq:Et}. Given $\xvec\in E_t$, choose $\yvec\in\Gamma$ such that $|\xvec-\yvec|<r\e^{-(\mu+\eps)t}$. 
Suppose now that there exists a first time \(\tau\in[0,t]\) such that
\[
|\phi_\tau(\xvec)-\phi_\tau(\yvec)|=\frac d2.
\]
For every \(0\leq s\leq\tau\), the point \(\phi_s(\yvec)\) belongs to \(U_0\) and the segment joining \(\phi_s(\xvec)\) to \(\phi_s(\yvec)\) lies in \(U\). Hence \eqref{eq:app3} applies on \([0,\tau]\), and
\[
|\phi_s(\xvec)-\phi_s(\yvec)| \leq \e^{(\mu+\varepsilon)s}|\xvec-\yvec| <r\e^{-(\mu+\varepsilon)(t-s)} \leq r<\frac d2
\]
for every \(0\leq s\leq\tau\). Taking \(s=\tau\) contradicts the definition of \(\tau\). Therefore no such exit time exists, and $\phi_s(\xvec)\in U$ for every $0\leq s\leq t$.
Moreover, since $\Gamma$ is a $C^1$ curve, the area of a sufficiently thin tube of width $r\e^{-(\mu+\eps)t}$ around $\Gamma$ is bounded below by the width of the tube times a multiplicative factor. This yields \eqref{eq:app1}.

To prove \eqref{eq:app2} we define the vector $w(s)=D\phi_s(\xvec)e_1$ along an orbit $\phi_s$ contained in $U$. Then $w$ solves the equation
\[
\dot w(s) = (Du(0) + R(s))w, \quad w(0)=e_1,
\]
with $R(s) = Du(\phi_s)-Du(0)$. As before, $\|R(s)\|<\delta$ as long as $\phi_s\in U$. Fix $\kappa<1$ and write $w = (w_1,w_2)$. We claim that the cone $\{w:|w_2|\leq \kappa|w_1|\}$ is forward invariant. Indeed we have
\[
\begin{array}{rcl}
    \dot w_1 & = & (\mu+R_{11})w_1 + R_{12}w_2,  \\
    \dot w_2 & = & R_{21}w_1 + (R_{22}-\mu)w_2.
\end{array}
\]
As long as $w_1\neq 0$ we can define the quotient $q(s) =w_2(s)/w_1(s)$, that satisfies the equation
\[
\dot q = R_{21} + (R_{22}-R_{11}-2\mu)q - R_{12}q^2,
\]
where we recall that $|R_{ij}(s)|<\delta$. At the boundaries of the cone $q\in\{-\kappa,\kappa\}$ we get
\[
\begin{array}{rcll}
    \dot q & \leq & -2\mu\kappa + \delta(1+\kappa)^2 & \text{if } q=\kappa,  \\
    \dot q & \geq & 2\mu\kappa - \delta(1+\kappa)^2 & \text{if } q=-\kappa.
\end{array}
\]
In particular, choosing $\delta>0$ small enough so that $2\mu\kappa > \delta(1+\kappa)^2$ yields that the vector field points inwards at both boundaries, hence by continuity we conclude that the (unstable) cone $\{w:|w_2|\leq \kappa|w_1|\}$ is forward invariant. To make sure that $w_1>0$ we can simply notice that $w_1(0)=1$, so by continuity we apply the argument on the maximal interval on which both $w_1>0$ and $|q|\leq w$. Moreover, inside this cone we have,
\[
\dot w_1 \geq (\mu-|R_{11}|-|R_{12}||q|)w_1 \geq (\mu - \delta(1+\kappa))w_1, \quad w_1(0) = 1.
\]
Another application of Grönwall's inequality gives $w_1(s)\geq \e^{(\mu-\eps)s}$, provided that we choose $\delta>0$ small enough so that $\delta(1+\kappa)\leq \eps$. Upon undoing the previous change of coordinates $(1,0)\mapsto e_u$, $(0,1)\mapsto e_s$, this estimate implies \eqref{eq:app2} and the proof concludes.
\end{proof}

\addtocontents{toc}{\protect\setcounter{tocdepth}{0}}

\section*{Acknowledgement}

The author thanks David Villringer for the enlightening discussions and insightful comments about Section \ref{s:reversed-flow}. The author also thanks Michele Coti Zelati and Massimo Sorella for the very valuable feedback. The author gratefully acknowledges support by the ERC/EPSRC Horizon Europe Guarantee EP/X020886/1.

\addtocontents{toc}{\protect\setcounter{tocdepth}{1}}

\bibliographystyle{abbrv}
\bibliography{dynamo.bib}

\begin{bibdiv}
\begin{biblist}

\bib{ACM19}{article}{
      author={Alberti, Giovanni},
      author={Crippa, Gianluca},
      author={Mazzucato, Anna~L.},
       title={Exponential self-similar mixing by incompressible flows},
        date={2019},
        ISSN={0894-0347,1088-6834},
     journal={J. Amer. Math. Soc.},
      volume={32},
      number={2},
       pages={445\ndash 490},
         url={https://doi.org/10.1090/jams/913},
      review={\MR{3904158}},
}

\bib{ArnoldsProblems}{book}{
      author={Arnold, V.I.},
       title={Arnold's problems},
   publisher={Springer Berlin Heidelberg},
        date={2004},
        ISBN={9783540207481},
         url={https://books.google.co.uk/books?id=vjTnwwLeQZsC},
}

\bib{AK98}{book}{
      author={Arnold, Vladimir~I.},
      author={Khesin, Boris~A.},
       title={Topological methods in hydrodynamics},
     edition={Second},
      series={Applied Mathematical Sciences},
   publisher={Springer, Cham},
        date={[2021] \copyright 2021},
      volume={125},
        ISBN={978-3-030-74277-5; 978-3-030-74278-2},
         url={https://doi.org/10.1007/978-3-030-74278-2},
      review={\MR{4268535}},
}

\bib{Aubin82}{book}{
      author={Aubin, Thierry},
       title={Nonlinear analysis on manifolds. {M}onge-{A}mp\`ere equations},
      series={Grundlehren der mathematischen Wissenschaften [Fundamental
  Principles of Mathematical Sciences]},
   publisher={Springer-Verlag, New York},
        date={1982},
      volume={252},
        ISBN={0-387-90704-1},
         url={https://doi.org/10.1007/978-1-4612-5734-9},
      review={\MR{681859}},
}

\bib{BaxendaleRozovskii93}{article}{
      author={Baxendale, P.~H.},
      author={Rozovskii, B.~L.},
       title={Kinematic dynamo and intermittence in a turbulent flow},
        date={1993},
     journal={Geophysical \& Astrophysical Fluid Dynamics},
      volume={73},
      number={1-4},
       pages={33\ndash 60},
}

\bib{BBPS22}{article}{
      author={Bedrossian, Jacob},
      author={Blumenthal, Alex},
      author={Punshon-Smith, Samuel},
       title={Almost-sure exponential mixing of passive scalars by the
  stochastic {N}avier-{S}tokes equations},
        date={2022},
        ISSN={0091-1798,2168-894X},
     journal={Ann. Probab.},
      volume={50},
      number={1},
       pages={241\ndash 303},
         url={https://doi.org/10.1214/21-aop1533},
      review={\MR{4385127}},
}

\bib{BCZG23}{article}{
      author={Blumenthal, Alex},
      author={Coti~Zelati, Michele},
      author={Gvalani, Rishabh~S.},
       title={Exponential mixing for random dynamical systems and an example of
  {P}ierrehumbert},
        date={2023},
        ISSN={0091-1798,2168-894X},
     journal={Ann. Probab.},
      volume={51},
      number={4},
       pages={1559\ndash 1601},
         url={https://doi.org/10.1214/23-aop1627},
      review={\MR{4597327}},
}

\bib{BrueCotiZelatiMarconi24}{article}{
      author={Bru\`e, Elia},
      author={{Coti Zelati}, Michele},
      author={Marconi, Elio},
       title={Enhanced dissipation for two-dimensional {H}amiltonian flows},
        date={2024},
        ISSN={0003-9527,1432-0673},
     journal={Arch. Ration. Mech. Anal.},
      volume={248},
      number={5},
       pages={Paper No. 84, 37},
         url={https://doi.org/10.1007/s00205-024-02034-3},
      review={\MR{4797689}},
}

\bib{ChildressGilbert}{book}{
      author={Childress, Stephen},
      author={Gilbert, Andrew~D.},
       title={Stretch, twist, fold: The fast dynamo},
     edition={First},
      series={Monographs},
   publisher={Springer Berlin, Heidelberg},
        date={2008},
      volume={37},
        ISBN={978-3-662-14014-7},
         url={https://doi.org/10.1007/978-3-540-44778-8},
}

\bib{CoopermanRowan26}{article}{
      author={Cooperman, William},
      author={Rowan, Keefer},
       title={Exponential scalar mixing for the 2{D} {N}avier-{S}tokes
  equations with degenerate stochastic forcing},
        date={2026},
        ISSN={0020-9910,1432-1297},
     journal={Invent. Math.},
      volume={243},
      number={3},
       pages={863\ndash 959},
         url={https://doi.org/10.1007/s00222-025-01384-3},
      review={\MR{5008155}},
}

\bib{CotiZelatiNavarroFernandez}{article}{
      author={Coti~Zelati, Michele},
      author={Navarro-Fern\'andez, V\'ictor},
       title={Three-dimensional exponential mixing and ideal kinematic dynamo
  with randomized {ABC} flows},
        date={2026},
     journal={J. Dyn. Diff. Equat.},
}

\bib{CZSV25}{unpublished}{
      author={Coti~Zelati, Michele},
      author={Sorella, Massimo},
      author={Villringer, David},
       title={Alpha-unstable flows and the fast dynamo problem},
        date={2025},
         url={https://arxiv.org/abs/2504.00855},
        note={Preprint arXiv:2504.00855},
}

\bib{CZVS26+}{misc}{
      author={Coti~Zelati, Michele},
      author={Sorella, Massimo},
      author={Villringer, David},
       title={A fast dynamo on the three-torus},
        date={2026},
         url={https://arxiv.org/abs/2603.09861},
        note={Preprint arXiv:2603.09861},
}

\bib{Cowling33}{article}{
      author={Cowling, T.~G.},
       title={The magnetic field of sunspots},
        date={1933},
        ISSN={0035-8711},
     journal={Mon. Not. R. Astron. Soc.},
      volume={94},
       pages={39\ndash 48},
}

\bib{Davidson01}{book}{
      author={Davidson, P.~A.},
       title={An introduction to magnetohydrodynamics},
      series={Cambridge Texts in Applied Mathematics},
   publisher={Cambridge University Press, Cambridge},
        date={2001},
        ISBN={0-521-79487-0},
         url={https://doi.org/10.1017/CBO9780511626333},
      review={\MR{1825486}},
}

\bib{DNFLM26+}{misc}{
      author={Del~Nin, Giacomo},
      author={Faraco, Daniel},
      author={Lindberg, Sauli},
      author={Mengual, Francisco},
       title={Turbulent dynamos on bounded domains and their generalization to
  the geometric transport equation},
        date={2026},
         url={https://arxiv.org/abs/2605.20451},
        note={Preprint arXiv:2605.20451},
}

\bib{DemersLiverani08}{article}{
      author={Demers, Mark~F.},
      author={Liverani, Carlangelo},
       title={Stability of statistical properties in two-dimensional piecewise
  hyperbolic maps},
        date={2008},
        ISSN={0002-9947,1088-6850},
     journal={Trans. Amer. Math. Soc.},
      volume={360},
      number={9},
       pages={4777\ndash 4814},
         url={https://doi.org/10.1090/S0002-9947-08-04464-4},
      review={\MR{2403704}},
}

\bib{ELM25}{article}{
      author={Elgindi, Tarek~M.},
      author={Liss, Kyle},
      author={Mattingly, Jonathan~C.},
       title={Optimal enhanced dissipation and mixing for a time-periodic,
  {L}ipschitz velocity field on {$\mathbb{T}^2$}},
        date={2025},
        ISSN={0012-7094,1547-7398},
     journal={Duke Math. J.},
      volume={174},
      number={7},
       pages={1209\ndash 1260},
         url={https://doi.org/10.1215/00127094-2024-0057},
      review={\MR{4912975}},
}

\bib{ElgindiZlatos19}{article}{
      author={Elgindi, Tarek~M.},
      author={Zlato\v{s}, Andrej},
       title={Universal mixers in all dimensions},
        date={2019},
        ISSN={0001-8708,1090-2082},
     journal={Adv. Math.},
      volume={356},
       pages={106807, 33},
         url={https://doi.org/10.1016/j.aim.2019.106807},
      review={\MR{4008523}},
}

\bib{Freedman99}{incollection}{
      author={Freedman, Michael~H.},
       title={Zeldovich's neutron star and the prediction of magnetic froth},
        date={1999},
   booktitle={The {A}rnoldfest ({T}oronto, {ON}, 1997)},
      series={Fields Inst. Commun.},
      volume={24},
   publisher={Amer. Math. Soc., Providence, RI},
       pages={165\ndash 172},
         url={https://doi.org/10.1090/fic/024/11},
      review={\MR{1733574}},
}

\bib{gilbert1988}{article}{
      author={Gilbert, Andrew~D.},
       title={Fast dynamo action in the {P}onomarenko dynamo},
        date={1988},
     journal={Geophysical \& Astrophysical Fluid Dynamics},
      volume={44},
      number={1-4},
       pages={241\ndash 258},
}

\bib{GilbertVanneste21}{article}{
      author={Gilbert, Andrew~D.},
      author={Vanneste, Jacques},
       title={A geometric look at {MHD} and the {B}raginsky dynamo},
        date={2021},
        ISSN={0309-1929,1029-0419},
     journal={Geophys. Astrophys. Fluid Dyn.},
      volume={115},
      number={4},
       pages={436\ndash 471},
         url={https://doi.org/10.1080/03091929.2020.1839896},
      review={\MR{4301417}},
}

\bib{KatokHasselblatt95}{book}{
      author={Katok, Anatole},
      author={Hasselblatt, Boris},
       title={Introduction to the modern theory of dynamical systems},
      series={Encyclopedia of Mathematics and its Applications},
   publisher={Cambridge University Press, Cambridge},
        date={1995},
      volume={54},
        ISBN={0-521-34187-6},
         url={https://doi.org/10.1017/CBO9780511809187},
        note={With a supplementary chapter by Katok and Leonardo Mendoza},
      review={\MR{1326374}},
}

\bib{Larmor19}{article}{
      author={Larmor, J.},
       title={Possible rotational origin of magnetic fields of {S}un and
  {E}arth},
        date={1919},
     journal={Electrical Review},
      volume={85},
      number={512},
}

\bib{LissMattingly26+}{misc}{
      author={Liss, Kyle~L.},
      author={Mattingly, Jonathan~C.},
       title={The {B}atchelor spectrum for a deterministically driven passive
  scalar},
        date={2026},
         url={https://arxiv.org/abs/2603.08904},
        note={Preprint arXiv:2603.08904},
}

\bib{MarsdenHughes94}{book}{
      author={Marsden, Jerrold~E.},
      author={Hughes, Thomas J.~R.},
       title={Mathematical foundations of elasticity},
   publisher={Dover Publications, Inc., New York},
        date={1994},
        ISBN={0-486-67865-2},
        note={Corrected reprint of the 1983 original},
      review={\MR{1262126}},
}

\bib{Moffatt1978}{book}{
      author={Moffatt, H.~K.},
       title={Magnetic field generation in electrically conducting fluids},
   publisher={Cambridge University Press},
        date={1978},
}

\bib{NFSeis26}{article}{
      author={Navarro-Fern\'andez, V\'ictor},
      author={Seis, Christian},
       title={Exponential mixing by random cellular flows},
        date={2026},
        ISSN={0022-1236,1096-0783},
     journal={J. Funct. Anal.},
      volume={290},
      number={2},
       pages={Paper No. 111227, 54},
         url={https://doi.org/10.1016/j.jfa.2025.111227},
      review={\MR{4973604}},
}

\bib{NFVillringer}{misc}{
      author={Navarro-Fern\'andez, V\'ictor},
      author={Villringer, David},
       title={Spectral instability in the smooth {P}onomarenko dynamo},
        date={2025},
        note={Preprint arXiv:2509.19201},
}

\bib{NFV26}{article}{
      author={Navarro-Fern\'andez, V\'ictor},
      author={Villringer, David},
       title={Nonlinear instability for the 3{D} {MHD} equations around the
  {T}aylor-{C}ouette flow},
        date={2026},
        ISSN={0951-7715,1361-6544},
     journal={Nonlinearity},
      volume={39},
      number={5},
       pages={Paper No. 055009, 11},
         url={https://doi.org/10.1088/1361-6544/ae65fc},
      review={\MR{5073797}},
}

\bib{Niebel6+}{misc}{
      author={Niebel, Lukas},
       title={An autonomous {L}ipschitz fast dynamo on the three-torus},
        date={2026},
         url={https://arxiv.org/abs/2608.02586},
        note={Preprint arXiv:2608.02586},
}

\bib{Rowan25}{misc}{
      author={Rowan, Keefer},
       title={A subsequentially fast dynamo on $\mathbb{T}^3$},
        date={2025},
         url={https://arxiv.org/abs/2505.23936},
        note={Preprint arXiv:2505.23936},
}

\bib{SorellaVillringer25+}{misc}{
      author={Sorella, Massimo},
      author={Villringer, David},
       title={A limsup fast dynamo on {$\T^3$}},
        date={2025},
         url={https://arxiv.org/abs/2511.23024},
        note={Preprint arXiv:2511.23024},
}

\bib{Teschl12}{book}{
      author={Teschl, Gerald},
       title={Ordinary differential equations and dynamical systems},
      series={Graduate Studies in Mathematics},
   publisher={American Mathematical Society, Providence, RI},
        date={2012},
      volume={140},
        ISBN={978-0-8218-8328-0},
         url={https://doi.org/10.1090/gsm/140},
      review={\MR{2961944}},
}

\bib{david-thesis}{thesis}{
      author={Villringer, David},
       title={On spectral instability and kinematic dynamo action in
  three-dimensional incompressible flows},
        type={Ph.D. Thesis},
     address={Imperial College London, UK},
        date={2026},
}

\bib{Vishik89}{article}{
      author={Vishik, M.~M.},
       title={Magnetic field generation by the motion of a highly conducting
  fluid},
        date={1989},
        ISSN={0309-1929,1029-0419},
     journal={Geophys. Astrophys. Fluid Dynam.},
      volume={48},
      number={1-3},
       pages={151\ndash 167},
         url={https://doi.org/10.1080/03091928908219531},
      review={\MR{1024693}},
}

\bib{ZRMS1984}{article}{
      author={Zeldovich, Ya.~B.},
      author={Ruzmaikin, A.~A.},
      author={Molchanov, S.~A.},
      author={Sokoloff, D.~D.},
       title={Kinematic dynamo problem in a linear velocity field},
        date={1984},
     journal={Journal of Fluid Mechanics},
      volume={144},
       pages={1\ndash 11},
}

\bib{zeldovich1980magnetic}{article}{
      author={Zeldovich, Yakov~Borisovich},
      author={Ruzmaikin, A.A.},
       title={Magnetic field of a conducting fluid in two-dimensional motion},
        date={1980},
     journal={Zhurnal Eksperimental'noi i Teoreticheskoi Fiziki},
      volume={78},
       pages={980\ndash 986},
}

\end{biblist}
\end{bibdiv}

\end{document}